\newcommand{\dt}{\partial_t}
\newcommand{\dvh}{div_h}
\newcommand{\nablah}{\nabla_h}
\newcommand{\deltah}{\Delta_h}
\DeclareMathOperator{\sg}{sg}
\newcommand{\dz}{\partial_z}
\newcommand{\idx}{\,d\vec{x}}
\newcommand{\idxh}{\,dxdy}
\newcommand{\vech}[1]{\vec{#1}_h}
\newcommand{\subeqref}[2]{$\eqref{#1}_{#2}$}
\newcommand{\abs}[2]{| #1 |^{#2}}
\newcommand{\norm}[2]{\bigl\Arrowvert #1 \bigr\Arrowvert_{#2}}
\newcommand{\normh}[2]{\bigl| #1 \bigr|_{#2}}
\newcommand{\Lnorm}[1]{L^{#1}}
\newcommand{\Hnorm}[1]{H^{#1}}
\newtheorem{thm}{Theorem}[section]
\newtheorem{lm}{Lemma}
\newtheorem{prop}{Proposition}
\numberwithin{equation}{section}
\title{Zero Mach Number Limit of the Compressible Primitive Equations: Ill-prepared Initial Data}
\author{Xin Liu\footnote{Department of Mathematics, Texas A{\&}M University, College Station, TX 77843, USA. Email: stleonliu@gmail.com} \,\, and \,  Edriss S. Titi\footnote{Department of Mathematics, Texas A{\&}M University, College Station,  TX 77840, USA.  Department of Applied Mathematics and Theoretical Physics, University of Cambridge, Cambridge CB3 0WA, UK.
		Department of Computer Science and Applied Mathematics, Weizmann Institute of Science, Rehovot 76100, Israel. Email: {titi@math.tamu.edu}\, and \, {Edriss.Titi@maths.cam.ac.uk}}
}
\date{July 30, 2022}
\begin{document}
\allowdisplaybreaks
\maketitle

\begin{abstract}
	In the work, we consider the zero Mach number limit of compressible primitive equations in the domain $\mathbb R^2 \times 2\mathbb T $ or $\mathbb T^2 \times 2\mathbb T $. We identify the limit equations to be the primitive equations with the incompressible condition. The convergence behaviors are studied in both $\mathbb R^2 \times 2\mathbb T $ and $\mathbb T^2 \times 2\mathbb T $, respectively.
	This paper takes into account the high oscillating acoustic waves and is an extension of our previous work in \cite{LT2018LowMach1}.
	
	\bigskip
	
	{\noindent\bf Mathematics Subject Classification:} 35B25, 35B40, 76N10. \\
	{\noindent\bf Keywords:} Low Mach number limit, Compressible primitive equations, incompressible primitive equations.
\end{abstract}

\tableofcontents

%
%

\section{Introduction}

\subsection{Zero Mach number limit}

As an example of multi-scale analysis,
the zero Mach number limit problem for compressible flows has been an important classical problem in the study of hydrodynamic equations. Pioneered by Klainerman and Majda in \cite{Klainerman1982,Klainerman1981}, it is shown that the solutions of inviscid compressible Euler equations converge to that of inviscid incompressible Euler equations for the isentropic flows in $ \mathbb R^d $ and $ \mathbb T^d $, $ d \in \mathbb Z^+ $. The initial data are well-prepared and almost incompressible. The result is further studied in bounded domains for nonisentropic flows by Schochet in \cite{SchochetCMP1986,Schochet1988}. In \cite{Ukai1986}, the author establishes the zero Mach number limit with general (ill-prepared) initial data in $ \mathbb R^d $. As pointed out in \cite{Metivier2001}, in comparison to the case when the initial data are well-prepared, the time derivatives are no longer uniformly bounded with respect to the Mach number when the system is complemented with ill-prepared initial data. This leads to high frequency acoustic waves with large amplitude.


We have studied the zero Mach number limit of compressible primitive equations with well-prepared initial data in \cite{LT2018LowMach1}. In this work, we are considering such a singular limit problem with ill-prepared initial data. 

To present the general idea of singular limit problems with ill-prepared initial data, consider the following equation of the unknown function $ G $,
\begin{equation}\label{eq:anti-symmetry-system}
\dt G + \dfrac{1}{\varepsilon} L(\partial_x) G  = \mathcal N(G),
\end{equation}
where $ \varepsilon \in (0,\infty) $ is the singular parameter, $ L(\partial_x) $ is an anti-symmetric differential operator with constant coefficients, and $ \mathcal N(G) $ is the nonlinearity of $ G $. Then one can perform the $ H^s $ estimate for some large $s \in \mathbb Z^+ $, on system \eqref{eq:anti-symmetry-system}. Since $ L(\partial_x) $ is anti-symmetric, $ \norm{G}{H^s} $ is uniformly bounded with respect to $ \varepsilon $, at least locally in time.
If one considers the projection of \eqref{eq:anti-symmetry-system} in the kernel of operator $ L(\partial_x) $, which is called the non-singular part of the equation,  it follows that $$ \dt  P_{\ker(L(\partial_x))} G = P_{\ker(L(\partial_x))} \mathcal N(G) $$ is uniformly bounded in $ H^{s'} $ space for some $ s' \in \mathbb Z^+ \cup\lbrace 0 \rbrace $ with respect to $ \varepsilon $. On the other hand, $ G - P_{\ker(L(\partial_x))} G $ satisfies
\begin{equation}\label{eq:oscillation-system}
\begin{aligned}
& \dt (G - P_{\ker(L(\partial_x))} G) + \dfrac{1}{\varepsilon} L(\partial_x) (G - P_{\ker(L(\partial_x))} G) \\
& ~~~~ ~~~~ = \mathcal N(G) - P_{\ker(L(\partial_x))} \mathcal N(G) ,
\end{aligned}
\end{equation}
which has wave packet solutions with fast oscillations as $ \varepsilon \rightarrow 0^+ $. Consequently, $ G - P_{\ker(L(\partial_x))} G $ is oscillating at high frequency.
Moreover, as $ \varepsilon \rightarrow 0^+ $, the $ H^s $ norm preserving property of the anti-symmetric operator $ L(\partial_x) $ implies that the amplitude of the oscillations does not vanish in general.
Instead, the limit of the oscillatory part is driven by certain PDEs.
To resolve the singular limit problem of \eqref{eq:anti-symmetry-system} as $ \varepsilon \rightarrow 0^+ $, one has to study the interactions of the non-singular part and the oscillatory part in the nonlinearity $ \mathcal N(G) $ in order to identify the limit equations. Such a method of studying the singular limit problem of \eqref{eq:anti-symmetry-system} is developed by Schochet in \cite{Schochet1994} for hyperbolic PDEs with applications to the incompressible limit problem of Euler equations, nonlinear wave equations and the theory of weakly nonlinear geometric optics. Later, this method is further developed for some parabolic equations by Gallagher in \cite{Gallagher1998}. We remark here that, if equation \eqref{eq:anti-symmetry-system} is complemented with well-prepared initial data, the amplitude of oscillations is small, and consequently, the interaction between the non-singular part and the oscillatory part is much weaker.

In the study of zero Mach number limit of hydrodynamic equations for isentropic flows, if one writes the equations in the form of \eqref{eq:anti-symmetry-system}, the corresponding kernel of $ L(\partial_x) $ consists the solenoidal velocity field. The equations corresponding to the fast oscillation equations \eqref{eq:oscillation-system} are referred to as the acoustic wave equations. Using these terminologies, the theorem developped by Ukai in \cite{Ukai1986} is basically showing that the acoustic waves decay to zero as $ \varepsilon \rightarrow 0^+ $ in $ \mathbb R^d $. Such a fact is the consequence of the Strichartz estimates for linear wave equations, as pointed out by Desjardins and Grenier in \cite{DesjardinsGrenier1999} (see, e.g., \cite{Ginibre1995,Lindblad1995,Keel1998} about the Strichartz estimate). In $ \mathbb T^d $, Lions and Masmoudi study the resonance of the high frequency osciallations and show that in the sense of distribution, the solutions to the compressible Navier--Stokes equations converge to that to the incompressible Navier--Stokes equations as the Mach number goes to zero in \cite{Lions1998a}. Later in \cite{Masmoudi2001}, Masmoudi studies the incompressible, inviscid limit, with low Mach number and large Reynolds number, of compressible Navier--Stokes equations and identities the equations 
in both $ \mathbb R^d $ and $ \mathbb T^d $. We refer readers, for further developments, to \cite{Danchin2002,Danchin2002per,Danchin2005,Schochet2005Review}.

Additionally, we would like to mention that
when considering the non-isentropic flows, the corresponding operator $ L(\partial_x) $ in \eqref{eq:anti-symmetry-system} has coefficients depending on space and time variables. This causes non-trivial difficulties to study the resonances between the oscillations (see, e.g., \cite{Metivier2001,Metivier2003,Alazard2006,Alazard2005,AlazardReview}). Moreover, when taking into account the stratification effect of gravity, the compressible Navier--Stokes equations with gravity may converge to the Oberbeck-Boussinesq equations or the anelastic equations, depending on the strength of the gravity effect. We refer readers, for more discussions of related topics, to \cite{Feireisl2015a,Feireisl2011,Feireisl2008,Masmoudi2007,Danchin2008,Brandolese2013,Wroblewskakaminska2017,Novotny2011,EFeireisl2012,Rajagopal1996,FeireislSingularLimits}. Also, for more multi-scale analysis, we refer readers to
\cite{Klein2000,Klein2001,RKlein2010,Klein2005,MajdaAtmosphereOcean}.

\subsection{The compressible primitive equations}
As mentioned before, we aim at studying the low Mach number limit of the compressible primitive equations. As discussed in our previous work \cite{LT2018LowMach1}, this is part of the justification of the PE diagram (see Figure 1 in \cite{LT2018LowMach1}). We refer readers, for more background of the compressible primitive equations, to \cite{LT2018LowMach1}.

Let $ \varepsilon \in (0,\infty) $ denote the Mach number, and
let $ \rho_\varepsilon \in \mathbb R, v_\varepsilon \in \mathbb R^2, w_\varepsilon \in \mathbb R $ represent the density, the horizontal and the vertical velocities, respectively. Then the compressible primitive equations can be written as, after rescaling the original CPE, similar to that of the compressible Navier--Stokes equations (see, e.g., \cite{FeireislSingularLimits}):
\begin{equation*}\label{CPE-0}\tag{CPE}
\begin{cases}
	\dt \rho_\varepsilon + \dvh (\rho_\varepsilon v_\varepsilon) + \dz (\rho_\varepsilon w_\varepsilon) = 0  & \text{in} ~ \Omega_h \times (0,1) , \\
	\dt (\rho_\varepsilon v_\varepsilon) + \dvh( \rho_\varepsilon v_\varepsilon\otimes v_\varepsilon) + \partial_z (\rho_\varepsilon w_\varepsilon v_\varepsilon) & \\
	~~~~ ~~~~ + \dfrac{1}{\varepsilon^2}\nablah P(\rho_\varepsilon) = \dvh \mathbb S(v_\varepsilon) + \partial_{zz} v_\varepsilon & \text{in} ~ \Omega_h \times (0,1) , \\
	\partial_z P(\rho_\varepsilon) = 0 & \text{in} ~ \Omega_h \times (0,1).
\end{cases}
\end{equation*}
where $ P(\rho_\varepsilon) = \rho_\varepsilon^\gamma $, $ \mathbb S (v_\varepsilon) = \mu (\nablah v_\varepsilon + \nablah v_\varepsilon^\top) + (\lambda - \mu) \dvh v_\varepsilon \mathbb{I}_2 $ represent the pressure for isentropic flows and the viscous stress tensor for Newtonian flows, respectively. Here, we assume that $ \mu, \lambda > 0 $ and $ \gamma > 1 $.
We consider $ \Omega_h = \mathbb R^2 $ or $ \mathbb T^2 $ in this paper, where $ \mathbb T^2 $ represents the periodic domain with period $ 1 $ in both directions in $ \mathbb R^2 $. \eqref{CPE-0} is complemented with the stress-free and non-permeable boundary conditions:
\begin{equation*}\tag{BC-CPE} \label{bc-cpe}
	\dz v_\varepsilon\big|_{z=0,1} = 0,~ w_\varepsilon\big|_{z=0,1} = 0.
\end{equation*}
Hereafter, we have and will use $ \nabla_h, \dvh $ and $ \Delta_h $ to represent the horizontal gradient, the horizontal divergence, and the horizontal Laplace operator, respectively; that is,
\begin{gather*}
	\nabla_h := \biggl(\begin{array}{c}
		\partial_x \\ \partial_y
	\end{array}\biggr) , ~ \dvh := \nabla_h \cdot, ~
	\Delta_h := \dvh \nabla_h \cdot .
\end{gather*}
Notice, \eqref{CPE-0} with \eqref{bc-cpe} is invariant with respect to the following symmetry:
\begin{equation}\label{SYM}\tag{SYM}
	 v_\varepsilon ~ \text{and} ~ w_\varepsilon ~\text{are even and odd, respectively, in the $ z $-variable}.
\end{equation}
Owing to such symmetry, in order to study the limit system of \eqref{CPE-0} as $ \varepsilon \rightarrow 0^+ $, it suffices to consider the following system:
\begin{equation}\label{CPE}
\begin{cases}
	\dt \rho_\varepsilon + \dvh (\rho_\varepsilon v_\varepsilon) + \dz (\rho_\varepsilon w_\varepsilon) = 0  & \text{in} ~ \Omega_h \times 2 \mathbb T , \\
	\dt (\rho_\varepsilon v_\varepsilon) + \dvh( \rho_\varepsilon v_\varepsilon\otimes v_\varepsilon) + \partial_z (\rho_\varepsilon w_\varepsilon v_\varepsilon) & \\
	~~~~ ~~~~ + \dfrac{1}{\varepsilon^2}\nablah P(\rho_\varepsilon) = \dvh \mathbb S(v_\varepsilon) + \partial_{zz} v_\varepsilon & \text{in} ~ \Omega_h \times 2 \mathbb T , \\
	\partial_z P(\rho_\varepsilon) = 0 & \text{in} ~ \Omega_h \times 2 \mathbb T,
\end{cases}
\end{equation}
subject to the periodic boundary condition and symmetry \eqref{SYM}.
Here $ 2 \mathbb T $ is the periodic domain with period $ 2 $ in $ \mathbb R $.

We remark that the restrictions of solutions to \eqref{CPE} in $ \Omega_h\times [0,1] $ solve \eqref{CPE-0} with \eqref{bc-cpe}, provided that the solutions exist and are regular enough.

We can rewrite \subeqref{CPE}{2} as, provided that $ \rho_\varepsilon > 0 $,
\begin{align*}
	& \dt v_\varepsilon  + v_\varepsilon\cdot \nablah v_\varepsilon + w_\varepsilon\dz v_\varepsilon  + \dfrac{1}{\varepsilon^2} \nablah \bigl( \dfrac{\gamma}{\gamma-1}  \rho_\varepsilon^{\gamma-1}\bigr) = \dfrac{\mu}{\rho_\varepsilon} \deltah v_\varepsilon \\
	& ~~~~ ~~~~ + \dfrac{\lambda}{\rho_\varepsilon} \nablah \dvh v_\varepsilon + \dfrac{1}{\rho_\varepsilon} \partial_{zz} v_\varepsilon.
\end{align*}
Also from the continuity equation \subeqref{CPE}{1}, one can derive
\begin{equation*}
	\dt \rho_\varepsilon^{\gamma-1} + v_\varepsilon\cdot\nablah \rho_\varepsilon^{\gamma-1} + (\gamma -1 ) \rho_\varepsilon^{\gamma-1} \dvh v_\varepsilon + (\gamma-1) \rho_\varepsilon^{\gamma-1} \dz w_\varepsilon = 0.
\end{equation*}
We consider $ \rho_\varepsilon^{\gamma-1} $ with perturbations around the constant state given by $ \rho_\varepsilon^{\gamma-1} = \bar\rho^{\gamma-1} \in (0,\infty) $. Then we define the perturbation variable $ \xi_\varepsilon $ by
$$ \xi_\varepsilon := \dfrac{1}{\varepsilon} \log \biggl( \dfrac{\gamma}{\gamma-1} \dfrac{\rho_\varepsilon^{\gamma-1}}{c^2} \biggr) , $$
with $ c^2 := \dfrac{\gamma}{\gamma-1} \bar\rho^{\gamma-1} $. Let $ \alpha := \dfrac{1}{\gamma-1} $, $ c_1 := \bigl( \dfrac{\gamma}{(\gamma-1)c^2} \bigr)^{\alpha} $. With such notations, $ \rho_\varepsilon = c_1^{-1} \exp{(\varepsilon \alpha \xi_\varepsilon)} $. Without loss of generality,  we take $ \bar \rho \equiv 1 $. Hence \eqref{CPE} can be written as
\begin{equation}\label{eq:perturbation}
	\begin{cases}
		\dt \xi_\varepsilon + v_\varepsilon\cdot\nablah \xi_\varepsilon + \dfrac{\gamma-1}{\varepsilon} ( \dvh v_\varepsilon+ \dz w_\varepsilon ) = 0 &\text{in} ~ \Omega_h \times 2 \mathbb T,\\
		\dt v_\varepsilon + v_\varepsilon \cdot \nablah v_\varepsilon + w_\varepsilon \dz v_\varepsilon + \dfrac{c^2 e^{\varepsilon \xi_\varepsilon}}{\varepsilon} \nablah \xi_\varepsilon \\
		~~~~ = c_1 e^{-\varepsilon \alpha\xi_\varepsilon} \bigl( \mu \deltah v_\varepsilon + \lambda \nablah \dvh v_\varepsilon + \partial_{zz} v_\varepsilon) & \text{in} ~ \Omega_h\times 2\mathbb T, \\
		\dz \xi_\varepsilon = 0 & \text{in} ~ \Omega_h\times 2\mathbb T,
	\end{cases}
\end{equation}
complemented with periodic initial data with symmetry \eqref{SYM}.
In fact, \eqref{eq:perturbation} is already in the form of \eqref{eq:anti-symmetry-system}. In this work, we study the asymptotic limit of \eqref{eq:perturbation} as $ \varepsilon \rightarrow 0^+ $. In fact, we will demonstrate that the existence time of strong solutions to \eqref{eq:perturbation} has a uniform lower bound, independent of $ \varepsilon \in (0,\varepsilon_0) $ for some small $ \varepsilon_0\in (0,1) $. In addition, in the sense of distribution,
the limit equations of \eqref{eq:perturbation}, as $ \varepsilon \rightarrow 0^+ $, are the primitive equations \eqref{eq:primitive-eq} with the incompressible condition, below.
Also, the associated acoustic wave equations for the three dimensional system \eqref{eq:perturbation} are only two dimensional (see \eqref{acoustic-wave}, below). Consequently, we are able to adopt the strategy of studying the acoustic wave equations for the compressible Navier--Stokes equations in $ \mathbb R^2 $ and $ \mathbb T^2 $ to investigate the oscillatory part of the equations. This is done in section \ref{sec:limit-equation}, below.

The rest of this work is organized as follows. In the next section, we will introduce some notations as well as some function spaces. Also, the main theorems in this work are stated in both $ \Omega_h = \mathbb R^2 $ and $ \Omega_h = \mathbb T^2 $. Next, in section \ref{sec:uniform-stablitiy}, we establish the uniform local well-posedness of strong solutions to system \eqref{eq:perturbation}. This is done with uniform  {\it a priori} estimates in section \ref{subsec:apriori-est}, a local existence theorem in section \ref{subsec:local-well-posedness} and a continuity argument in section \ref{subsec:uniform-stability}. In section \ref{sec:limit-equation}, we first identity the primitive equations, i.e., \eqref{eq:primitive-eq}, below, as the limit of system \eqref{eq:perturbation} as $ \varepsilon \rightarrow 0^+ $ in the sense of distribution. Then in section \ref{subsec:convergence-whole}, we argue that the acoustic waves decay to zero in the case when $ \Omega_h = \mathbb R^2 $; in section \ref{subsec:convergence-periodic}, we study the oscillation equations and identify the limit equations of oscillations in the case when $ \Omega_h = \mathbb T^2 $. Consequently, we conclude the compactness in both cases as stated in the main theorems, below.

\section{Preliminaries and main theorems}
In this work, we denote by
\begin{align*}
	\overline f (x,y) : = \int_0^1 f (x,y,z') \,dz', ~ \text{and} ~ \widetilde f(x,y,z) := f(x,y,z) - \overline f(x,y),
\end{align*}
as the average and the fluctuation of any function $ f = f(x,y,z) $ over the $ z $-variable.
We use $ \partial_h $ to denote the horizontal derivatives, i.e., $ \partial_h \in \lbrace \partial_x, \partial_y \rbrace $. $ \dt $ and $ \dz $ denote the time derivative and the vertical derivative, respectively. For any function $ f $, $ \partial_g f $ is sometimes denoted as $ f_g $, for $ g \in \lbrace t, x, y, z, h \rbrace $. Similar notations are also adopted for higher order derivatives.

As in \cite{Lions1992}, we introduce the following function spaces. Denote by
\begin{align*}
	& C_\sigma^\infty : = \lbrace u \in C_0^\infty (\Omega_h \times2\mathbb T ; \mathbb R^2) |  \int_0^1 \dvh u \,dz = 0 \rbrace,\\
	& C_\tau^\infty : = \lbrace u \in C_0^\infty (\Omega_h \times2\mathbb T ; \mathbb R^2) | u = \nablah \psi, ~ \text{for} ~ \psi \in C^\infty(\Omega_h; \mathbb R^2) \rbrace.
\end{align*}
Then with respect to the $ L^2 $-inner product, $ C_\sigma^\infty \perp C_\tau^\infty $ and $ C^\infty_0 = C_\sigma^\infty \cup C_\tau^\infty  $. We denote the closures in the $ L^2 $ norm of $ C_\sigma^\infty , C_\tau^\infty $ as $ L_\sigma^2 = L_\sigma^2(\Omega_h \times2\mathbb T;\mathbb R^2), L_\tau^2 := L_\tau^2(\Omega_h \times2\mathbb T;\mathbb R^2) $, respectively. $ \mathcal D' = \mathcal D_\sigma' \cup \mathcal D_\tau', H^s = H_\sigma^s\cup H_\tau^s $ are the spaces of test functions and Sobolev functions in $ \Omega_h \times 2 \mathbb T $, defined similarly for $ s \in \mathbb Z^+ $. We define the projection operators $ \mathcal P_\sigma, \mathcal P_\tau $ in the following.
Let $ u \in C_0^\infty(\Omega_h\times 2\mathbb T; \mathbb R^2) $. Consider the elliptic problem
\begin{equation}\label{def:projection-poission-equation}
	\begin{gathered}
	\deltah \psi_u = \int_0^1 \dvh u \,dz, \\
	\begin{cases}
	\lim\limits_{\abs{(x,y)}{} \rightarrow \infty} \psi_u = 0 & \text{in the case when} ~ \Omega_h = \mathbb R^2, \\
	\int_{\Omega_h} \psi_u \idxh = 0 & \text{in the case when} ~ \Omega_h = \mathbb T^2.	
	\end{cases}
	\end{gathered}
\end{equation}
Let
\begin{equation}\label{def:projection}
	\mathcal P_\tau u : = \nablah \psi_u, ~~ \mathcal P_\sigma u : = u - \nablah \psi_u.
\end{equation}
Then $ \mathcal P_\sigma, \mathcal P_\tau $ are the projection operators from $ C_0^\infty $ to $ C^\infty_\sigma, C^\infty_\tau $, respectively. Also, by a density argument, $ \mathcal P_\sigma, \mathcal P_\tau $ can act on functions in $ L^2 , H^s $, $ s \in(0,\infty) $.
In particular, the standard elliptic estimates yield, provided that $ u \in H^s(\Omega_h\times 2\mathbb T ;\mathbb R^2) $,  $ s \in (0,\infty) $,
\begin{equation*}
	\normh{\psi_u}{H^{s+1}} \lesssim \normh{\overline u}{H^s}\lesssim \norm{u}{H^s}.
\end{equation*}
Thus $ \mathcal P_\sigma u \in H^s_\sigma, \mathcal P_\tau u \in H^s_\tau $, $ u = \mathcal P_\sigma u + \mathcal P_\tau u $ and
\begin{equation}\label{def:projcetion-boundness}
	\norm{\mathcal P_\sigma u }{H^s} \lesssim \norm{u}{H^s}, ~~\norm{\mathcal P_\tau u }{H^s} \lesssim \norm{u}{H^s}.
\end{equation}

System \eqref{eq:perturbation} is complemented with initial data $ (\xi_0, v_0) \in H^2(\Omega_h\times 2\mathbb T;\mathbb R) \times H^2(\Omega_h\times 2\mathbb T;\mathbb R^2) $ with the compatibility conditions:
\begin{equation}\label{cmpbc}
\dz \xi_0 = 0; ~ \text{$ v_0$ is even in the $ z $-variable.}
\end{equation}
Also, in the case when $ \Omega_h = \mathbb R^2 $, the far field condition
\begin{equation}\label{far-fields}
	\lim\limits_{\abs{(x,y)}{}\rightarrow \infty} \abs{v_\varepsilon}{} = 0,
\end{equation}
is also imposed. We denote the constant $ M \in (0,\infty) $ satisfying
\begin{equation}\label{initial-data}
		\dfrac{1}{2}\norm{v_0}{H^2}^2 + \dfrac{c^2}{\gamma-1} \norm{\xi_0}{H^2}^2 < M,
\end{equation}
to be the bound of initial data $ (\xi_0, v_0) $.
Now we describe our main theorems in this work. The first theorem is stating that the $ H^2 $ norms of the solutions to \eqref{eq:perturbation} are uniformly bounded with respect to $ \varepsilon $, provided that $ \varepsilon $ is small enough.
\begin{thm}\label{thm:uniform-stability} With initial data  $(\xi_0, v_0) \in H^2(\Omega_h\times 2\mathbb T;\mathbb R) \times H^2(\Omega_h\times 2\mathbb T;\mathbb R^2)  $ satisfying the compatibility conditions in \eqref{cmpbc}, let $ M $ be the upper bound of the initial data, i.e., \eqref{initial-data}. Then
	there are positive constants $ \varepsilon_1 \in (0,1), T \in (0,\infty) $ depending only $ M $, such that for any $ \varepsilon \in (0,\varepsilon_1) $, there is a unique strong solution $ (\xi_\varepsilon, v_\varepsilon) $  to system \eqref{eq:perturbation} in the time interval $ [0, T] $.  $ (\xi_\varepsilon, v_\varepsilon) $ satisfies
\begin{equation}\label{thm-regularity}
	\begin{gathered}
		\xi_\varepsilon \in L^\infty(0,T; H^2(\Omega_h\times 2\mathbb T)), ~\dt \xi_\varepsilon \in L^\infty(0,T; H^1(\Omega_h\times 2\mathbb T)),\\
		v_\varepsilon \in L^\infty(0,T; H^2(\Omega_h\times 2\mathbb T)) \cap L^2(0,T; H^3(\Omega_h\times 2\mathbb T)), \\
		\dt v_\varepsilon \in L^\infty(0,T; L^2(\Omega_h\times 2\mathbb T)) \cap L^2(0,T; H^1(\Omega_h\times 2\mathbb T)).
	\end{gathered}
\end{equation}
In addition, there exist positive constants $ M_0, M_1, M_2 \in (0,\infty) $ independent of $ \varepsilon $ such that
\begin{equation}\label{thm-uniform-stability}
	\begin{gathered}
		\norm{\xi_\varepsilon}{L^\infty(0,T;\Hnorm{2}(\Omega_h\times 2\mathbb T))}^2 < M_0, ~ \norm{v_\varepsilon}{L^\infty(0,T;\Hnorm{2}(\Omega_h\times 2\mathbb T))}^2 < M_1,\\
		\norm{\nabla v_\varepsilon}{L^2(0,T;\Hnorm{2}(\Omega_h\times 2\mathbb T))}^2 < M_2.
	\end{gathered}
\end{equation}
\end{thm}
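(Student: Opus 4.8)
\emph{Strategy.} The plan is to follow the three-step scheme announced in the introduction: (i) derive uniform-in-$\varepsilon$ \emph{a priori} estimates in $H^2$ for sufficiently smooth solutions of \eqref{eq:perturbation}; (ii) establish, for each fixed $\varepsilon$, short-time existence of a strong solution; (iii) combine the two through a continuity (bootstrap) argument to propagate the uniform bound on a time interval whose length depends only on $M$. Step (ii) is, for this parabolic--transport system, by now routine: one iterates, decoupling the transport equation for $\xi_\varepsilon$ from the linear parabolic equation for $v_\varepsilon$ (a Galerkin or Friedrichs regularization would do as well), exploiting the smoothing of $c_1 e^{-\varepsilon\alpha\xi_\varepsilon}(\mu\deltah v_\varepsilon + \lambda\nablah\dvh v_\varepsilon + \partial_{zz} v_\varepsilon)$; this yields existence on a possibly $\varepsilon$-dependent interval. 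The essential content is therefore the $\varepsilon$-independent estimate in step (i), together with the argument combining (i) and (ii).

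\emph{The a priori estimate.} I would test the $\xi_\varepsilon$-equation against $\tfrac{c^2}{\gamma-1}\xi_\varepsilon$ and the $v_\varepsilon$-equation against $v_\varepsilon$, then repeat after applying all first- and second-order spatial derivatives (those containing $\dz$ acting only on $v_\varepsilon$, since $\dz\xi_\varepsilon\equiv 0$), respecting the symmetry \eqref{SYM} and the periodic boundary conditions. The decisive point is that the two $O(1/\varepsilon)$ contributions — from $\tfrac{\gamma-1}{\varepsilon}(\dvh v_\varepsilon+\dz w_\varepsilon)$ in the density equation and from $\tfrac{c^2 e^{\varepsilon\xi_\varepsilon}}{\varepsilon}\nablah\xi_\varepsilon$ in the momentum equation — cancel after integration by parts: splitting $e^{\varepsilon\xi_\varepsilon}\nablah\xi_\varepsilon=\nablah\xi_\varepsilon+(e^{\varepsilon\xi_\varepsilon}-1)\nablah\xi_\varepsilon$, the linear piece cancels the $\dvh v_\varepsilon$ term exactly, the $\dz w_\varepsilon$ term integrates to zero against the $z$-independent factor $\xi_\varepsilon$, and every remaining contribution carries a compensating factor (either $(e^{\varepsilon\xi_\varepsilon}-1)/\varepsilon=O(\norm{\xi_\varepsilon}{L^\infty})$ or an explicit $\varepsilon$ absorbing the $1/\varepsilon$), hence is $O(1)$ uniformly in $\varepsilon$ and is absorbed into the viscous dissipation by Young's inequality; the same cancellation, now also producing genuine commutator remainders, persists at each derivative level. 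The coefficients $c_1 e^{\pm\varepsilon\alpha\xi_\varepsilon},\,e^{\varepsilon\xi_\varepsilon}$ are pinched between fixed positive constants, and their spatial derivatives carry a factor $\varepsilon$, once $\varepsilon_1$ is fixed so small that $\varepsilon_1\norm{\xi_\varepsilon}{L^\infty}$ stays small along the bootstrap (using $H^2\hookrightarrow L^\infty$ in two dimensions for the $z$-independent $\xi_\varepsilon$). Integrating the $z$-viscosity by parts, the diffusion then yields full control of $\norm{\nabla v_\varepsilon}{H^2}^2$, and one closes the estimate in the form $\tfrac{d}{dt}E_\varepsilon+c\,\norm{\nabla v_\varepsilon}{H^2}^2\le C\,\mathcal Q(E_\varepsilon)$, with $E_\varepsilon\sim\norm{\xi_\varepsilon}{H^2}^2+\norm{v_\varepsilon}{H^2}^2$, $\mathcal Q$ a fixed polynomial, and $c,C$ independent of $\varepsilon\in(0,\varepsilon_1)$.

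\emph{The main obstacle.} The delicate part is estimating the convective terms at the $H^2$ level without losing derivatives, because the vertical velocity $w_\varepsilon$ is diagnostic: the $\xi_\varepsilon$-equation gives $w_\varepsilon(\cdot,z)=-\int_0^z\dvh\widetilde v_\varepsilon\,dz'+O(\varepsilon)$, so $w_\varepsilon$ effectively costs one horizontal derivative of $v_\varepsilon$. Contributions such as those of $\partial^2(w_\varepsilon\dz v_\varepsilon)$ tested against $\partial^2 v_\varepsilon$ (with $\partial^2$ a generic second-order spatial derivative) — in particular $\int w_\varepsilon\,\partial_{zzz}v_\varepsilon\cdot\partial_{zz}v_\varepsilon$, which one rewrites as $-\tfrac12\int\dz w_\varepsilon\,\abs{\partial_{zz}v_\varepsilon}{2}=\tfrac12\int\dvh\widetilde v_\varepsilon\,\abs{\partial_{zz}v_\varepsilon}{2}+O(\varepsilon)$ — must be handled by integrating by parts in $z$ to trade $\dz w_\varepsilon$ for $\dvh\widetilde v_\varepsilon$, together with anisotropic Hölder and Ladyzhenskaya/Agmon-type interpolation inequalities in the horizontal variables, the surviving top-order factor $\norm{v_\varepsilon}{H^3}$ being absorbed into the dissipation $c\,\norm{\nabla v_\varepsilon}{H^2}^2$. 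This is the familiar primitive-equations difficulty and is where the argument requires the most care, here compounded by having to keep all the singular terms under control simultaneously.

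\emph{Conclusion.} A continuity argument then converts the differential inequality into the statement of the theorem: with $E_\varepsilon(0)\lesssim M$, define $T^\ast$ as the maximal time on which a strong solution exists and satisfies $E_\varepsilon<2E_\varepsilon(0)$; the inequality $\tfrac{d}{dt}E_\varepsilon\le C\,\mathcal Q(E_\varepsilon)$ forces $E_\varepsilon$ to remain below that threshold up to a time $T=T(M)>0$, while step (ii), applied with initial time $T^\ast$ and the (still $H^2$, still compatible) data $(\xi_\varepsilon,v_\varepsilon)(\cdot,T^\ast)$, rules out $T^\ast<T$. Integrating the differential inequality and the dissipation bound over $[0,T]$ furnishes constants $M_0,M_1,M_2$ depending only on $M$ as in \eqref{thm-uniform-stability}; the regularity \eqref{thm-regularity} of $\dt\xi_\varepsilon$ and $\dt v_\varepsilon$ follows, for each fixed $\varepsilon$, directly from the equations and the bounds just obtained; and uniqueness follows from a standard $L^2$ energy estimate on the difference of two solutions.
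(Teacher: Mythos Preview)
Your proposal is correct and follows essentially the same approach as the paper: uniform $H^2$ a priori estimates exploiting the antisymmetry of the singular operator (the paper uses a weighted energy $\frac{c^2}{2(\gamma-1)}\int e^{\varepsilon\xi}|\partial^2\xi|^2$ and substitutes the density equation into the momentum estimate, rather than your splitting $e^{\varepsilon\xi}\nablah\xi=\nablah\xi+(e^{\varepsilon\xi}-1)\nablah\xi$ and separate testing, but the two bookkeepings are equivalent and produce the same commutator remainders), local existence via a fixed-point iteration on a linearized system, and a continuation argument to reach a uniform time $T(M)$. Your identification of the diagnostic vertical velocity $w_\varepsilon=-\int_0^z\dvh\widetilde v_\varepsilon\,dz'+O(\varepsilon)$ as the main technical obstacle, and its resolution by integrating by parts in $z$ to trade $\partial_z w_\varepsilon$ for $\dvh\widetilde v_\varepsilon$ combined with anisotropic H\"older/Sobolev estimates in the horizontal variables, matches the paper's treatment exactly.
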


Next, as $ \varepsilon \rightarrow 0^+ $, formally, we expect the solutions $ v_\varepsilon $ obtained in Theorem \ref{thm:uniform-stability} will converge to a solution to the following primitive equations:
\begin{equation}\label{eq:primitive-eq}
	\begin{cases}
		\dvh v_p + \dz w_p = 0  & \text{in} ~ \Omega_h\times 2 \mathbb T,  \\
		\dt v_p + v_p\cdot \nablah v_p + w_p\dz v_p + \nablah P \\
		~~~~ ~~~~ = c_1 (\mu \deltah v_p + \lambda\nablah \dvh v_p +\partial_{zz}v_p)   & \text{in} ~ \Omega_h\times 2 \mathbb T, \\
		\dz P = 0  & \text{in} ~ \Omega_h\times 2 \mathbb T,
	\end{cases}
\end{equation}
complemented
with initial data $ v_p|_{t=0} = v_{p,0} = \mathcal P_\sigma v_0 \in H^2_\sigma(\Omega_h\times 2\mathbb T) $. The convergence behaviors are different in the case when $ \Omega_h = \mathbb R^2 $ and the case when $ \Omega_h = \mathbb T^2 $. We summarize the results in the following:
\begin{thm}\label{thm:convergence}
	Under the same assumptions as in Theorem \ref{thm:uniform-stability}, as $ \varepsilon \rightarrow 0^+ $, one has
	\begin{equation}\label{thm-weak-convergence}
		\begin{gathered}
			v_\varepsilon \buildrel\ast\over\rightharpoonup v_p ~~~ \text{weak-$\ast$ in} ~ L^\infty(0,T; H^2(\Omega_h\times 2\mathbb T)), \\
			v_\varepsilon \rightharpoonup v_p ~~~ \text{weakly in} ~ L^2(0,T; H^3(\Omega_h\times 2\mathbb T)),
		\end{gathered}
	\end{equation}
	where $ v_p \in L^\infty(0,T;H^2(\Omega_h\times 2\mathbb T)) \cap L^2(0,T; H^3(\Omega_h\times 2\mathbb T)) $ is the unique strong solution to \eqref{eq:primitive-eq} with initial data $ v_p|_{t=0} = v_{p,0} = \mathcal P_\sigma v_0 \in H^2_\sigma(\Omega_h\times 2\mathbb T) $. Moreover, the following strong convergence holds,
	\begin{equation}\label{thm-strong-convergence}
		\mathcal P_\sigma v_\varepsilon \rightarrow v_p ~~~ \text{in} ~ C([0,T];H^1_{\sigma,loc} (\Omega_h\times 2\mathbb T)) \cap L^2(0,T;H^2_{\sigma,loc}(\Omega_h\times 2\mathbb T)).
	\end{equation}
	In addition,
	\begin{itemize}
		\item in the case when $ \Omega_h = \mathbb R^2 $, as $ \varepsilon \rightarrow 0^+ $,
		\begin{equation}\label{aw:strong-convergence-01}
				\xi_\varepsilon \rightarrow 0, ~ \mathcal P_\tau v_\varepsilon \rightarrow 0  ~~~ \text{in} ~ L^2(0,T;W^{\frac 1 2 , 6}(\mathbb R^2)),
			\end{equation}
		and therefore,
		\begin{equation}\label{thm-strong-convergence-whole}
			\norm{v_\varepsilon - v_p}{L^2(0,T;L^6_{loc}(\mathbb R^2\times 2 \mathbb T))} + \norm{\xi_\varepsilon}{L^2(0,T;L^6(\mathbb R^2\times 2 \mathbb T))} \rightarrow 0;
		\end{equation}
		\item in the case when $ \Omega_h = \mathbb T^2 $, as $ \varepsilon \rightarrow 0^+ $, only the weak convergences in \eqref{thm-weak-convergence} hold, and there exists a function $ V^o \in L^\infty(0,T;H^2(\mathbb T^2)) \cap C(0,T;H^1(\mathbb T^2)) $ and a constant $ g^o $, satisfying equation \eqref{aw-per:limit-eq-osc}, below in section \ref{subsec:convergence-periodic}, such that, as $ \varepsilon \rightarrow 0^+ $,
		\begin{equation}\label{thm-strong-convergence-periodic}
				\norm{\biggl( \begin{array}{c}\xi_\varepsilon - g^o\\
				 v_\varepsilon - v_p	
				 \end{array}\biggr)
				 - \mathcal L \bigl(\dfrac t \varepsilon \bigr) V^o }{L^\infty(0,T;H^1(\mathbb T^2\times 2 \mathbb T))} \rightarrow 0
		\end{equation}
		where $ \mathcal L $ is the solution operator to equation \eqref{aw:def-group-operator}, defined in \eqref{aw:def-group-operator-00}, below in section \ref{sec:limit-equation}.
	\end{itemize}
\end{thm}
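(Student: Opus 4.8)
The starting point is the $\varepsilon$‑uniform bounds \eqref{thm-uniform-stability} of Theorem~\ref{thm:uniform-stability}. By the Banach--Alaoglu theorem we extract a subsequence (not relabeled) with $v_\varepsilon \buildrel\ast\over\rightharpoonup v_p$ weak‑$\ast$ in $L^\infty(0,T;H^2)$ and weakly in $L^2(0,T;H^3)$, and $\xi_\varepsilon$ convergent weak‑$\ast$ in $L^\infty(0,T;H^2)$. Multiplying \subeqref{eq:perturbation}{1} by $\varepsilon$ and using the bound on $\dt\xi_\varepsilon$ gives $\dvh v_\varepsilon+\dz w_\varepsilon=O(\varepsilon)$ in $L^\infty(0,T;H^1)$, so the limit obeys the incompressibility constraint \subeqref{eq:primitive-eq}{1}; in particular $\dvh\overline{v_p}=0$, i.e. $v_p=\mathcal P_\sigma v_p$. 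To read off the momentum equation we apply $\mathcal P_\sigma$ to \subeqref{eq:perturbation}{2}: since $\xi_\varepsilon$ is $z$‑independent, $\nablah\xi_\varepsilon\in L^2_\tau$, and writing $e^{\varepsilon\xi_\varepsilon}=1+O(\varepsilon)$, $e^{-\varepsilon\alpha\xi_\varepsilon}=1+O(\varepsilon)$ (legitimate because $H^2\hookrightarrow L^\infty$ in three dimensions), the singular term disappears under $\mathcal P_\sigma$ up to an $O(\varepsilon)$ remainder, while the constant‑coefficient viscous operator commutes with $\mathcal P_\sigma$. The resulting equation for $\mathcal P_\sigma v_\varepsilon$ has right‑hand side (together with $\mathcal P_\sigma$ of the transport terms, $w_\varepsilon$ being recovered from the continuity equation) bounded uniformly in $L^2(0,T;L^2)$, so $\dt\mathcal P_\sigma v_\varepsilon$ is uniformly bounded there; combined with the uniform $L^2(0,T;H^3)$ bound and the Aubin--Lions--Simon lemma, $\mathcal P_\sigma v_\varepsilon$ is precompact in $C([0,T];H^1_{\sigma,loc})\cap L^2(0,T;H^2_{\sigma,loc})$, which is \eqref{thm-strong-convergence}. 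This strong convergence of the solenoidal part lets us pass to the limit in the quadratic terms built from $\mathcal P_\sigma v_\varepsilon$ and, once the $\mathcal P_\tau v_\varepsilon$ contributions are controlled as below, identifies $v_p$ as a solution of \eqref{eq:primitive-eq}; uniqueness of strong solutions of \eqref{eq:primitive-eq} removes the subsequence and yields \eqref{thm-weak-convergence}.

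\textbf{The case $\Omega_h=\mathbb R^2$.} Because $\xi_\varepsilon$ and $\mathcal P_\tau v_\varepsilon=\nablah\psi_{v_\varepsilon}$ are $z$‑independent, averaging \subeqref{eq:perturbation}{1} in $z$ (using $\overline{\dz w_\varepsilon}=0$) and applying $\mathcal P_\tau$ to \subeqref{eq:perturbation}{2} (which annihilates $z$‑averages of $z$‑derivatives, killing the $\partial_{zz}$ and fluctuation parts) produces a \emph{two‑dimensional} damped acoustic wave system $\dt\xi_\varepsilon+\frac{\gamma-1}{\varepsilon}\deltah\psi_{v_\varepsilon}=F_\varepsilon$, $\dt\nablah\psi_{v_\varepsilon}+\frac{c^2}{\varepsilon}\nablah\xi_\varepsilon=c_1(\mu+\lambda)\deltah\nablah\psi_{v_\varepsilon}+G_\varepsilon$ with $F_\varepsilon,G_\varepsilon$ bounded in $L^2(0,T;L^2(\mathbb R^2))$ uniformly in $\varepsilon$. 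Treating the parabolic term as a favorable perturbation and invoking the Strichartz estimates for the $2$D wave equation — the rescaled time $t/\varepsilon$ contributing a positive power of $\varepsilon$ and the endpoint carrying the usual $1/2$‑derivative loss, giving the space $L^2_tW^{1/2,6}$ — yields \eqref{aw:strong-convergence-01}. Then \eqref{thm-strong-convergence-whole} follows: $\mathcal P_\sigma v_\varepsilon-v_p\to 0$ in $L^2(0,T;L^6_{loc})$ by interpolating the local $H^2$ convergence with the uniform $H^3$ bound and $H^2\hookrightarrow L^6$, while $\mathcal P_\tau v_\varepsilon\to 0$ in $L^2(0,T;L^6)$ (and $\xi_\varepsilon\to0$ in $L^2(0,T;L^6)$) by \eqref{aw:strong-convergence-01}, these norms over $\mathbb R^2\times2\mathbb T$ reducing to the $z$‑independent $\mathbb R^2$ norms. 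Since $\mathcal P_\tau v_\varepsilon\to0$ strongly, all mixed and self‑interactions of the oscillatory part in the nonlinearity vanish in $\mathcal D'$, closing the identification of \eqref{eq:primitive-eq}.

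\textbf{The case $\Omega_h=\mathbb T^2$.} Here dispersion is absent and the oscillatory part does not decay, so we use the filtering method of Schochet and Gallagher. Let $\mathcal L(\tau)$ be the energy‑unitary solution group of the linear acoustic system \eqref{aw:def-group-operator}, acting on the component of $(\xi_\varepsilon-\overline{\overline{\xi_\varepsilon}},\,\mathcal P_\tau v_\varepsilon)$ with vanishing horizontal mean (the $\mathbb T^2$‑average $\overline{\overline{\xi_\varepsilon}}$ solves an ODE and converges to a constant $g^o$). Setting $V_\varepsilon:=\mathcal L(t/\varepsilon)^{-1}(\text{oscillatory part})$ removes the singular term; the equation for $V_\varepsilon$ has a bounded right‑hand side plus quadratic expressions oscillating in $t/\varepsilon$ at the acoustic eigenfrequencies, so $V_\varepsilon$ is bounded in $L^\infty(0,T;H^2(\mathbb T^2))$ with $\dt V_\varepsilon$ bounded in a negative‑order space, hence $V_\varepsilon\to V^o$ in $C([0,T];H^1(\mathbb T^2))$ along a subsequence. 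Averaging out the fast oscillations — only the resonant interactions (vanishing combined frequency) survive — shows $V^o$ solves \eqref{aw-per:limit-eq-osc}, and the resonant feedback of the oscillations into the $\mathcal P_\sigma$‑equation is shown to be a horizontal gradient, hence absorbed into the pressure of \eqref{eq:primitive-eq}; thus $v_p$ still solves \eqref{eq:primitive-eq}. Undoing the filtering gives \eqref{thm-strong-convergence-periodic}.

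\textbf{Main obstacle.} The crux is the nonlinear interaction analysis in the periodic case: one must determine the resonance set of the (two‑dimensional, potential‑type) acoustic eigenmodes, prove that their resonant self‑interactions enter the $v_p$‑equation only through a pressure gradient so that \eqref{eq:primitive-eq} is preserved \emph{exactly}, and secure enough $\varepsilon$‑uniform compactness of the filtered variable to justify the averaging. A secondary technical point is making the $\mathbb R^2$ Strichartz argument rigorous in the presence of the parabolic term and the $O(\varepsilon)$ thermodynamic and nonlinear corrections while recovering the stated $W^{1/2,6}$ regularity with a quantitative rate.
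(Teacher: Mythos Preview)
Your proposal is correct and follows essentially the same architecture as the paper: uniform bounds $\Rightarrow$ weak compactness, Aubin--Lions on $\mathcal P_\sigma v_\varepsilon$, Strichartz dispersion in $\mathbb R^2$, Schochet--Gallagher filtering with Fourier resonance analysis in $\mathbb T^2$.

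One organizational difference is worth flagging. You defer the identification of the primitive-equation limit for $v_p$ to the domain-specific analysis (in $\mathbb R^2$ via the strong decay of $\mathcal P_\tau v_\varepsilon$, in $\mathbb T^2$ via the claim that the resonant feedback into the $\mathcal P_\sigma$-equation is a gradient). The paper settles this uniformly, \emph{before} splitting into cases, by the elementary algebraic observation that since $\mathcal P_\tau v_\varepsilon=\nablah\psi_\varepsilon$ is $z$-independent,
\[
\mathcal P_\tau v_\varepsilon\cdot\nablah\mathcal P_\tau v_\varepsilon + w_\varepsilon\,\dz\mathcal P_\tau v_\varepsilon = \tfrac12\nablah|\nablah\psi_\varepsilon|^2,
\]
which is an exact gradient at finite $\varepsilon$ (no resonance argument needed) and is simply absorbed into the pressure; the mixed terms $\mathcal P_\tau v_\varepsilon\cdot\nablah\mathcal P_\sigma v_\varepsilon$ and $w_\varepsilon\,\dz\mathcal P_\sigma v_\varepsilon$ then pass to the limit by (weak)$\times$(strong) pairing. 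This makes Proposition~\ref{prop:convergence-eq-distribution} independent of the later Strichartz/filtering machinery. A second minor point: in the acoustic system the paper does not keep the viscous term on the left as a parabolic perturbation but simply throws it into the source $G_2\in L^2(0,T;H^1)$ and applies Duhamel for the undamped group $\mathcal L$, which is what makes the Strichartz bounds \eqref{aw-wsc:001}--\eqref{aw-wsc:002} directly applicable.
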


Theorem \ref{thm:uniform-stability} is the direct consequence of Proposition \ref{prop:uniform-stability}, below. Theorem \ref{thm:convergence} is the consequence of Propositions \ref{prop:convergence-eq-distribution}, \ref{prop:strong-convergence-whole-space}, \ref{prop:convergence-periodic}, below.


\section{Uniform stability}\label{sec:uniform-stablitiy}
In this section, we will establish the uniform local existence of solutios to \eqref{eq:perturbation} with respect to $ \varepsilon \in (0,\varepsilon_0) $ for some $ \varepsilon_0 \in (0,\infty) $ small enough. This is done via a series of {\it a priori} estimates, a local well-posedness theorem and a continuity argument. To simplify the presentation, in this section, we shorten the notations by dropping the subscript $ \varepsilon $. That is, we denote $ \xi = \xi_\varepsilon, v = v_\varepsilon, w = w_\varepsilon $.

\subsection{\textit{\textbf{A priori}} estimates}\label{subsec:apriori-est}
We first establish some {\it a priori} estimates, which are independent of $ \varepsilon $. Indeed, for $ \varepsilon $ small enough, the {\it a priori} estimates obtained in this subsection allow us to establish a uniform existence time in subsection \ref{subsec:uniform-stability}.
We remind readers system \eqref{eq:perturbation}:
\begin{equation*}\tag{\ref{eq:perturbation}}
	\begin{cases}
		\dt \xi + v\cdot\nablah \xi + \dfrac{\gamma-1}{\varepsilon} ( \dvh v + \dz w ) = 0 & \text{in} ~ \Omega_h\times 2\mathbb T ,\\
		\dt v + v \cdot \nablah v + w \dz v + \dfrac{c^2 e^{\varepsilon \xi}}{\varepsilon} \nablah \xi \\
		~~~~ = c_1 e^{-\varepsilon \alpha\xi} \bigl( \mu \deltah v + \lambda \nablah \dvh v + \partial_{zz} v) & \text{in} ~ \Omega_h\times 2\mathbb T , \\
		\dz \xi = 0 & \text{in} ~ \Omega_h\times 2\mathbb T. 
	\end{cases}
\end{equation*}
We will show the following:
\begin{prop}\label{prop:apriori-estimate}
Let $ (\xi, v) $ be a local strong solution to \eqref{eq:perturbation} in the time interval $ [0, T] $, $ T \in (0,\infty) $, and $ (\xi, v)\bigr|_{t=0} = (\xi_0, v_0) \in H^2(\Omega_h\times 2\mathbb T) $ with the compatibility conditions in \eqref{cmpbc}. Then the following inequality holds: for any $ \delta \in (0,1) $ with corresponding $ C_\delta \simeq \delta^{-1} $,
\begin{equation}\label{ue:spatial-derivative}
	\begin{aligned}
		& \sup_{0\leq t\leq T} \biggl\lbrace \dfrac{1}{2} \norm{v(t)}{H^2}^2 +  \dfrac{c^2}{2(\gamma-1)} e^{-\varepsilon \norm{\xi(t)}{H^2}} \norm{\xi(t)}{H^2}^2 \biggr\rbrace \\
		& ~ + c_1 \int_0^T \biggl( \ e^{-\varepsilon \alpha\norm{\xi(t)}{H^2}} \bigl( \mu \norm{\nablah v(t)}{H^2}^2   + \lambda \norm{\dvh v(t)}{H^2}^2 \\
		& ~+ \norm{\dz v(t)}{H^2}^2  \bigr) \biggr) \,dt \leq \dfrac{1}{2} \norm{v_0}{H^2}^2 +  \dfrac{c^2e^{\varepsilon \norm{\xi(0)}{H^2}}}{2(\gamma-1)}  \norm{\xi_0}{H^2}^2 \\
		& ~ + \delta \int_0^T \norm{\nabla v(t)}{H^2}^2 \,dt +\varepsilon  \int_0^T \biggl( \mathcal H_1 (\norm{\xi(t)}{\Hnorm{2}},\norm{v(t)}{\Hnorm{2}} ) \\
		& ~ \times \norm{\nabla v(t)}{H^2}^2 \biggr) \,dt  + C_\delta \int_0^T \mathcal H_2(\norm{\xi(t)}{\Hnorm{2}},\norm{v(t)}{\Hnorm{2}} ) \,dt,
	\end{aligned}
\end{equation}
where $ \mathcal H_1(\cdot), \mathcal H_2(\cdot) $ are smooth and bounded functions of the arguments. Also, $ \mathcal H_1(0) = 0, \mathcal H_2(0) = 0 $. Moreover, with the same notations, below, we have the inequalities:
\begin{align}
	& \label{ue:2001}
	\norm{\dt e^{\varepsilon\xi}}{\Hnorm{1}} \leq \mathcal H_2(\norm{\xi}{\Hnorm{2}},\norm{v}{\Hnorm{2}}), \\
	& \norm{\dt \mathcal P_\sigma v}{\Lnorm2} \leq \mathcal H_2(\norm{\xi}{\Hnorm{2}},\norm{v}{\Hnorm{2}}),  \label{ue:203} \\
	& \norm{\dt \mathcal P_\sigma v}{H^1} \leq \mathcal H_1(\norm{\xi}{\Hnorm{2}},\norm{v}{\Hnorm{2}}) \norm{\nabla v}{H^2} \nonumber\\
	& ~~~~ + \mathcal H_2(\norm{\xi}{\Hnorm{2}},\norm{v}{\Hnorm{2}}),	 \label{ue:204}\\
	& \norm{\dt \xi}{H^1} \leq \varepsilon^{-1} \mathcal H_2(\norm{\xi}{\Hnorm{2}},\norm{v}{\Hnorm{2}}), {\label{ese:002}}\\
	& \norm{\dt v}{\Lnorm 2} \leq (1 + \varepsilon^{-1}) \mathcal H_2(\norm{\xi}{\Hnorm{2}},\norm{v}{\Hnorm{2}}) , {\label{ese:003}} \\
	& \norm{\dt v}{H^1} \leq \mathcal H_1(\norm{\xi}{\Hnorm{2}},\norm{v}{\Hnorm{2}}) \norm{\nabla v}{H^2} {\nonumber}\\
	& ~~~~ + (1+\varepsilon^{-1}) \mathcal H_2(\norm{\xi}{\Hnorm{2}},\norm{v}{\Hnorm{2}}). {\label{ese:004}}
\end{align}
\end{prop}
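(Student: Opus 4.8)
The plan is to obtain the basic energy identity first, then the $H^1$ and $H^2$ estimates for $v$ and $\xi$, using heavily the antisymmetric structure of the singular terms so that the $1/\varepsilon$ contributions cancel in the leading order. For the basic energy, I would multiply the momentum equation \subeqref{eq:perturbation}{2} by $v$ and integrate over $\Omega_h\times 2\mathbb T$, and simultaneously multiply the $\xi$-equation \subeqref{eq:perturbation}{1} by $\frac{c^2 e^{\varepsilon\xi}}{\gamma-1}\xi$ (or a suitable primitive of it), integrating by parts. The key cancellation is that $\frac{1}{\varepsilon}\int \nablah\xi\cdot v \, c^2 e^{\varepsilon\xi}$ appearing from the momentum equation matches, up to lower-order terms carrying a factor $\varepsilon$ from differentiating $e^{\varepsilon\xi}$, the term $\frac{c^2}{\varepsilon}\int e^{\varepsilon\xi}\xi\,\dvh v$ coming from the $\xi$-equation after integration by parts; these combine with the $\dz w$ term (using $\dz\xi=0$, so $\int e^{\varepsilon\xi}\xi\,\dz w\,dz = 0$ by the boundary conditions and periodicity). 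The dissipation terms $c_1 e^{-\varepsilon\alpha\xi}(\mu\deltah v + \lambda\nablah\dvh v + \partial_{zz}v)$ tested against $v$ yield, after integration by parts, the good terms $-c_1\int e^{-\varepsilon\alpha\xi}(\mu|\nablah v|^2 + \lambda|\dvh v|^2 + |\dz v|^2)$ plus commutator terms where $\nablah(e^{-\varepsilon\alpha\xi})$ hits $v$, which carry an explicit $\varepsilon$ and are absorbed into the $\mathcal H_1\cdot\|\nabla v\|_{H^2}^2$ and $\mathcal H_2$ terms. The convective terms are handled by standard $H^2$ product/commutator estimates (Kato–Ponce) and Young's inequality, producing the $\delta\int_0^T\|\nabla v\|_{H^2}^2$ and $C_\delta\int_0^T\mathcal H_2$ contributions.

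Next I would differentiate the system up to second order in the horizontal variables and in $z$ (note $\dz\xi=0$ kills all $z$-derivatives of $\xi$, which simplifies things considerably — the acoustic part is effectively two-dimensional), apply $\partial^\alpha$ with $|\alpha|\le 2$, test the differentiated momentum equation with $\partial^\alpha v$ and the differentiated $\xi$-equation with $\frac{c^2 e^{\varepsilon\xi}}{\gamma-1}\partial^\alpha\xi$, and sum. The same antisymmetric cancellation persists at each derivative level because $L(\partial_x)$ has constant coefficients: the top-order singular terms cancel exactly, leaving only commutators $[\partial^\alpha, e^{\varepsilon\xi}]\nablah\xi$ etc., each of which either carries an explicit $\varepsilon$ (when a derivative falls on $e^{\varepsilon\xi}$) or is of lower order and bounded by $\mathcal H_2$. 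The weight $e^{-\varepsilon\|\xi\|_{H^2}}$ in front of $\|\xi(t)\|_{H^2}^2$ on the left and $e^{\varepsilon\|\xi(0)\|_{H^2}}$ on the right is precisely what is needed to control $\frac{d}{dt}\int e^{\varepsilon\xi}|\partial^\alpha\xi|^2$ versus $\int e^{\varepsilon\xi}\partial_t(\ldots)$: one uses $\partial_t e^{\varepsilon\xi} = \varepsilon e^{\varepsilon\xi}\partial_t\xi$ and the bound \eqref{ue:2001} on $\|\dt e^{\varepsilon\xi}\|_{H^1}$, which itself follows from the $\xi$-equation by isolating $\dt\xi = -v\cdot\nablah\xi - \frac{\gamma-1}{\varepsilon}(\dvh v + \dz w)$, multiplying by $\varepsilon e^{\varepsilon\xi}$ so the $1/\varepsilon$ disappears, and applying product estimates in $H^1$ together with the $H^2$ bounds on $v$ (and $w$, recovered from $\dz w = -\dvh v$ after integrating in $z$, using the symmetry/boundary conditions).

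The remaining estimates \eqref{ue:203}–\eqref{ese:004} are corollaries obtained by reading off the time derivatives directly from the equations. For \eqref{ese:002}–\eqref{ese:003}, express $\dt\xi$ and $\dt v$ from \eqref{eq:perturbation}: the $1/\varepsilon$ terms force the $\varepsilon^{-1}$ prefactors, and everything else is bounded in $H^1$ (resp.\ $L^2$) by $H^2$ norms of $(\xi,v)$ via product estimates, giving $\mathcal H_2$. For \eqref{ese:004}, $\dt v$ in $H^1$ requires one more horizontal derivative on the dissipation term $c_1 e^{-\varepsilon\alpha\xi}(\mu\deltah v + \cdots)$, which produces $\|\nabla v\|_{H^2}$ in the estimate (hence the $\mathcal H_1\|\nabla v\|_{H^2}$ term), the pressure term still carries $\varepsilon^{-1}$, and the convective terms are $\mathcal H_2$. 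The estimates \eqref{ue:203}–\eqref{ue:204} for $\dt\mathcal P_\sigma v$ are the crucial $\varepsilon$-uniform ones: applying $\mathcal P_\sigma$ to the momentum equation annihilates the gradient pressure term $\nablah(\ldots)$ entirely (since $\mathcal P_\sigma$ kills horizontal gradients, by \eqref{def:projection}), so no $\varepsilon^{-1}$ survives; one then uses the boundedness \eqref{def:projcetion-boundness} of $\mathcal P_\sigma$ on $H^s$ and bounds the projected convective and dissipative terms as before, with the extra horizontal derivative in the $H^1$ estimate again accounting for the $\mathcal H_1\|\nabla v\|_{H^2}$ term.

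I expect the main obstacle to be the careful bookkeeping of the $1/\varepsilon$ cancellation at the top derivative order when the variable-coefficient weights $e^{\varepsilon\xi}$ and $e^{-\varepsilon\alpha\xi}$ are present: one must verify that every term which does \emph{not} cancel either carries an explicit compensating factor $\varepsilon$ (so it contributes to $\varepsilon\int_0^T\mathcal H_1\|\nabla v\|_{H^2}^2$) or is genuinely lower-order (contributing to $C_\delta\int_0^T\mathcal H_2$), and in particular that no $\varepsilon^{-1}$ leaks into the energy inequality \eqref{ue:spatial-derivative}. The secondary technical point is keeping track of the vertical velocity $w$, which is not an independent unknown but is reconstructed via $w(x,y,z) = -\int_0^z \dvh v(x,y,z')\,dz'$; one must check this is consistent with the boundary conditions $w|_{z=0,1}=0$ (equivalently $\int_0^1\dvh v\,dz$ enters, which is why the $\mathcal P_\sigma$ projection is defined through $\int_0^1\dvh u\,dz$) and that $\|w\|_{H^2}\lesssim\|\nabla v\|_{H^2}$ so that all $w$-dependent terms are controlled.
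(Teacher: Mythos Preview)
Your strategy is essentially the paper's: the same antisymmetric cancellation via testing the $\xi$-equation against $\tfrac{c^2 e^{\varepsilon\xi}}{\gamma-1}\partial^\alpha\xi$, the separate (pressure-free) treatment of $\dz$-derivatives thanks to $\dz\xi=0$, and the same mechanism for the time-derivative bounds (in particular, $\mathcal P_\sigma$ annihilating the singular gradient). Two technical corrections are worth flagging.

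First, the representation of $w$ is not $\dz w = -\dvh v$. Averaging \subeqref{eq:perturbation}{1} in $z$ and subtracting gives $\widetilde v\cdot\nablah\xi + \tfrac{\gamma-1}{\varepsilon}(\dvh\widetilde v + \dz w)=0$, hence
\[
w = -\int_0^z\bigl(\varepsilon\alpha\,\widetilde v\cdot\nablah\xi + \dvh\widetilde v\bigr)\,dz'.
\]
The use of $\widetilde v$ (not $v$) is what makes $w|_{z=1}=0$ hold, and the extra $\varepsilon\alpha\,\widetilde v\cdot\nablah\xi$ piece is exactly what routes certain $w$-dependent terms into the $\varepsilon\,\mathcal H_1\|\nabla v\|_{H^2}^2$ bucket rather than $\mathcal H_2$. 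Your claimed bound $\|w\|_{H^2}\lesssim\|\nabla v\|_{H^2}$ is too crude and in fact not how the paper proceeds: because $w$ is nonlocal in $z$, the terms like $w_h\,\dz v$ and $w\,\dz v_h$ are handled by the anisotropic Minkowski trick (take $L^2_h$ first, pull the $\int_0^z$ outside, then apply two-dimensional Sobolev/Gagliardo--Nirenberg in the horizontal slice), not by an isotropic Kato--Ponce product estimate. Once you make these two adjustments, the rest of your outline matches the paper's proof.
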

In order to establish such {\it a priori} estimates, we first represent the vertical velocity $ w $ in terms of $ (\xi, v) $. In fact, after averaging \subeqref{eq:perturbation}{1} in the $ z $-variable, one has
\begin{equation*}
		\dt \xi + \overline{v} \cdot \nablah \xi + \dfrac{\gamma-1}{\varepsilon} \dvh \overline v = 0,
\end{equation*}
{and consequently}, after comparing the above equation with \subeqref{eq:perturbation}{1}, it follows that
\begin{equation*}
\widetilde v \cdot\nablah \xi + \dfrac{\gamma-1}{\varepsilon} ( \dvh \widetilde v + \dz w ) = 0.
\end{equation*}
Then the following representations of the vertical velocity and its derivatives hold (recall that $ \alpha = 1/(\gamma -1 ) $):
\begin{align}
	{\label{def:vertical-velocity}} w & = - \int_0^z \biggl( \varepsilon\alpha \widetilde v \cdot\nablah \xi + \dvh \widetilde v \biggr) \,dz ,\\
	w_z & = - \varepsilon\alpha \widetilde v \cdot\nablah \xi - \dvh \widetilde v , \label{vertical-velocity-dz} \\
	w_h & = - \int_0^z \biggl(\varepsilon\alpha (\widetilde v \cdot\nablah \xi)_h + \dvh \widetilde v_h \biggr) \,dz , \label{vertical-velocity-dh}\\
	w_{zz} & =  - \varepsilon \alpha \widetilde v_z \cdot \nablah \xi - \dvh \widetilde v_z \label{vertical-velocity-dzz},\\
	w_{hz} & = - \varepsilon \alpha \widetilde v_h \cdot \nablah \xi - \varepsilon \alpha \widetilde v \cdot \nablah \xi_h - \dvh \widetilde v_h. \label{vertical-velocity-dhz}
\end{align}
In the following, we separate the proof of Proposition \ref{prop:apriori-estimate} in three parts: estimates on the horizontal derivatives; estimates on the vertical derivatives; and estimates on the time derivatives.

\subsubsection*{Estimates on the horizontal derivatives}
Denote by $ \partial_h \in \lbrace \partial_x,\partial_y \rbrace $. Then after applying $ \partial_h^2 = $ to \eqref{eq:perturbation}, it follows
\begin{equation}\label{eq:horizontal-derivative}
	\begin{cases}
		\dt \xi_{hh} + \dfrac{\gamma-1}\varepsilon ( \dvh v_{hh} + \dz w_{hh} ) = \mathcal G & \text{in} ~ \Omega_h\times 2\mathbb T, \\
		\dt v_{hh} + \dfrac{c^2e^{\varepsilon \xi}}{\varepsilon} \nablah \xi_{hh} - c_1 e^{-\varepsilon \alpha\xi} ( \mu \deltah v_{hh} \\
		~~~~ + \lambda\nablah \dvh v_{hh} + \partial_{zz} v_{hh} ) =  \mathcal F_1 + \mathcal F_2 + \mathcal F_3 + \mathcal F_4 & \text{in} ~ \Omega_h\times 2\mathbb T,
	\end{cases}
\end{equation}
where we have denoted by
\begin{equation*}
	\begin{aligned}
		& \mathcal G : =  - (v\cdot\nablah \xi)_{hh}, \\
		& \mathcal F_1 : = - (v\cdot \nablah v)_{hh} - (w\dz v)_{hh},\\
		& \mathcal F_2 : =   - c^2 e^{\varepsilon \xi}(\xi_{hh} + \varepsilon \xi_{h}^2 ) \nablah \xi - 2 c^2 e^{\varepsilon\xi} \xi_{h}\nablah \xi_{h}, \\
		& \mathcal F_3: = - 2 \varepsilon c_1 \alpha e^{-\varepsilon \alpha \xi} \xi_h (\mu \deltah v_{h} + \lambda \nablah \dvh v_h + \partial_{zz} v_h ),\\
		& \mathcal F_4: = - \varepsilon c_1\alpha e^{-\varepsilon \alpha \xi} (\xi_{hh} - \varepsilon \alpha \xi_h^2 ) (\mu \deltah v + \lambda \nablah \dvh v + \partial_{zz} v ).
	\end{aligned}
\end{equation*}
After taking the $ L^2 $-inner product of \subeqref{eq:horizontal-derivative}{2} with $ v_{hh} $ in $ \Omega_h\times 2\mathbb T $, it holds
\begin{align*}
	& \dfrac{d}{dt} \biggl\lbrace \dfrac 1 2 \int \abs{v_{hh}}{2} \idx \biggr\rbrace + \int \dfrac{c^2 e^{\varepsilon \xi}}{\varepsilon} \nablah \xi_{hh} \cdot v_{hh} \idx + c_1 \int \biggl( e^{-\varepsilon \alpha \xi} \bigl( \mu \abs{\nablah v_{hh}}{2} \\
	& ~~~~ + \lambda \abs{\dvh v_{hh}}{2}
	 + \abs{v_{hhz}}{2} \bigr) \biggr) \idx
	=  \int \mathcal F_1 \cdot v_{hh} \idx + \int \mathcal F_2 \cdot v_{hh} \idx \\
	& ~~~~ +  \int \mathcal F_3 \cdot v_{hh} \idx +  \int \mathcal F_4 \cdot v_{hh} \idx
	 + \varepsilon c_1 \alpha \int \biggl( e^{-\varepsilon \alpha \xi} \bigl( \mu (\nablah \xi \cdot \nablah v_{hh}) \cdot v_{hh}\\
	& ~~~~ + \lambda ( v_{hh} \cdot \nablah \xi) \dvh v_{hh} \bigr) \biggr) \idx  =: I_1 + I_2 + I_3 + I_4 + I_5.
\end{align*}
On the other hand, after applying integration by parts and substituting \subeqref{eq:horizontal-derivative}{1}, one has
\begin{align*}
	& \int \dfrac{c^2 e^{\varepsilon \xi}}{\varepsilon} \nablah \xi_{hh} \cdot v_{hh} \idx = - \int \dfrac{c^2 e^{\varepsilon \xi}}{\varepsilon} \xi_{hh} \dvh v_{hh} \idx \\
	& ~~~~ - \int c^2 e^{\varepsilon \xi} \xi_{hh} ( v_{hh}\cdot \nablah \xi ) \idx
	= \int \dfrac{c^2 e^{\varepsilon\xi}}{\gamma-1} \xi_{hh} \bigl( \dt \xi_{hh} - \mathcal G - \dfrac{\gamma -1}{\varepsilon}\dz w_{hh}  \bigr) \idx \\
	& ~~~~ - \int c^2 e^{\varepsilon \xi} \xi_{hh} ( v_{hh}\cdot \nablah \xi ) \idx
	 = \dfrac{d}{dt} \biggl\lbrace \dfrac{c^2}{2(\gamma-1)} \int e^{\varepsilon\xi}\abs{\xi_{hh}}{2} \idx   \biggr\rbrace \\
	& ~~~~ - \dfrac{c^2}{2(\gamma-1)} \int e^{\varepsilon \xi} \varepsilon \xi_t \abs{\xi_{hh}}{2} \idx - \dfrac{c^2}{\gamma-1} \int e^{\varepsilon\xi} \xi_{hh}\mathcal G \idx \\
	& ~~~~ - c^2 \int e^{\varepsilon \xi} \xi_{hh} (v_{hh}\cdot \nablah \xi) \idx.
\end{align*}
Then after writing
\begin{align*}
	& I_6 : = \dfrac{c^2}{2(\gamma-1)} \int e^{\varepsilon \xi} \varepsilon \xi_t \abs{\xi_{hh}}{2} \idx,
	& I_7 : = \dfrac{c^2}{\gamma-1} \int e^{\varepsilon\xi} \xi_{hh}\mathcal G \idx, \\
	& I_8 : = c^2 \int e^{\varepsilon \xi} \xi_{hh} (v_{hh}\cdot \nablah \xi) \idx,
\end{align*}
one has
\begin{equation}\label{ue:001}
	\begin{aligned}
		& \dfrac{d}{dt} \biggl\lbrace \dfrac 1 2 \norm{v_{hh}}{\Lnorm{2}}^2 + \dfrac{c^2}{2(\gamma-1)} \norm{e^{\varepsilon\xi /2}\xi_{hh}}{\Lnorm{2}}^2 \biggr\rbrace + c_1\bigl( \mu \norm{e^{-\varepsilon\alpha\xi/2} \nablah v_{hh}}{\Lnorm{2}}^2  \\
		& ~~~~ ~~~~ + \lambda \norm{e^{-\varepsilon\alpha\xi/2} \dvh v_{hh}}{\Lnorm{2}}^2 + \norm{e^{-\varepsilon\alpha\xi/2} v_{hhz}}{\Lnorm{2}}^2 \bigr) = \sum_{i=1}^8 I_i.
	\end{aligned}
\end{equation}

Now we will estimate the right-hand side of \eqref{ue:001}.
After applying integration by parts, $ I_1 $ can be written as
\begin{align*}
	& I_1 = 
	\int \biggl(  \dfrac{1}{2} \dvh v \abs{v_{hh}}{2} - ( 2 v_h \cdot\nablah v_h+ v_{hh} \cdot\nablah v) \cdot v_{hh} \biggr) \idx\\
	& ~~~~ + \int \dfrac{1}{2} w_z \abs{v_{hh}}{2} \idx + \int  \biggl( w_h \dz v_h \cdot v_{hh} + w_h \dz v \cdot v_{hhh} \\
	& ~~~~ - 2 w_{h} \dz v_{h} \cdot v_{hh} \biggr) \idx =: I_1' + I_1'' + I_1'''.
\end{align*}
Then it follows, after substituting \eqref{vertical-velocity-dz} and \eqref{vertical-velocity-dh}, that
\begin{align*}
	& I_1' \lesssim \int \abs{\nablah v}{} \abs{ \nablah^2 v}{2} \idx \lesssim \norm{\nablah v}{\Lnorm{2}} \norm{\nablah^2 v}{\Lnorm{3}}\norm{ \nablah^2 v}{\Lnorm{6}} 
	\\
	& ~~ \lesssim \norm{\nablah v}{\Lnorm{2}} \bigl( \norm{\nablah^2 v}{\Lnorm{2}}^{1/2} \norm{\nabla\nablah^2 v}{\Lnorm{2}}^{1/2} + \norm{\nablah^2 v}{\Lnorm{2}}\bigr) \bigl( \norm{\nablah^2 v}{\Lnorm{2}} \\
	& ~~~~  + \norm{\nabla \nablah^2 v}{\Lnorm{2}} \bigr)
	\lesssim \delta \norm{\nabla \nablah^2 v}{\Lnorm{2}}^2 + C_\delta \bigl(1 + \norm{\nablah v}{\Lnorm{2}}^4\bigr) \norm{\nablah^2 v}{\Lnorm{2}}^2, \\
	& I_1'' = - \dfrac{1}{2} \int \bigl( \dfrac{\varepsilon}{\gamma-1} \widetilde v \cdot\nablah \xi + \dvh \widetilde v \bigr) \abs{v_{hh}}{2} \idx = \dfrac{1}{2} \int \biggl( \bigl( \dfrac{\varepsilon}{\gamma-1} \xi \dvh \widetilde v \\
	& ~~~~ - \dvh \widetilde v \bigr) \abs{v_{hh}}{2} \biggr) \idx  + \int \dfrac{\varepsilon}{\gamma-1} \xi ( \widetilde v \cdot\nablah v_{hh}) \cdot v_{hh} \idx\\
	& ~~ \lesssim \varepsilon \int \abs{\xi}{} ( \abs{ \nablah v}{} \abs{\nablah^2 v}{2} + \abs{v}{}\abs{\nablah^2 v}{} \abs{\nablah^3 v}{} ) \idx  + \int \abs{\nablah v}{}\abs{\nablah^2 v}{2} \idx \\
	& ~~ \lesssim \varepsilon \norm{\xi}{\Lnorm{\infty}} \norm{\nablah^2 v}{\Lnorm{3}} \bigl( \norm{\nablah v}{\Lnorm{2}}  \norm{\nablah^2 v}{\Lnorm{6}} + \norm{v}{\Lnorm{6}}\norm{\nablah^3 v}{\Lnorm{2}} \bigr)\\
	& ~~~~ + \norm{\nablah v}{\Lnorm{2}} \norm{\nablah^2 v}{\Lnorm{3}} \norm{\nablah^2 v}{\Lnorm{6}} \lesssim   \norm{v}{\Hnorm{1}} \bigl( 1 + \varepsilon \norm{\xi}{\Hnorm{2}} \bigr) \\
	& ~~~~ \times \bigl( \norm{\nablah^2 v}{\Lnorm{2}}^{1/2} \norm{\nabla \nablah^2 v}{\Lnorm{2}}^{1/2} + \norm{\nablah^2 v}{\Lnorm{2}} \bigr) \bigl( \norm{\nabla \nablah^2 v}{\Lnorm{2}} + \norm{\nablah^2 v}{\Lnorm{2}} \bigr) \\
	& ~~ \lesssim \delta \norm{\nabla \nablah^2 v}{\Lnorm{2}}^2 + C_\delta \bigl( \varepsilon^4 \norm{\xi}{H^2}^4 + 1 \bigr) \bigl( \norm{v}{H^1}^4 + 1 \bigr) \norm{\nablah^2 v}{\Lnorm{2}}^2,\\
	& I_1''' = - \int \biggl\lbrack \int_0^z \biggl( \dfrac{\varepsilon}{\gamma-1} (\widetilde{v} \cdot \nablah \xi )_h + \dvh \widetilde{v}_h \biggr) \,dz \times \biggl( \dz v_h \cdot v_{hh} + \dz v \cdot v_{hhh} \\
	& ~~~~ - 2  \dz v_h \cdot v_{hh} \biggr) \biggr\rbrack \idx \lesssim \int_0^1 \varepsilon \bigl( \normh{v}{\Lnorm{\infty}} \normh{\nablah^2 \xi}{\Lnorm{2}}  + \normh{\nablah v}{\Lnorm4} \normh{\nablah \xi}{\Lnorm4} \bigr) \,dz \\
	& ~~~~ \times \int_0^1 \bigl( \normh{\dz v_h}{\Lnorm4} \normh{v_{hh}}{\Lnorm4} + \normh{\dz v}{\Lnorm\infty} \normh{v_{hhh}}{\Lnorm2} \bigr) \,dz  + \int_0^1 \normh{\nablah^2 v}{\Lnorm3}\,dz \\
	& ~~~~ \times  \int \bigl( \normh{\dz v_h}{\Lnorm3}\normh{v_{hh}}{\Lnorm3} + \normh{\dz v}{\Lnorm6} \normh{v_{hhh}}{\Lnorm2} \bigr) \,dz
	\lesssim \int_0^1 \varepsilon \normh{v}{H^2} \normh{\xi}{H^2} \,dz \\
	& ~~~~ \times \int_0^1 \biggl(  (\normh{\dz v_h}{\Lnorm2}^{1/2} \normh{\nablah \dz v_{h}}{\Lnorm2}^{1/2} + \normh{\dz v_h}{\Lnorm2})  (\normh{v_{hh}}{\Lnorm2}^{1/2} \normh{\nablah v_{hh}}{\Lnorm2}^{1/2} + \normh{v_{hh}}{\Lnorm2})\\
	& ~~~~ + \normh{\dz v}{H^2} \normh{v_{hhh}}{\Lnorm2} \biggr) \,dz + \int_0^1 \bigl( \normh{\nablah^2 v}{\Lnorm{2}}^{2/3} \normh{\nablah^3 v}{\Lnorm{2}}^{1/3} + \normh{\nablah^2 v}{\Lnorm{2}} \bigr) \,dz \\
	& ~~~~ \times  \int_0^1 \biggl( ( \normh{\dz v_h}{\Lnorm2}^{2/3}\normh{\dz v_{hh}}{\Lnorm2}^{1/3} + \normh{\dz v_h}{\Lnorm2} )( \normh{v_{hh}}{\Lnorm2}^{2/3} \normh{\nablah v_{hh}}{\Lnorm2}^{1/3} \\
	& ~~~~ + \normh{v_{hh}}{\Lnorm2} ) + \normh{\dz v}{H^1} \normh{v_{hhh}}{\Lnorm2} \biggr) \,dz  \lesssim \varepsilon \norm{v}{H^2} \norm{\xi}{H^2}\\
	& ~~~~ \times \bigl(\norm{\nabla v_{hh}}{\Lnorm2}^2 + \norm{v}{H^2}^2 \bigr) + \norm{v}{H^2}^{2}\norm{\nabla v_{hh}}{\Lnorm2} + \norm{v}{H^2}^3 \\
	& ~~~~ + \norm{v}{H^2}^{5/3} \norm{\nabla v_{hh}}{\Lnorm2}^{4/3} \lesssim \bigl( \varepsilon \norm{v}{H^2} \norm{\xi}{H^2} + \delta \bigr) \norm{\nabla v_{hh}}{\Lnorm2}^2 \\
	& ~~~~ + \bigl(\varepsilon \norm{v}{H^2}\norm{\xi}{H^2} + \norm{v}{H^2} + \norm{v}{H^2}^3 \bigr) \norm{v}{H^2}^2,
\end{align*}
where we have applied the Minkowski, H\"older, Sobolev embedding, and Young inequalities.
Hence we have
\begin{equation}
	\begin{aligned}
		& I_1 \lesssim \bigl( \varepsilon \norm{v}{H^2} \norm{\xi}{H^2} + \delta \bigr) \norm{\nabla v_{hh}}{2}^2 + \bigl(\varepsilon \norm{v}{H^2}\norm{\xi}{H^2} \\
		& ~~~~ + \varepsilon^4 \norm{v}{H^1}\norm{\xi}{H^2}^4
		 + \varepsilon^4 \norm{v}{H^1}^4\norm{\xi}{H^2}^4  + \norm{v}{H^2}^4\\
		& ~~~~ + 1 \bigr) \norm{v}{H^2}^2.
	\end{aligned}
\end{equation}
Next, to estimate $ I_3, I_4, I_5 $, applying the H\"older, Sobolev embedding, and Young inequalities yields,
\begin{align*}
	& I_3 = - 2 \varepsilon c_1 \alpha \int e^{-\varepsilon \alpha \xi} \xi_h \biggl( ( \mu \deltah v_h + \lambda \nablah \dvh v_h) \cdot v_{hh} - \dz v_h \cdot v_{hhz} \biggr) \idx \\
	& ~~ \lesssim \varepsilon \int_0^1 e^{\varepsilon \alpha \norm{\xi}{\Lnorm\infty}} \normh{\xi_h}{\Lnorm4}  ( \normh{\nablah^2 v_{h}}{\Lnorm2}\normh{v_{hh}}{\Lnorm4} + \normh{\dz v_h}{\Lnorm4} \normh{v_{hhz}}{\Lnorm2} )  \,dz \\
	& ~~ \lesssim \varepsilon \norm{\xi}{H^2}e^{\varepsilon \alpha \norm{\xi}{\Lnorm\infty}} \int_0^1 \normh{\nabla \nablah^2 v}{\Lnorm2} ( \normh{\nabla v_h}{\Lnorm2}^{1/2}  \normh{\nabla v_{hh}}{\Lnorm2}^{1/2}  +  \normh{\nabla v_h}{\Lnorm2}) \,dz \\
	& ~~ \lesssim \delta \norm{\nabla \nablah^2 v}{\Lnorm2}^2 + C_\delta \bigl( \varepsilon^4 \norm{\xi}{H^2}^4 + \varepsilon^2 \norm{\xi}{H^2}^2  \bigr) e^{4 \varepsilon \alpha \norm{\xi}{H^2}} \norm{v}{H^2}^2  ,\\
	& I_4 = - \varepsilon c_1 \alpha \int \biggl( e^{-\varepsilon \alpha\xi} \bigl( \xi_{hh} - \varepsilon \alpha\xi_h^2 \bigr) \cdot \bigl( ( \mu \deltah v + \lambda \nablah \dvh  v ) \cdot v_{hh} \\
	& ~~~~ - \dz v \cdot v_{hhz} \bigr) \biggr) \idx
	 \lesssim \varepsilon \int_0^1 \biggl( e^{\varepsilon \alpha \norm{\xi}{\Lnorm\infty}}( \normh{\xi_{hh}}{\Lnorm2} + \varepsilon \normh{\xi_{h}}{\Lnorm4}^2 )( \normh{\nablah^2 v}{\Lnorm4}^2 \\
	& ~~~~ + \normh{\dz v}{\Lnorm\infty} \normh{v_{hhz}}{\Lnorm2} ) \biggr) \,dz
	 \lesssim \varepsilon  e^{\varepsilon \alpha \norm{\xi}{\Lnorm\infty}}( \norm{\xi}{H^2} + \varepsilon \norm{\xi}{H^2}^2 ) \\
	& ~~~~ \times \int_0^1 \biggl( \normh{\nablah^2 v}{\Lnorm2} \normh{\nablah^3 v}{\Lnorm2} + \normh{\nablah^2 v}{\Lnorm2}^2
	+ \normh{ \nablah^2 \dz v}{\Lnorm2}^2 + \normh{\dz v}{H^1}^2 \biggr) \,dz \\
	& ~~ \lesssim \varepsilon  \bigl( \norm{\xi}{H^2} + \varepsilon \norm{\xi}{H^2}^2 \bigr) e^{\varepsilon \alpha \norm{\xi}{H^2}}
	\bigl( \norm{\nabla \nablah^2 v}{\Lnorm2}^2 + \norm{v}{H^2}^2\bigr), \\
	& I_5 \lesssim \varepsilon \int_0^1 e^{\varepsilon \alpha \norm{\xi}{\Lnorm\infty}}\normh{\nablah \xi}{\Lnorm4} \normh{\nablah v_{hh}}{\Lnorm2} \normh{v_{hh}}{\Lnorm4}\,dz \lesssim \varepsilon e^{\varepsilon \alpha \norm{\xi}{\Lnorm\infty}}\norm{\xi}{H^2} \\
	& ~~~~ \times \int_0^1 \normh{\nablah v_{hh}}{\Lnorm2} \bigl( \normh{v_{hh}}{\Lnorm2}^{1/2} \normh{\nablah v_{hh}}{\Lnorm2}^{1/2}
	 + \normh{v_{hh}}{\Lnorm2} \bigr) \,dz \lesssim \delta \norm{\nabla v_{hh}}{\Lnorm2}^2\\
	& ~~~~ + C_\delta \bigl( \varepsilon^4 \norm{\xi}{H^2}^4 + \varepsilon^2 \norm{\xi}{H^2}^2 \bigr)e^{4\varepsilon \alpha \norm{\xi}{H^2}}\norm{v}{H^2}^2.
\end{align*}
In order to estimate $ I_6,  I_7 $, after substituting \subeqref{eq:perturbation}{1} and applying integration by parts, it holds
\begin{align*}
	& I_6 = \dfrac{c^2 \alpha}{2} \int e^{\varepsilon\xi} \abs{\xi_{hh}}{2} ( - \varepsilon v \cdot \nablah \xi - (\gamma-1) \dvh v  ) \idx , \\
	& I_7 
	= - c^2 \alpha \int e^{\varepsilon \xi} \xi_{hh} ( v_{hh} \cdot\nablah \xi + 2 v_h \cdot \nablah \xi_h ) \idx \\
	& ~~~~ + \dfrac{c^2 \alpha}{2} \int e^{\varepsilon\xi} \abs{\xi_{hh}}{2}(  \dvh v +{ 
	\varepsilon v \cdot \nablah \xi } ) \idx.
\end{align*}
Therefore, one has
\begin{align*}
	& I_6 + I_7 + I_8 = - c^2 \alpha \int e^{\varepsilon \xi} \xi_{hh} ( v_{hh} \cdot\nablah \xi + 2 v_h \cdot \nablah \xi_h ) \idx \\
	& ~~~~ + \dfrac{(2-\gamma)c^2 \alpha}{2} \int e^{\varepsilon\xi} \abs{\xi_{hh}}{2}  \dvh v \idx
	 + c^2 \int e^{\varepsilon \xi}\xi_{hh}(v_{hh} \cdot\nablah \xi) \idx\\
	& ~~ \lesssim e^{\varepsilon\norm{\xi}{\Lnorm\infty}}  \int_0^1 \normh{\xi_{hh}}{\Lnorm2} ( \normh{v_{hh}}{\Lnorm4} \normh{\nablah \xi}{\Lnorm4} + \normh{\nablah v}{\Lnorm\infty} \normh{\nablah \xi_{h}}{\Lnorm2}  ) \idx \\
	& ~~ \lesssim e^{\varepsilon\norm{\xi}{\Lnorm\infty}}  \int_0^1 \normh{\xi}{H^2}^2 \bigl( \normh{\nablah^3 v}{\Lnorm2} + \normh{v}{\Hnorm{2}}\bigr)  \idx  \lesssim \delta \norm{\nablah^3 v}{\Lnorm2}^2 \\
	& ~~~~ + C_\delta e^{2\varepsilon \norm{\xi}{H^2}} \norm{\xi}{H^2}^2 \bigl( \norm{\xi}{H^2}^2 + \norm{v}{H^2} \bigr),
\end{align*}
where we have applied the H\"older, Sobolev embedding and Young inequalities.
Similarly, $ I_2 $ can be estimated as following:
\begin{align*}
	& I_2 \lesssim e^{\varepsilon\norm{\xi}{\Lnorm\infty}} \int_0^1 \bigl( \normh{\nablah \xi_{h}}{\Lnorm2} + \varepsilon \normh{\xi_h}{\Lnorm4}^2 \bigr) \normh{\nablah \xi}{\Lnorm4} \normh{v_{hh}}{\Lnorm4} \,dz \\
	& ~~ \lesssim e^{\varepsilon \norm{\xi}{\Lnorm\infty}} \int_0^1 ( \normh{\xi}{H^2}^2 + \normh{\xi}{H^2}^3)  ( \normh{\nablah v_{hh}}{\Lnorm2}^{1/2} \normh{v_{hh}}{\Lnorm2}^{1/2} + \normh{v_{hh}}{\Lnorm2} ) \,dz\\
	& ~~ \lesssim \delta \norm{\nabla v_{hh}}{\Lnorm2}^2 + C_\delta \bigl( \norm{\xi}{H^2}^{8/3} + \norm{\xi}{H^2}^4 \bigr) e^{4\varepsilon/3 \norm{\xi}{H^2}} \\
	& ~~~~ ~~~~ \times  \norm{v}{H^2} + \bigl( \norm{\xi}{H^2}^2 + \norm{\xi}{H^2}^3 \bigr) e^{\varepsilon \norm{\xi}{H^2}} \norm{v}{H^2}.
\end{align*}
Therefore, \eqref{ue:001} implies, after summing up the estimates above with $ \partial_h \in \lbrace \partial_x, \partial_y \rbrace $,
\begin{equation}\label{ue:002}
	\begin{aligned}
		& \dfrac{d}{dt} \biggl\lbrace \dfrac 1 2 \norm{\nablah^2 v}{\Lnorm2}^2 + \dfrac{c^2}{2(\gamma-1)} \norm{e^{\varepsilon\xi /2}\nablah^2 \xi}{\Lnorm2}^2 \biggr\rbrace \\
		& ~~~~ + c_1\bigl( \mu \norm{e^{-\varepsilon\alpha\xi/2} \nablah^3 v}{\Lnorm2}^2
		 + \lambda \norm{e^{-\varepsilon\alpha\xi/2} \nablah^2 \dvh v}{\Lnorm2}^2 \\
		& ~~~~ + \norm{e^{-\varepsilon\alpha\xi/2}\nablah^2 v_{z}}{\Lnorm2}^2 \bigr) \leq \delta \norm{\nabla \nablah^2 v}{\Lnorm2}^2 \\
		& ~~~~ + \varepsilon \mathcal H_1 ( \norm{\xi}{H^2}, \norm{v}{H^2} ) \norm{\nabla \nablah^2 v}{\Lnorm2}^2 + C_\delta \mathcal H_2 ( \norm{\xi}{H^2}, \norm{v}{H^2} ) ,
	\end{aligned}
\end{equation}
where
\begin{equation}\label{ue:003}
	\mathcal H_1 = \mathcal H_1 ( \norm{\xi}{H^2}, \norm{v}{H^2} ), ~ \mathcal H_2 = \mathcal H_2 ( \norm{\xi}{H^2}, \norm{v}{H^2} ),
\end{equation}
are two regular functions of the arguments and $ \mathcal H_1 (0) = \mathcal H_2 (0) = 0 $. We will adopt the same notations for functions with such properties in this work.

After taking the $ L^2 $-inner  product of \subeqref{eq:perturbation}{2}, and the horizontal derivative of \subeqref{eq:perturbation}{2} with $ v $, $ v_h $, respectively,
repeating similar arguments as above yields the following estimate:
\begin{equation}\label{ue:004}
	\begin{aligned}
		& \dfrac{d}{dt} \biggl\lbrace \dfrac{1}{2}  \norm{v}{\Lnorm2}^2 + \dfrac{1}{2} \norm{\nablah v}{\Lnorm2}^2 + \dfrac{c^2}{2(\gamma-1)}  \norm{e^{\varepsilon\xi/2} \xi}{\Lnorm2}^2\\
		& ~~~~ + \dfrac{c^2}{2(\gamma-1)} \norm{e^{\varepsilon\xi/2} \nablah \xi}{\Lnorm2}^2  \biggr\rbrace  + c_1 \bigl( \mu \norm{e^{-\varepsilon\alpha\xi/2}\nablah v}{\Lnorm2}^2 \\
		& ~~~~ + \mu \norm{e^{-\varepsilon\alpha\xi/2}\nablah^2 v}{\Lnorm2}^2 + \lambda \norm{e^{-\varepsilon\alpha\xi/2} \dvh v}{\Lnorm2}^2 \\
		& ~~~~ + \lambda \norm{e^{-\varepsilon\alpha\xi/2} \nablah \dvh v}{\Lnorm2}^2 + \norm{e^{-\varepsilon\alpha\xi/2} v_z}{\Lnorm{2}}^2 \\
		& ~~~~ + \norm{e^{-\varepsilon\alpha\xi/2} \nablah v_{z}}{\Lnorm2}^2 \bigr) \leq \delta \norm{\nabla v}{H^2}^2  + C_\delta \mathcal H_2 (\norm{\xi}{\Hnorm2},\norm{v}{\Hnorm{2}}).
	\end{aligned}
\end{equation}

\subsubsection*{Estimates on the vertical derivatives}
After applying $ \dz  $ to \subeqref{eq:perturbation}{2}, it holds
\begin{equation}\label{eq:dz-derivative}
	\begin{aligned}
		& \dt v_z - c_1 e^{-\varepsilon \alpha \xi} (\mu \deltah v_z + \lambda \nablah\dvh v_z + \partial_{zz} v_z ) = - v_z \cdot\nablah v \\
		& ~~~~ ~~~~ - v\cdot\nablah v_z - w_z \dz v - w \dz v_{z}.
	\end{aligned}
\end{equation}
Again, applying $ \dz $ to \eqref{eq:dz-derivative} yields,
\begin{equation}\label{eq:dzz-derivative}
	\begin{aligned}
		& \dt v_{zz} - c_1 e^{-\varepsilon \alpha \xi} ( \mu \deltah v_{zz} + \lambda \nablah \dvh v_{zz} + \partial_{zz} v_{zz} )  = - v \cdot \nablah v_{zz} \\
		& ~~~~ - 2 v_z \nablah v_{z}
		 - v_{zz} \cdot \nablah v - w_{zz} \dz v - 2 w_z \dz v_z - w\dz v_{zz}.
	\end{aligned}
\end{equation}
Next, we take the $ L^2 $-inner produce of \eqref{eq:dzz-derivative} with $ v_{zz} $. After applying integration by parts, one has
\begin{equation}\label{ue:101}
	\begin{aligned}
		 & \dfrac{d}{dt} \biggl\lbrace \dfrac{1}{2} \norm{v_{zz}}{\Lnorm2}^2 \biggr\rbrace + c_1 \bigl(\mu \norm{e^{-\varepsilon\alpha\xi/2}\nablah v_{zz}}{\Lnorm2}^2 + \lambda \norm{e^{-\varepsilon\alpha\xi/2}\dvh v_{zz}}{\Lnorm2}^2 \\
		 & ~~~~ + \norm{e^{-\varepsilon\alpha\xi/2}v_{zzz}}{\Lnorm2}^2  \bigr) = I_9 + I_{10} + I_{11}, 
	\end{aligned}
\end{equation}
where
\begin{align*}
	& I_9 : = - \int ( v\cdot\nablah v_{zz} + 2 v_z \nablah v_z + v_{zz} \cdot\nablah v ) \cdot v_{zz} \idx,\\
	& I_{10} : = - \int (w_{zz}\dz v + \dfrac{3}{2} w_z \dz v_z ) \cdot v_{zz} \idx, \\
	& I_{11}  : = c_1 \varepsilon \alpha \int e^{-\varepsilon\alpha\xi} ( \mu ( \nablah \xi \cdot \nablah v_{zz} ) \cdot v_{zz} + \lambda (v_{zz} \cdot \nablah \xi ) \dvh v_{zz} ) \idx.
\end{align*}
After applying the
H\"older, Sobolev embedding and Young inequalities, it holds,
\begin{align*}
	& I_9 \lesssim \bigl( \norm{v}{\Lnorm6} \norm{\nablah v_{zz}}{\Lnorm2} + \norm{v_z}{\Lnorm6}\norm{\nablah v_z}{\Lnorm2} + \norm{v_{zz}}{\Lnorm2} \norm{\nablah v}{\Lnorm6} \bigr) \\
	& ~~~~ \times  \norm{v_{zz}}{\Lnorm3}
	\lesssim \bigl( \norm{v}{H^1} \norm{\nablah v_{zz}}{\Lnorm2} + \norm{v}{H^2}^2 \bigr) \bigl( \norm{v_{zz}}{\Lnorm2}^{1/2}\norm{\nabla v_{zz}}{\Lnorm2}^{1/2}\\
	& ~~~~ + \norm{v_{zz}}{\Lnorm2} \bigr)
	\lesssim \delta \norm{\nabla v_{zz}}{\Lnorm2}^2 + C_\delta \bigl( \norm{v}{H^2}^4 + \norm{v}{H^2}\bigr)  \norm{v}{H^2}^2, \\
	& I_{10} =  \varepsilon \alpha \int \biggl( (\widetilde v_z \cdot \nablah \xi )( \dz v \cdot v_{zz}) + \dfrac{3}{2} ( \widetilde v \cdot \nablah \xi)( \dz v_z \cdot v_{zz}) \biggr) \idx \\
	& ~~~~ + \int \biggl( \dvh \widetilde v_z  ( \dz v \cdot v_{zz} ) + \dfrac{3}{2} \dvh \widetilde v ( \dz v_z \cdot v_{zz}) \biggr) \idx \\
	& ~~ \lesssim \varepsilon \alpha  \bigl( \norm{v_z}{\Lnorm4} \norm{\nablah \xi}{\Lnorm4}\norm{v_z}{\Lnorm3} + \norm{v}{\Lnorm6} \norm{\nablah \xi}{\Lnorm6} \norm{v_{zz}}{\Lnorm2} \bigr) \norm{v_{zz}}{\Lnorm6}\\
	& ~~~~ + \bigl( \norm{\nablah v_{z}}{\Lnorm2} \norm{v_z}{\Lnorm3} + \norm{\nablah v}{\Lnorm3} \norm{v_{zz}}{\Lnorm2}\bigr) \norm{v_{zz}}{\Lnorm6}\\
	& ~~ \lesssim \bigl( \varepsilon\alpha  \norm{\xi}{H^2} + 1 \bigr) \norm{v}{H^2}^2  \bigl( \norm{\nabla v_{zz}}{\Lnorm2} + \norm{v}{H^2} \bigr) \\
	& ~~ \lesssim \delta \norm{\nabla v_{zz}}{\Lnorm2}^2 + C_\delta \bigl(\varepsilon^2\alpha^2 \norm{\xi}{H^2}^2 +1 \bigr)\bigl(\norm{v}{H^2}^2 + \norm{v}{H^2}\bigr)\norm{v}{H^2}^2,\\
	& I_{11} \lesssim \varepsilon \alpha e^{\varepsilon \alpha \norm{\xi}{\Lnorm\infty}} \norm{\nablah \xi}{\Lnorm6} \norm{v_{zz}}{\Lnorm3} \norm{\nablah v_{zz}}{\Lnorm2} \lesssim \varepsilon \alpha e^{\varepsilon \alpha \norm{\xi}{H^2}} \norm{\xi}{H^2} \\
	& ~~~~ \times \bigl( \norm{v_{zz}}{\Lnorm2}^{1/2} \norm{\nabla v_{zz}}{\Lnorm2}^{1/2}  + \norm{v_{zz}}{\Lnorm2} \bigr) \norm{\nablah v_{zz}}{\Lnorm2} \lesssim \delta \norm{\nabla v_{zz}}{\Lnorm2}^2 \\
	& ~~~~ + C_\delta \bigl(\varepsilon^4  \norm{\xi}{H^2}^4 + \varepsilon^2 \norm{\xi}{H^2}^2  \bigr) e^{4 \varepsilon \alpha \norm{\xi}{H^2}}\norm{v}{H^2}^2,
\end{align*}
where we have substituted \eqref{vertical-velocity-dz} and \eqref{vertical-velocity-dzz} into $ I_{10} $.
Therefore, we have
\begin{equation}\label{ue:102}
	\begin{aligned}
		& \dfrac{d}{dt} \biggl\lbrace \dfrac{1}{2} \norm{v_{zz}}{\Lnorm2}^2 \biggr\rbrace + c_1 \bigl(\mu \norm{e^{-\varepsilon\alpha\xi/2}\nablah v_{zz}}{\Lnorm2}^2 + \lambda \norm{e^{-\varepsilon\alpha\xi/2}\dvh v_{zz}}{\Lnorm2}^2 \\
		 & ~~~~ + \norm{e^{-\varepsilon\alpha\xi/2}v_{zzz}}{\Lnorm2}^2  \bigr) \leq \delta \norm{\nabla v_{zz}}{\Lnorm2}^2 +  C_\delta \mathcal H_2( \norm{\xi}{\Hnorm{2}}, \norm{v}{\Hnorm{2}} ).
	\end{aligned}
\end{equation}

Next, we establish the estimate of $ v_{hz} $. Apply $ \partial_h $ to \eqref{eq:dz-derivative}. It follows
\begin{equation}\label{eq:dhz-derivative}
	\begin{aligned}
		& \dz v_{hz} - c_1 e^{-\varepsilon \alpha \xi} ( \mu \deltah v_{hz} + \lambda \nablah \dvh v_{hz} + \partial_{zz} v_{hz} ) = - v_{hz} \cdot \nablah v \\
		& ~~~~ - v_z \cdot \nablah v_h
		 - v_h \cdot \nablah v_z - v\cdot\nablah v_{hz} - w_{hz} \dz v - w_z \dz v_h \\
		& ~~~~ - w \dz v_{hz}  - w_h \dz v_{z} - c_1 \varepsilon \alpha e^{-\varepsilon\alpha \xi} \xi_h ( \mu \deltah v_z + \lambda \nablah \dvh  v_z\\
		& ~~~~ + \partial_{zz} v_z ).
	\end{aligned}
\end{equation}
Take the $ L^2 $-inner product of \eqref{eq:dhz-derivative} with $ v_{hz} $ and apply integration by parts in the resultant equation. It follows,
\begin{equation}\label{ue:103}
	\begin{aligned}
		& \dfrac{d}{dt} \biggl\lbrace \dfrac{1}{2} \norm{v_{hz}}{\Lnorm2}^2 \biggr\rbrace + c_1 ( \mu \norm{e^{-\varepsilon \alpha \xi/2} \nablah v_{hz}}{\Lnorm2}^2 + \lambda \norm{e^{-\varepsilon\alpha\xi/2} \dvh v_{hz}}{\Lnorm2}^2 \\
		& ~~~~ + \norm{e^{-\varepsilon \alpha\xi/2} v_{hzz}}{\Lnorm2}^2) = I_{12} + I_{13} + I_{14} + I_{15} + I_{16},
	\end{aligned}
\end{equation}
where
\begin{equation*}
	\begin{aligned}
		& I_{12} : = - \int ( v_{hz} \cdot \nablah v + v_z \cdot\nablah v_h + v_h \cdot\nablah v_z - \dfrac{1}{2} ( \dvh v ) v_{hz} ) \cdot v_{hz} \idx, \\
		& I_{13} : = - \int ( w_{hz} \dz v + w_z \dz v_{h} - \dfrac{1}{2} ( w_z ) v_{hz} ) \cdot v_{hz} \idx, \\
		& I_{14} : = - \int w_h ( \dz v_z \cdot v_{hz}) \idx, \\
		& I_{15} : = - c_1 \varepsilon \alpha \int e^{-\varepsilon \alpha \xi} \xi_h ( \mu \deltah v_z + \lambda \nablah \dvh v_z + \partial_{zz} v_z ) \cdot v_{hz} \idx,\\
		& I_{16} : = c_1 \varepsilon \alpha \int e^{-\varepsilon\alpha \xi} ( \mu ( \nablah \xi \cdot\nablah v_{hz}) \cdot v_{hz} + \lambda (v_{hz} \cdot \nablah \xi) \dvh v_{hz} )\idx.
	\end{aligned}
\end{equation*}
After substituting \eqref{vertical-velocity-dhz}, \eqref{vertical-velocity-dz} in $ I_{13} $ and \eqref{vertical-velocity-dh} in $ I_{14} $, applying the H\"older,  Sobolev embedding, Minkowski and Young inequalities yields,
\begin{align*}
	& I_{12} \lesssim ( \norm{\nablah v_{z}}{\Lnorm2} \norm{\nablah v}{\Lnorm 3} + \norm{v_z}{\Lnorm 3} \norm{\nablah v_h}{\Lnorm2} ) \norm{v_{hz}}{\Lnorm6} \\
	& ~~ \lesssim \norm{v}{H^2}^2 \bigl( \norm{\nabla v_{hz}}{\Lnorm2} + \norm{v_{hz}}{\Lnorm2} \bigr) \lesssim \delta \norm{\nabla v_{hz} }{\Lnorm2}^2 \\
	& ~~~~ + C_\delta \norm{v}{H^2}^4 + \norm{v}{H^2}^3,\\
	& I_{13} =  \varepsilon \alpha \int \biggl( ( \widetilde v_h \cdot\nablah \xi + \widetilde v \cdot \nablah \xi_h) (\dz v \cdot v_{hz}) + \dfrac{1}{2} ( \widetilde v \cdot\nablah \xi )( v_{hz}  \cdot v_{hz}) \biggr) \idx  \\
	& ~~~~ + \int \biggl( \dvh \widetilde v_h (\dz v \cdot v_{hz}) + \dfrac{1}{2} \dvh \widetilde v ( v_{hz} \cdot v_{hz}) \biggr) \idx \\
	& ~~ \lesssim \varepsilon\alpha \bigl( (\norm{v_h}{\Lnorm 6} \norm{\nablah\xi}{\Lnorm 2}
	+ \norm{v}{\Lnorm6} \norm{\xi_{hh}}{\Lnorm2}) \norm{\dz v}{\Lnorm6} \\
	& ~~~~ + \norm{v}{\Lnorm6} \norm{\nablah \xi}{\Lnorm6} \norm{v_{hz}}{\Lnorm2} \bigr) \norm{v_{hz}}{\Lnorm6}
	 + \bigl( \norm{\nablah v_h}{\Lnorm2} \norm{\dz v}{\Lnorm3} \\
	& ~~~~  + \norm{\nablah v}{\Lnorm3} \norm{v_{hz}}{\Lnorm2} \bigr)\norm{v_{hz}}{\Lnorm6}
	\lesssim \bigl( \varepsilon \alpha \norm{v}{H^2}^2 \norm{\xi}{H^2} + \norm{v}{H^2}^2 \bigr) \\
	& ~~~~ \times \bigl( \norm{\nabla v_{hz}}{\Lnorm2} + \norm{v_{hz}}{\Lnorm2}\bigr)
	\lesssim \delta \norm{\nabla v_{hz}}{\Lnorm2}^2 \\
	& ~~~~ + C_\delta \bigl( \varepsilon^{2} \norm{\xi}{H^2}^2 \norm{v}{H^2}^2 + \norm{v}{H^2}^2 + 1 \bigr) \norm{v}{H^2}^2, \\
	& I_{14} =  \int \biggl\lbrack \int_0^z \biggl( \varepsilon \alpha (\widetilde v_h \cdot\nablah \xi + \widetilde v \cdot \nablah \xi_h) + \dvh \widetilde v_h \biggr) \,dz \times ( \dz v_z \cdot v_{hz}) \biggr\rbrack \idx \\
	& ~~ \lesssim  \int_0^1 \biggl( \varepsilon ( \normh{v_h}{\Lnorm4} \normh{\nablah \xi}{\Lnorm4} + \normh{v}{\Lnorm\infty} \normh{\nablah \xi_{h}}{\Lnorm2} ) + \normh{\nablah v_h}{\Lnorm2} \biggr) \,dz \\
	& ~~~~ \times \int_0^1 \normh{\dz v_z}{\Lnorm4} \normh{v_{hz}}{\Lnorm4} \,dz
	\lesssim \int_0^1 \bigl( \varepsilon \normh{v}{H^2} \normh{\xi}{H^2} + \normh{v}{H^2} \bigr) \,dz \\
	& ~~~~  \times \int_0^1 \biggl( \bigl(\normh{\dz v_z}{\Lnorm2}^{1/2} \normh{\nablah \dz v_z}{\Lnorm2}^{1/2} + \normh{\dz v_z}{\Lnorm2}\bigr) \bigl( \normh{v_{hz}}{\Lnorm2}^{1/2} \normh{\nablah v_{hz}}{\Lnorm2}^{1/2} \\
	& ~~~~ + \normh{v_{hz}}{\Lnorm2} \bigr) \biggr) \,dz \lesssim \delta \norm{\nabla \nablah v_{z}}{\Lnorm2}^2  + C_\delta \bigl( \varepsilon^2 \norm{\xi}{H^2}^2 \norm{v}{H^2}^2 \\
	& ~~~~ + \norm{v}{H^2}^2 + 1 \bigr) \norm{v}{H^2}^2,\\
	& I_{15} \lesssim \varepsilon e^{\varepsilon \alpha \norm{\xi}{\Lnorm\infty}}  \norm{\nabla^3 v}{\Lnorm2} \norm{v_{hz}}{\Lnorm{3}} \norm{\xi_h}{\Lnorm{6}} \lesssim \delta \norm{\nabla^3 v}{\Lnorm2}^2 \\
	& ~~~~ + C_\delta \varepsilon^4 e^{4 \varepsilon \alpha \norm{\xi}{H^2}} \bigl( \norm{\xi}{\Hnorm{2}}^4 + \norm{\xi}{\Hnorm{2}}^2 \bigr) \norm{v}{H^2}^2, \\
	& I_{16} \lesssim \varepsilon e^{\varepsilon\alpha\norm{\xi}{\Lnorm\infty}} \norm{\nablah \xi}{\Lnorm 6} \norm{\nabla v_{hz}}{\Lnorm2} \norm{v_{hz}}{\Lnorm3} \lesssim \varepsilon e^{\varepsilon \alpha \norm{\xi}{H^2}} \norm{\xi}{H^2} \\
	& ~~~~ \norm{\nabla v_{hz}}{\Lnorm2}\bigl( \norm{v_{hz}}{\Lnorm2}^{1/2} \norm{\nabla v_{hz}}{\Lnorm2}^{1/2} + \norm{v_{hz}}{\Lnorm2} \bigr) \lesssim \delta \norm{\nabla v_{hz}}{\Lnorm2}^2\\
	& ~~~~ + C_\delta \bigl( \varepsilon^4 e^{4\varepsilon \alpha \norm{\xi}{H^2}} \norm{\xi}{H^2}^4 + 1 \bigr) \norm{v}{H^2}^2.
\end{align*}
Therefore, summing up the estimates above with $ \partial_h \in \lbrace \partial_x, \partial_y \rbrace $ leads to,
\begin{equation}\label{ue:104}
	\begin{aligned}
		& \dfrac{d}{dt} \biggl\lbrace \dfrac{1}{2} \norm{\nablah v_{z}}{\Lnorm 2}^2 \biggr\rbrace + c_1 \bigl( \mu \norm{e^{-\varepsilon \alpha \xi/2} \nablah^2 v_{z}}{\Lnorm 2}^2 \\
		& ~~~~ + \lambda \norm{e^{-\varepsilon\alpha\xi/2} \nablah \dvh v_{z}}{\Lnorm 2}^2
		 + \norm{e^{-\varepsilon \alpha\xi/2} \nablah v_{zz}}{\Lnorm 2}^2\bigr)\\
		& ~~~~  \leq \delta \norm{\nabla^3 v}{\Lnorm 2}^2 + C_\delta \mathcal H_2(\norm{\xi}{\Hnorm{2}}, \norm{v}{\Hnorm{2}} ).
	\end{aligned}
\end{equation}
Similarly, after taking the $ L^2 $-inner product of \eqref{eq:dz-derivative} with $ v_z $ and performing estimates as above, one has,
\begin{equation}\label{ue:105}
\begin{aligned}
	& \dfrac{d}{dt} \biggl\lbrace \dfrac{1}{2} \norm{v_z}{\Lnorm2}^2 \biggr\rbrace + c_1 \bigl(  \mu \norm{e^{-\varepsilon \alpha \xi/2} \nablah v_{z}}{\Lnorm2}^2 + \lambda \norm{e^{-\varepsilon\alpha\xi/2} \dvh v_{z}}{\Lnorm2}^2 \\
		& ~~~~ + \norm{e^{-\varepsilon \alpha\xi/2} v_{zz}}{\Lnorm2}^2\bigr) \leq  \delta \norm{\nabla^3 v}{\Lnorm{2}}^2 + C_\delta \mathcal H_2(\norm{\xi}{\Hnorm{2}}, \norm{v}{\Hnorm{2}} ).
\end{aligned}
\end{equation}

Consequently, after integrating in the time variable, \eqref{ue:002}, \eqref{ue:004}, \eqref{ue:102}, \eqref{ue:104}, and \eqref{ue:105} yield \eqref{ue:spatial-derivative}.

\subsubsection*{Estimates on the time derivatives}
Now we establish the final pieces of Proposition \ref{prop:apriori-estimate}.
After multiplying \subeqref{eq:perturbation}{1} with $ \varepsilon e^{\varepsilon \xi} $ and averaging the resulting equation in the $ z $-variable, one has
\begin{equation}\label{eq:density}
	\dt e^{\varepsilon \xi} + \varepsilon e^{\varepsilon \xi}\overline v\cdot \nablah  \xi + (\gamma-1) e^{\varepsilon \xi} \dvh \overline v = 0.
\end{equation}
Then applying the triangle inequality and the Sobolev embedding inequality in \eqref{eq:density} leads to
\begin{equation}\label{ue:201}
\begin{aligned}
	& \norm{\dt e^{\varepsilon \xi}}{\Lnorm 2} \lesssim \varepsilon e^{\varepsilon \norm{\xi}{\Lnorm\infty}}\norm{v}{\Lnorm\infty} \norm{\nablah \xi}{\Lnorm2} + e^{\varepsilon \norm{\xi}{\Lnorm\infty}} \norm{\nablah v}{\Lnorm2} \\
	& ~~ \lesssim \varepsilon e^{\varepsilon \norm{\xi}{H^2}} \norm{v}{H^2} \norm{\xi}{H^2} + e^{\varepsilon\norm{\xi}{H^2}} \norm{v}{H^2}\\
	& ~~ \lesssim \mathcal H_2(\norm{\xi}{\Hnorm{2}},\norm{v}{\Hnorm{2}}) .
\end{aligned}
\end{equation}
Meanwhile, we have, after applying $ \partial_h $ to \eqref{eq:density},
\begin{equation}\label{eq:dh-density}
	\begin{aligned}
	& \dt (e^{\varepsilon\xi})_h + \varepsilon^2 e^{\varepsilon \xi} \xi_h \overline v \cdot \nablah \xi + \varepsilon e^{\varepsilon\xi} \overline v_h \cdot \nablah \xi + \varepsilon e^{\varepsilon\xi} \overline v\cdot\nablah \xi_h \\
	& ~~~~ ~~~~ + (\gamma-1) \varepsilon e^{\varepsilon \xi} \xi_h \dvh \overline v + (\gamma - 1)e^{\varepsilon \xi} \dvh \overline v_h = 0.
	\end{aligned}
\end{equation}
Thus it holds,
\begin{equation}\label{ue:202}
	\begin{aligned}
	& \norm{\dt (e^{\varepsilon \xi})_h}{\Lnorm2} \lesssim e^{\varepsilon \norm{\xi}{\Lnorm \infty}} \bigl( \varepsilon^2 \norm{\nablah \xi}{\Lnorm4}^2 \norm{v}{\Lnorm \infty} +\varepsilon \norm{\nablah\xi}{\Lnorm 4} \\
	& ~~~~ \times \norm{ \nablah v}{\Lnorm 4}
	 + \varepsilon \norm{v}{\Lnorm \infty} \norm{\nablah\xi_h}{\Lnorm2}   + \norm{\nablah v_{h}}{\Lnorm 2}  \bigr) \\
	& ~~ \lesssim e^{\varepsilon \norm{\xi}{H^2}} \bigl( \varepsilon^2 \norm{\xi}{H^2}^2 \norm{v}{H^2} + \norm{v}{H^2} \bigr) \lesssim \mathcal H_2(\norm{\xi}{\Hnorm{2}},\norm{v}{\Hnorm{2}}).
	\end{aligned}
\end{equation}
Consequently, \eqref{ue:201} and \eqref{ue:202} imply \eqref{ue:2001}.

On the other hand, after applying the projection operator $ \mathcal P_\sigma $ (defined in \eqref{def:projection}) to \subeqref{eq:perturbation}{2}, we have the following:
\begin{equation}\label{eq:dt-velocity}
\begin{aligned}
	& \dt \mathcal P_\sigma v = - \mathcal P_\sigma (v\cdot\nablah v) - \mathcal P_\sigma (w\dz v) \\
	& ~~~~ + c_1 \mathcal P_\sigma (e^{-\varepsilon\alpha \xi} (\mu \deltah v + \lambda \nablah \dvh v + \partial_{zz} v)).
\end{aligned}
\end{equation}
In order to estimate the $ L^2 $ and $ H^1 $ norms of $ \dt \mathcal P_\sigma v $, we apply the H\"older and Sobolev embedding inequalities as follows:
\begin{equation}\label{ue:206}
\begin{aligned}
	& \norm{v\cdot \nablah v}{\Lnorm2} \lesssim \norm{v}{\Lnorm3} \norm{\nablah v}{\Lnorm6} \lesssim \norm{v}{H^2}^2, \\
	& \norm{e^{-\varepsilon\alpha\xi}(\mu \deltah v + \lambda \nablah \dvh v + \partial_{zz} v)}{\Lnorm2} \lesssim e^{\varepsilon \alpha \norm{\xi}{\Lnorm\infty}}\\
	& ~~~~ ~~~~ \times \norm{\mu \deltah v + \lambda \nablah \dvh v + \partial_{zz} v}{\Lnorm2}  \lesssim e^{\varepsilon\alpha\norm{\xi}{H^2}} \norm{v}{H^2},\\
	&  \norm{\partial (v\cdot \nablah v)}{\Lnorm 2} \lesssim \norm{\partial v\cdot\nablah v}{\Lnorm2} + \norm{v \cdot \nablah \partial v}{\Lnorm 2} \lesssim \norm{v}{H^2}^2,\\
	& \norm{\partial(e^{-\varepsilon\alpha\xi}(\mu \deltah v + \lambda \nablah \dvh v + \partial_{zz} v))}{\Lnorm2} \\
	& ~~~~ \lesssim \varepsilon \norm{e^{-\varepsilon\alpha\xi}\partial \xi (\mu \deltah v + \lambda \nablah \dvh v + \partial_{zz} v)}{\Lnorm2} \\
	& ~~~~ ~~~~ + \norm{e^{-\varepsilon\alpha\xi}\partial(\mu \deltah v + \lambda \nablah \dvh v + \partial_{zz} v)}{\Lnorm2}\\
	& ~~~~ \lesssim e^{\varepsilon\alpha\norm{\xi}{H^2}} \bigl( \norm{\xi}{H^2} + 1 \bigr) \norm{v}{H^3}.
\end{aligned}
\end{equation}
Here $ \partial \in \lbrace \partial_x,\partial_y,\partial_z \rbrace $ denotes the spatial derivatives.
To estimate the $ L^2 $ and $ H^1 $ norms of $ w \dz v $, we first substitute the identities in \eqref{def:vertical-velocity}, \eqref{vertical-velocity-dz}, \eqref{vertical-velocity-dh} to $ w,  \dz w,\partial_h w $, respectively, and write down the following:
\begin{align*}
	& w \dz v = - \int_0^z \bigl( \varepsilon \alpha \widetilde v \cdot \nablah \xi + \dvh \widetilde v \bigr) \,dz \dz v ,\\
	& \partial_h (w \dz v) = \partial_h w \dz v + w \dz v_h \\
	& ~~~~ = - \int_0^z \bigl( \varepsilon \alpha \widetilde v_h \cdot \nablah \xi + \varepsilon \alpha \widetilde v \cdot \nablah \xi_h + \dvh \widetilde v_h \bigr) \,dz \dz v \\
	& ~~~~ ~~~~ - \int_0^z \bigl( \varepsilon \alpha \widetilde v \cdot \nablah \xi + \dvh \widetilde v \bigr) \,dz \dz v_h,\\
	& \partial_z (w\dz v) = \dz w \dz v + w \dz v_z \\
	& ~~~~ = - (\varepsilon \alpha \widetilde v \cdot\nablah \xi + \dvh \widetilde v) \dz v - \int_0^z \bigl( \varepsilon \alpha \widetilde v \cdot \nablah \xi + \dvh \widetilde v \bigr) \,dz \dz v_z.
\end{align*}
Therefore, after applying  the H\"older, Minkowski, and Sobolev embedding inequalities, the following estimates hold:
\begin{equation}\label{ue:205}
\begin{aligned}
	&  \norm{w \dz v}{\Lnorm2}^2 = \int_0^1 \normh{w \dz v}{\Lnorm 2}^2 \,dz \lesssim \int_0^1 \biggl\lbrack  \biggl( \int_0^1 \bigl( \varepsilon \normh{v}{\Lnorm8} \normh{\nablah \xi}{\Lnorm8} \\
	& ~~~~ + \normh{\nablah v}{\Lnorm4} \bigr) \,dz \biggr)^2
	 \times \normh{\dz v}{\Lnorm4}^2 \biggr\rbrack  \,dz \lesssim \int_0^1 \biggl( (\varepsilon^2 \norm{v}{H^1}^2 \norm{\xi}{H^2}^2\\
	& ~~~~ + \norm{v}{H^2}^2 ) \normh{\dz v}{H^1}^2  \biggr) \,dz  \lesssim \bigl(\varepsilon^2 \norm{v}{H^1}^2\norm{\xi}{H^2}^2 + \norm{v}{H^2}^2 \bigr) \norm{v}{H^2}^2 ,\\
	&  \norm{\partial_h(w\dz v)}{\Lnorm2}^2 = \int_0^1 \normh {\partial_h(w \dz v)}{\Lnorm2}^2 \,dz \lesssim \int_0^1 \biggl\lbrack \biggl( \int_0^1 \bigl( \varepsilon \normh{v_h}{\Lnorm4} \normh{\nablah \xi}{\Lnorm4} \\
	& ~~~~ + \varepsilon \normh{v}{\Lnorm \infty} \normh{\nablah \xi_{h}}{\Lnorm2} + \normh{\nablah^2 v}{\Lnorm2} \bigr) \,dz \biggr)^2 \times \normh{\dz v}{\Lnorm\infty}^2 \biggr\rbrack  \,dz \\
	&  ~~~~ + \int_0^1 \biggl\lbrack \biggl( \int_0^1 \bigl( \varepsilon \normh{v}{\Lnorm8} \normh{\nablah \xi}{\Lnorm8} + \normh{\nablah v}{\Lnorm4} \bigr) \,dz \biggr)^2 \times \normh{v_{hz}}{\Lnorm4}^2 \biggr\rbrack \,dz \\
	& ~~ \lesssim \bigl( \varepsilon^2 \norm{v}{H^2}^2 \norm{\xi}{H^2}^2 + \norm{v}{H^2}^2 \bigr) \norm{v}{H^3}^2,\\
	&  \norm{\dz(w\dz v)}{\Lnorm2}^2 \lesssim \norm{\dz w \dz v}{\Lnorm 2}^2 + \int_0^1 \normh{w\dz v_z}{\Lnorm 2}^2 \,dz \\
	& ~~  \lesssim (\varepsilon \norm{v}{\Lnorm6} \norm{\nablah \xi}{\Lnorm 6} + \norm{\nablah v}{\Lnorm 3} )^2 \norm{\dz v}{\Lnorm 6}^2 \\
	& ~~~~  + \int_0^1 \biggl\lbrack \biggl( \int_0^1 \bigl( \varepsilon \normh{v}{\Lnorm 8} \normh{\nablah \xi}{\Lnorm 8} + \normh{\nablah v}{\Lnorm4} \bigr) \,dz \biggr)^2 \times  \normh{v_{zz}}{\Lnorm4}^2 \biggr\rbrack \,dz \\
	& ~~ \lesssim  \bigl(\varepsilon^2 \norm{v}{H^1}^2 \norm{\xi}{H^2}^2 + \norm{v}{H^2}^2 \bigr) \norm{v}{H^3}^2.
\end{aligned}
\end{equation}
Combining the above estimates, together with  \eqref{def:projcetion-boundness}, we have shown \eqref{ue:203} and \eqref{ue:204}.

In addition, notice that
 $ \dt \xi = \varepsilon^{-1} e^{-\varepsilon \xi} \dt e^{\varepsilon\xi} $. From \eqref{ue:2001}, one has
\begin{align}
	& \norm{\dt \xi}{\Lnorm2} \lesssim \varepsilon^{-1} e^{\varepsilon \norm{\xi}{H^2}} \norm{\dt e^{\varepsilon\xi}}{\Lnorm2} \lesssim \varepsilon^{-1} \mathcal H_2(\norm{\xi}{\Hnorm{2}},\norm{v}{\Hnorm{2}}),{\nonumber} \\ 
	& \norm{\dt \xi}{H^1} \lesssim \norm{\dt \xi}{\Lnorm2} + \varepsilon^{-1} e^{\varepsilon\norm{\xi}{H^2}}  \bigl( \norm{\xi_h}{\Lnorm3} \norm{\dt e^{\varepsilon \xi}}{\Lnorm6} + \norm{\dt (e^{\varepsilon\xi})_h}{\Lnorm2} \bigr) {\nonumber} \\
	& ~~ \lesssim \norm{\dt \xi}{\Lnorm2} + \varepsilon^{-1} e^{\varepsilon\norm{\xi}{H^2}}  \bigl( \norm{\xi}{H^2} \norm{\dt e^{\varepsilon \xi}}{H^1} + \norm{\dt (e^{\varepsilon\xi})_h}{2} \bigr) {\nonumber} \\
	& ~~ \lesssim \varepsilon^{-1} \mathcal H_2(\norm{\xi}{\Hnorm{2}},\norm{v}{\Hnorm{2}}). {\tag{\ref{ese:002}}}
\end{align}
On the other hand, \subeqref{eq:perturbation}{2}, \eqref{ue:206}, and \eqref{ue:205} yield
\begin{align}
	& \norm{\dt v}{\Lnorm 2} \lesssim  \mathcal H_2(\norm{\xi}{\Hnorm{2}},\norm{v}{\Hnorm{2}}) + \varepsilon^{-1}\norm{e^{\varepsilon\xi} \nablah \xi}{\Lnorm2} {\nonumber} \\
	& ~~ \lesssim (1 + \varepsilon^{-1}) \mathcal H_2(\norm{\xi}{\Hnorm{2}},\norm{v}{\Hnorm{2}}) , {\tag{\ref{ese:003}}} \\
	& \norm{\dt v}{H^1} \lesssim \mathcal H_1(\norm{\xi}{\Hnorm{2}},\norm{v}{\Hnorm{2}}) \norm{\nabla v}{H^2} {\nonumber} \\
	& ~~~~ + \mathcal H_2(\norm{\xi}{\Hnorm{2}},\norm{v}{\Hnorm{2}}) + \varepsilon^{-1} \norm{e^{\varepsilon\xi}\nablah \xi}{H^1} {\nonumber}\\
	& ~~ \lesssim \mathcal H_1(\norm{\xi}{\Hnorm{2}},\norm{v}{\Hnorm{2}}) \norm{\nabla v}{H^2} {\nonumber}\\
	& ~~~~ + (1+\varepsilon^{-1}) \mathcal H_2(\norm{\xi}{\Hnorm{2}},\norm{v}{\Hnorm{2}}), {\tag{\ref{ese:004}}}
\end{align}
where in the last inequality we have used the fact that
\begin{align*}
	& \norm{\partial_h (e^{\varepsilon\xi} \nablah \xi)}{\Lnorm2} \lesssim \varepsilon \norm{e^{\varepsilon \xi} \xi_h \nablah \xi}{\Lnorm 2} + \norm{e^{\varepsilon\xi} \nablah \xi_h}{\Lnorm 2} \\
	& ~~ \lesssim e^{\varepsilon \norm{\xi}{H^2}}\bigl( \varepsilon \norm{\xi_h}{\Lnorm3} \norm{\xi_h}{\Lnorm 6}+ \norm{\xi}{H^2}\bigr) \\
	& ~~ \lesssim e^{\varepsilon \norm{\xi}{H^2}}\bigl( \varepsilon \norm{\xi}{H^2}^2 + \norm{\xi}{H^2} \bigr).
\end{align*}
This finishes the proof of Proposition \ref{prop:apriori-estimate}.

\subsection{Local-in-time \textit{\textbf{a priori}} estimates and local well-posedness}\label{subsec:local-well-posedness}

In this subsection, we aim at establishing the following proposition:
\begin{prop}\label{prop:local-existence}
	In the case when either $ \Omega_h = \mathbb T^2 $ or $ \Omega_h = \mathbb R^2 $,
	consider initial data $ (\xi_0, v_0) \in H^2(\Omega_h\times 2\mathbb T) $, satisfying the compatibility conditions in \eqref{cmpbc}. Let $ M_0, M_1 $ be two positive constants satisfying
	\begin{equation}\label{bd:initial-data}
	  \norm{\xi_0}{H^2}^2 \leq M_0, \norm{v_0}{H^2}^2 \leq M_1.
	\end{equation}
	Then for some positive constant $ \varepsilon_0\in (0,1) $ small enough, any $ \varepsilon \in (0,\varepsilon_0) $, there exists $ T_\varepsilon \in (0,\infty) $ such that there exists a unique strong solution $ (\xi, v) $ to \eqref{eq:perturbation} in the time interval $ [0,T_\varepsilon] $ with
	\begin{equation}\label{le-regularity}
	\begin{gathered}
		\xi \in L^\infty(0,T_\varepsilon; H^2(\Omega_h\times 2\mathbb T)), ~\dt \xi \in L^\infty(0,T_\varepsilon; H^1(\Omega_h\times 2\mathbb T)),\\
		v \in L^\infty(0,T_\varepsilon; H^2(\Omega_h\times 2\mathbb T)) \cap L^2(0,T_\varepsilon; H^3(\Omega_h\times 2\mathbb T)), \\
		\dt v \in L^\infty(0,T_\varepsilon; L^2(\Omega_h\times 2\mathbb T)) \cap L^2(0,T_\varepsilon; H^1(\Omega_h\times 2\mathbb T)).
	\end{gathered}
	\end{equation}
	Moreover, there exist positive constants $ C_0, C_1 $ independent of $ \varepsilon $, and $ C_2 = C_2(\varepsilon, C_0M_0, C_1M_1) $ such that
	\begin{equation}\label{le-priori-estimate}
		\begin{gathered}
			\sup_{0\leq t\leq T_\varepsilon} \norm{\xi(t)}{H^2}^2 \leq C_0M_0, ~ \sup_{0\leq t \leq T_{\varepsilon}} \norm{v(t)}{H^2}^2 + \int_0^{T_\varepsilon} \norm{\nabla v(t)}{\Hnorm{2}}^2 \,dt \leq C_1 M_1,\\
			\sup_{0\leq t\leq T_{\varepsilon}} \lbrace \norm{\xi_t(t)}{H^1}^2 + \norm{\dt v(t)}{L^2}^2 \rbrace + \int_0^{T_\varepsilon} \norm{\dt v(t)}{H^1}^2 \,dt \leq C_2.
		\end{gathered}
	\end{equation}
	Here $ C_0 \in (1,\infty) $, $ C_1, C_2 $ are determined by \eqref{local-constrain-006}, \eqref{local-dt-density-1}, and \eqref{local-006}, below, and $ T_\varepsilon $ depends on $ M_0, M_1 $ and $ \varepsilon $.
\end{prop}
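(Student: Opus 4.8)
\emph{Proof proposal.} The plan is to build the solution by a linearized Picard iteration and then pass to the limit, the {\it a priori} bounds being exactly those of Proposition \ref{prop:apriori-estimate} applied iterate-by-iterate. We will choose $\varepsilon_0\in(0,1)$ small in the course of the argument and fix $\varepsilon\in(0,\varepsilon_0)$ throughout. Set $(\xi^{(0)},v^{(0)})\equiv(\xi_0,v_0)$ (constant in time). Given $(\xi^{(n)},v^{(n)})$, define $w^{(n)}$ by the formula \eqref{def:vertical-velocity} with $(\xi,v)$ replaced by $(\xi^{(n)},v^{(n)})$, and let $(\xi^{(n+1)},v^{(n+1)})$ solve the linear system obtained from \eqref{eq:perturbation} by freezing, at step $n$, the transport velocities $(v^{(n)},w^{(n)})$ and the variable coefficients $e^{\varepsilon\xi^{(n)}}$, $e^{-\varepsilon\alpha\xi^{(n)}}$, while keeping the stiff acoustic coupling implicit at step $n+1$: the term $\frac{\gamma-1}{\varepsilon}\dvh\overline{v^{(n+1)}}$ in the ($z$-averaged) continuity equation for $\xi^{(n+1)}$ is paired with $\frac{c^2 e^{\varepsilon\xi^{(n)}}}{\varepsilon}\nablah\xi^{(n+1)}$ in the momentum equation for $v^{(n+1)}$, together with the full parabolic operator $c_1 e^{-\varepsilon\alpha\xi^{(n)}}(\mu\deltah+\lambda\nablah\dvh+\partial_{zz})$. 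Retaining this coupling is essential: it preserves the skew-symmetry of the $O(1/\varepsilon)$ part that produces, upon integration by parts and substitution of the continuity equation, the cancellation in \eqref{ue:001}; without it the top-order energy loses powers of $1/\varepsilon$. The equation for $\xi^{(n+1)}$ is a linear transport equation in $(x,y)$ (the constraint $\dz\xi^{(n+1)}=0$ propagates automatically, since we solve the $z$-averaged equation), and the equation for $v^{(n+1)}$ is a linear, uniformly parabolic system with smooth bounded coefficients; their joint solvability in the class \eqref{le-regularity} on a short time interval is standard (a semi-Galerkin scheme in $\Omega_h=\mathbb T^2$; in $\Omega_h=\mathbb R^2$ the same after a routine localization/approximation). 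The parity \eqref{SYM} is preserved at each step.

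Next I would establish uniform-in-$n$ bounds on the iterates. Running the computations of the proof of Proposition \ref{prop:apriori-estimate} on the linear system — the only difference being that the regular factors $\mathcal H_1,\mathcal H_2$ now carry the $H^2$ norms of the previous iterate $(\xi^{(n)},v^{(n)})$ rather than of the solution itself, and that the nonlocal vertical velocity $w^{(n)}$ (and its derivatives \eqref{vertical-velocity-dz}--\eqref{vertical-velocity-dhz}) is frozen — gives, after integrating in time, a local-in-time analogue of \eqref{ue:spatial-derivative} together with the analogues of \eqref{ue:2001}--\eqref{ese:004}. A continuity (bootstrap) argument on a short interval $[0,T_\varepsilon]$ then closes: if $\sup_{[0,T_\varepsilon]}\norm{\xi^{(n)}}{\Hnorm{2}}^2\le C_0M_0$ and $\sup_{[0,T_\varepsilon]}\norm{v^{(n)}}{\Hnorm{2}}^2+\int_0^{T_\varepsilon}\norm{\nabla v^{(n)}}{\Hnorm{2}}^2\le C_1M_1$, the same holds for $(\xi^{(n+1)},v^{(n+1)})$, provided $T_\varepsilon$ is small depending on $M_0,M_1,\varepsilon$; here $C_0>1$ absorbs the factor $e^{\varepsilon\norm{\xi_0}{\Hnorm{2}}}$ appearing on the right of \eqref{ue:spatial-derivative}, and $C_0,C_1$ are independent of $\varepsilon$. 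The $\varepsilon$-dependence of $T_\varepsilon$ and of the time-derivative bound $C_2$ enters precisely through the singular term $\frac{c^2 e^{\varepsilon\xi}}{\varepsilon}\nablah\xi$ when estimating $\norm{\dt v}{\Lnorm2}$ and $\norm{\dt v}{\Hnorm1}$ (cf. \eqref{ese:003}--\eqref{ese:004}), which is why one only gets $C_2=C_2(\varepsilon,C_0M_0,C_1M_1)$. This produces the uniform bounds \eqref{le-priori-estimate} and the regularity \eqref{le-regularity} for every iterate.

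Then I would prove convergence of the iteration and pass to the limit. Writing $(\delta\xi,\delta v,\delta w):=(\xi^{(n+1)}-\xi^{(n)},v^{(n+1)}-v^{(n)},w^{(n+1)}-w^{(n)})$, subtract consecutive linear systems and perform the basic $L^2$ energy estimate, once more using the skew-symmetry of the stiff terms (with the frozen coefficient $e^{\varepsilon\xi^{(n)}}$) to cancel the $1/\varepsilon$ contribution at leading order; the differences of the frozen transport velocities and coefficients, as well as $\delta w$ reconstructed from $(\delta\xi,\delta v)$ via \eqref{def:vertical-velocity}, are controlled by the uniform $H^2$ bounds of the previous step and by interpolation. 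One obtains a Gr\"onwall inequality bounding $\norm{\delta v(t)}{\Lnorm2}^2+\norm{\delta\xi(t)}{\Lnorm2}^2+\int_0^t\norm{\nabla\delta v}{\Lnorm2}^2$ by $C(\varepsilon,M_0,M_1)\,T_\varepsilon$ times the norm of the previous difference; shrinking $T_\varepsilon$ makes the map a contraction in $C([0,T_\varepsilon];L^2)\cap L^2(0,T_\varepsilon;H^1)$ for $v$ and in $C([0,T_\varepsilon];L^2)$ for $\xi$, so $(\xi^{(n)},v^{(n)})$ converges to a limit $(\xi,v)$. The uniform higher-order bounds plus weak-$*$ compactness give $(\xi,v)\in$ the class \eqref{le-regularity} satisfying \eqref{le-priori-estimate} (with the same constants, by weak lower semicontinuity), and interpolation between the strong low-order convergence and the uniform high-order bounds lets one pass to the limit in every term of \eqref{eq:perturbation}, including the nonlinear ones; hence $(\xi,v)$ is a strong solution. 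Uniqueness in the class \eqref{le-regularity} follows from the same $L^2$ difference estimate applied to two solutions and closed by Gr\"onwall.

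The main obstacle is twofold and both parts are about structure rather than new estimates: (i) arranging the linearization so that the skew-symmetric coupling of the $O(1/\varepsilon)$ terms survives at every step — otherwise the energy inequality degenerates and no $\varepsilon$-admissible existence time remains (note that, unlike in the uniform-stability argument of Section~\ref{subsec:uniform-stability}, here $T_\varepsilon$ and $C_2$ are allowed to depend on $\varepsilon$, but $C_0,C_1$ must not); and (ii) carrying the reconstructed vertical velocity $w$ and its differences $\delta w$ through the nonlocal formula \eqref{def:vertical-velocity} so that the Minkowski/interpolation estimates used for the terms $I_1''',\,I_{10},\,I_{13},\,I_{14}$ in Proposition \ref{prop:apriori-estimate} transfer verbatim to the iterates and to the difference system. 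Everything else is a routine, if lengthy, repetition of the {\it a priori} estimates already in hand.
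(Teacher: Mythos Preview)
Your proposal is correct, but it differs from the paper's proof in one structural choice that is worth noting. You keep the stiff $O(1/\varepsilon)$ acoustic coupling \emph{implicit} at step $n{+}1$ (so $\frac{\gamma-1}{\varepsilon}\dvh\overline{v^{(n+1)}}$ appears in the $\xi^{(n+1)}$-equation and $\frac{c^2e^{\varepsilon\xi^{(n)}}}{\varepsilon}\nablah\xi^{(n+1)}$ in the $v^{(n+1)}$-equation), and you argue this is essential to avoid losing powers of $1/\varepsilon$. The paper does the opposite: it freezes \emph{everything}, including the singular terms, writing $\frac{\gamma-1}{\varepsilon}\dvh\overline{v}'$ and $\frac{c^2e^{\varepsilon\xi'}}{\varepsilon}\nablah\xi'$ as given source terms. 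This fully decouples the linearized system into a scalar transport equation for $\xi$ and a linear parabolic system for $v$, each solvable separately by classical theory. The price is that the $H^2$ energy estimates for $\xi$ and $v$ pick up factors of $\varepsilon^{-2}$ (see \eqref{local-h2-density} and the bound on $L_1$ below \eqref{local-003}); but since $T_\varepsilon$ is allowed to depend on $\varepsilon$, these are absorbed by taking $T_\varepsilon$ small in an $\varepsilon$-dependent way, while $C_0,C_1$ remain $\varepsilon$-independent. So your assertion that retaining the coupling is ``essential'' is too strong for \emph{this} proposition. What your scheme buys is a cleaner energy structure (the skew-symmetry cancellation of Proposition~\ref{prop:apriori-estimate} survives verbatim at each iterate) and a potentially better $T_\varepsilon$; what the paper's scheme buys is that the linear solvability step is genuinely elementary (no coupled hyperbolic--parabolic system to solve). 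The contraction step, in the low-order topology $C_tL^2\cap L^2_tH^1$ for $v$ and $C_tL^2$ for $\xi$, is carried out the same way in both approaches.
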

Proposition \ref{prop:local-existence} can be shown by applying the Banach fixed point theorem. In the following, without going into too much details, we will only sketch the proper steps to construct this local strong solution.
\begin{proof}[Sketch of constructing strong solutions]
	Let $ \xi',v' $ be regular enough functions. Consider the following linear system associated with \eqref{eq:perturbation}:
	\begin{equation}\label{eq:linearized-eq}
		\begin{cases}
			\dt \xi + \overline v' \cdot \nablah \xi + \dfrac{\gamma-1}{\varepsilon} \dvh \overline v' = 0 & \text{in} ~ \Omega_h \times 2 \mathbb T,\\
			\dt v + v' \cdot \nablah v + w' \dz v + \dfrac{c^2e^{\varepsilon\xi'}}{\varepsilon} \nablah \xi' \\
			~~~~ ~~~~ = c_1 e^{-\varepsilon\alpha\xi'} \bigl( \mu \deltah v + \lambda \nablah \dvh v + \partial_{zz} v \bigr) & \text{in} ~ \Omega_h \times 2 \mathbb T, \\
			\dz \xi = 0 & \text{in} ~ \Omega_h \times 2 \mathbb T,
		\end{cases}
	\end{equation}
	where $ w' $ is given by
	\begin{equation}\label{eq:linearized-vertical-velocity}
		w':= - \int_0^z (\varepsilon\alpha \widetilde v' \cdot \nablah \xi' + \dvh \widetilde v' ) \,dz.
	\end{equation}
	Then for $ (\xi', v') $ with $ (\xi', v')\bigr|_{t=0} = (\xi_0, v_0) $ and satisfying the same regularity and bounds as in \eqref{le-regularity} and \eqref{le-priori-estimate}, there is a unique solution to system \eqref{eq:linearized-eq} with initial data $ (\xi, v)\bigr|_{t=0} = (\xi_0, v_0) $, after applying the standard existence theory for linear hyperbolic and parabolic equations.
	Moreover, similar {\it a priori} estimates as in our previous work \cite{LT2018a} show that the solution $ (\xi, v) $ satisfies the same regularity and bounds of norms in \eqref{le-regularity} and \eqref{le-priori-estimate}. We define the following function framework in order to apply the Banach fixed point theorem.
	
	Consider the function space
	\begin{equation}\label{fixed-point-spaces}
		\begin{aligned}
			& \mathfrak Y_{T_\varepsilon} := \bigl\lbrace (\xi, v) | \xi \in L^\infty(0,T_\varepsilon; L^2(\Omega_h\times 2\mathbb T)),\\
			& ~~~~ v\in L^\infty(0,T_\varepsilon ;L^2(\Omega_h\times 2\mathbb T)) \cap L^2 (0,T_\varepsilon ;H^1(\Omega_h\times2\mathbb T)), \\
			& ~~~~ \dz \xi = 0
			\bigr\rbrace
		\end{aligned}
	\end{equation}
	and the following subset of $ \mathfrak Y_{T_\varepsilon} $
	\begin{equation}\label{fixed-point-set}
			\mathfrak X_{T_\varepsilon} := \bigl\lbrace (\xi, v) | (\xi, v) \in \mathfrak Y_{T_\varepsilon} ~ \text{and the bounds in} ~ \eqref{le-priori-estimate} ~ \text{hold}
			\bigr\rbrace.
	\end{equation}
	For any $ (\xi, v) \in \mathfrak Y_{T_\varepsilon} $, define the norm
	\begin{equation}\label{fixed-point-norms}
		\begin{aligned}
		& \norm{(\xi, v)}{\mathfrak Y_{T_\varepsilon}}^2 := \norm{\xi}{L^\infty(0,T_\varepsilon;L^2(\Omega_h\times 2\mathbb T))}^2+ \norm{v}{L^\infty(0,T_\varepsilon;L^2(\Omega_h\times 2\mathbb T))}^2 \\
		& ~~~~ + \norm{\nabla v}{L^2(0,T_\varepsilon;L^2(\Omega_h\times 2\mathbb T))}^2.
		\end{aligned}
	\end{equation}
	Then $ (\mathfrak Y_{T_\varepsilon}, \norm{\cdot}{\mathfrak Y_{T_\varepsilon}}) $ is a complete metric space.
	In addition, we define the following map: with same initial data $ (\xi',v')\bigr|_{t=0} =
	(\xi,v)\bigr|_{t=0} = (\xi_0, v_0) $,
	\begin{equation}\label{fixed-point-map}
	\begin{aligned}
	& \mathcal T: (\xi',v') \leadsto (\xi, v) ~~~ \text{mapping $(\xi',v') \in \mathfrak X_{T_\varepsilon} \subset \mathfrak Y_{T_\varepsilon} $}\\
	& ~~~~ ~~~~ \text{
		to the solution to system \eqref{eq:linearized-eq}}
	\end{aligned}
	\end{equation}
	with trivial extension outside the set $ \mathfrak X_{T_{\varepsilon}} $.
	
	Then we show that, For $ \varepsilon_0\in (0,1) $ small enough and any $ \varepsilon\in (0,\varepsilon_0) $, there exists $ T_\varepsilon \in (0,\infty) $, such that $ \mathcal T: \mathfrak Y_{T_\varepsilon} \mapsto \mathfrak Y_{T_\varepsilon} $
	is a contraction mapping in $ \mathfrak X_{T_\varepsilon} $. Notice, once this is proved, then the Banach fixed point theorem implies that we have a unique strong solution to \eqref{eq:perturbation} with $ \varepsilon, T_\varepsilon $ as described above.
	
	In the rest of this subsection, we focus on showing that for $ \varepsilon \in (0,\varepsilon_0) $ with $ \varepsilon_0 $ small enough, and some $ T_{\varepsilon} \in (0,\infty) $ depending on $ \varepsilon $, we have:
	\begin{enumerate}
		\item[(1)] $ \mathcal T $ maps $ \mathfrak X_{T_\varepsilon} $ into $ \mathfrak X_{T_\varepsilon} $; i.e., $ (\xi, v) = \mathcal T(\xi', v') \in \mathfrak X_{T_\varepsilon} $ for any $ (\xi',v') \in \mathfrak X_{T_\varepsilon} $;
		\item[(2)] $ \mathcal T $ is a contraction mapping in $ \mathfrak X_{T_\varepsilon} $ with respect to the topology of $ \mathfrak Y_{T_\varepsilon} $; i.e., for any $ (\xi_i', v_i') \in \mathfrak X_{T_\varepsilon} \subset \mathfrak Y_{T_\varepsilon} $ and $ (\xi_i, v_i) = \mathcal T (\xi_i', v_i') $, $ i=1,2 $, one has
		\begin{equation}\label{contraction}
			\norm{(\xi_{1}-\xi_2,v_{1}-v_{2})}{\mathfrak Y_{T_\varepsilon}} \leq q \norm{(\xi_{1}' - \xi'_2,v_{1}' - v_{2}')}{\mathfrak Y_{T_\varepsilon}},
		\end{equation}
		for some $ q \in (0,1) $.
	\end{enumerate}
We will only show the corresponding {\it a priori} estimates to show $(1)$ and $(2)$.

{\par\noindent \bf Proof of $ (1) $:} {\bf Estimates of $ \xi $.}
First, we shall derive the estimates of $ \xi $ from \subeqref{eq:linearized-eq}{1}. We shall only show the highest order estimates. After applying $ \partial_{hh} $ to \subeqref{eq:linearized-eq}{1}, we have
\begin{equation}\label{local-001}
	\dt \xi_{hh} + \overline{v}' \cdot \nablah \xi_{hh} = - 2  \overline{v}'_h \cdot\nablah \xi_h - \overline{v}'_{hh}\cdot \nablah \xi - \dfrac{\gamma-1}{\varepsilon} \dvh \overline{v}'_{hh}.
\end{equation}
Then after taking the $ L^2 $-inner product of \eqref{local-001} with $ 2 \xi_{hh} $ in $ \Omega_h $, it follows
\begin{align*}
	& \dfrac{d}{dt} \normh{\xi_{hh}}{L^2}^2 = \int_{\Omega_h} \dvh \overline{v}'\abs{\xi_{hh}}{2} \idxh - 4 \int_{\Omega_h} (\overline{v}'_h\cdot\nablah \xi_h) \xi_{hh} \idxh \\
	& ~~~~ ~~~~ - 2 \int_{\Omega_h} ( \overline{v}_{hh}' \cdot \nablah \xi) \xi_{hh} \idxh  - \dfrac{2(\gamma-1)}{\varepsilon} \int_{\Omega_h} \dvh \overline{v}_{hh}' \xi_{hh} \idxh \\
	& ~~~~ \leq C \norm{\nabla v'}{H^2} \normh{\xi}{H^2}^2 + C \varepsilon^{-1} \norm{\nabla  v'}{H^2} \normh{\xi_{hh}}{L^2} .
\end{align*}
Similar arguments also hold for $ \normh{\xi_h}{\Lnorm{2}} $ and $ \normh{\xi}{\Lnorm{2}} $, and therefore we have
\begin{align*}
	& \dfrac{d}{dt} \normh{\xi}{H^2}^2 \leq C \norm{\nabla v'}{H^2} \normh{\xi}{H^2}^2 + C\varepsilon^{-1} \norm{\nabla v'}{H^2}\normh{\xi}{H^2}\\
	& ~~~~ \leq 2 C \norm{\nabla v'}{H^2} \normh{\xi}{H^2}^2 + C \varepsilon^{-2} \norm{\nabla v'}{\Hnorm{2}}.
\end{align*}
Then applying the Gr\"onwall inequality and the H\"older inequality yields
\begin{equation}\label{local-h2-density}
	\begin{aligned}
	&	\sup_{0\leq t \leq T_\varepsilon}\norm{\xi(t)}{H^2}^2 \leq e^{\int_0^{T_\varepsilon} 2 C \norm{\nabla v'}{H^2}  \,dt }\biggl(\norm{\xi_0}{H^2}^2 \\
	& ~~~~  + C  \varepsilon^{-2} \int_0^{T_\varepsilon}  \norm{\nabla v'}{H^2} \,dt \biggr) \leq e^{2CT_\varepsilon^{1/2}(\int_0^{T_\varepsilon}  \norm{\nabla  v'}{H^2}^2  \,dt)^{1/2} }\biggl(\norm{\xi_0}{H^2}^2 \\
	& ~~~~ + C  \varepsilon^{-2} T_{\varepsilon}^{1/2}(\int_0^{T_\varepsilon}  \norm{\nabla v'}{H^2}^2 \,dt)^{1/2}\biggr) \\
	& ~~~~ \leq e^{2 C T_\varepsilon^{1/2}(C_1M_1)^{1/2})}(M_0 + C\varepsilon^{-2}  T_\varepsilon^{1/2}(C_1M_1)^{1/2}) \leq C_0 M_0,
	\end{aligned}
\end{equation}
where we have chosen $ T_\varepsilon $ sufficiently small in the last inequality and $ C_0 \in (1,\infty) $.

On the other hand, from \subeqref{eq:linearized-eq}{1}, we have
\begin{equation}\label{local-dt-density-1}
	\begin{aligned}
		& \normh{\dt \xi}{H^1}^2 \leq C \normh{\overline{v}'\cdot \nablah \xi}{H^1}^2 + \varepsilon^{-2} C \normh{\nablah \overline{v}'}{H^1}^2 \leq C (\norm{\xi}{H^2}^2 + \varepsilon^{-2}) \norm{v'}{H^2}^2\\
		& ~~~~ \leq C (C_0M_0 + \varepsilon^{-2}) C_1M_1 < \infty .
	\end{aligned}
\end{equation}
{\bf Estimates of $ v $.}
Next we shall present the estimates of $ v $. Similarly, we will only sketch the highest order estimates. After applying $ \partial_{hh} $ to \subeqref{eq:linearized-eq}{2}, one obtains
\begin{equation}\label{local-002}
	\begin{aligned}
		& \dt v_{hh} + v' \cdot \nablah v_{hh} +  w' \dz v_{hh} - c_1 e^{-\varepsilon\alpha\xi'} ( \mu \deltah v_{hh} + \lambda \nablah\dvh v_{hh} + \partial_{zz}v_{hh} ) \\
		& ~~~~  =  - \bigl(\dfrac{c^2 e^{\varepsilon \xi'}}{\varepsilon} \nablah \xi' \bigr)_{hh} - v'_{hh}\cdot \nablah v - 2  v'_{h} \cdot \nablah v_h -  w'_{hh} \dz v - 2  w'_h \dz v_h \\
		& ~~~~ ~~~~ - \varepsilon c_1\alpha e^{-\varepsilon \alpha \xi'} ( \xi'_{hh} - \varepsilon \alpha (\xi'_h)^2 ) (\mu \deltah v + \lambda \nablah \dvh v + \partial_{zz} v ) \\
		& ~~~~ ~~~~ - 2 \varepsilon c_1 \alpha e^{-\varepsilon \alpha  \xi'}\xi'_h (\mu \deltah v_{h} + \lambda \nablah \dvh v_h + \partial_{zz} v_h ).
	\end{aligned}
\end{equation}
Then after taking the $ L^2 $-inner product of \eqref{local-002} with $ 2 v_{hh} $ in $ \Omega_h \times 2 \mathbb T $, it follows, after applying integration by parts,
\begin{align*}
	& \dfrac{d}{dt} \norm{v_{hh}}{L^2}^2 + 2 c_1 \int e^{-\varepsilon \alpha \xi'} ( \mu \abs{\nablah v_{hh}}{2} + \lambda \abs{\dvh v_{hh}}{2} + \abs{\dz v_{hh}}{2}) \idx \\
	& ~~~~ = \int \bigl( \dfrac{2c^2 e^{\varepsilon \xi'}}{\varepsilon}\nablah \xi'\bigr)_h \cdot v_{hhh} \idx +  \int \biggl( \dvh v' \abs{v_{hh}}{2} -2 (v'_{hh} \cdot\nablah v) \cdot v_{hh} \\
	& ~~~~ ~~~~ - 4 ( v'_h \cdot \nablah v_h) \cdot v_{hh} - \dvh \widetilde{v}' \abs{v_{hh}}{2} \biggr) \idx - 2 \int \biggl( w'_h \dz v_h \cdot v_{hh} \\
	& ~~~~ ~~~~ - w'_h \dz v \cdot v_{hhh} \biggr) \idx   - \varepsilon\alpha \int ( \widetilde{v}' \cdot \nablah \xi') \abs{v_{hh}}{2} \idx \\
	& ~~~~ - 2 \varepsilon c_1 \alpha \int e^{-\varepsilon \alpha \xi'} ({\xi}'_{hh} - \varepsilon \alpha ({\xi}_h')^2 ) (\mu \deltah v + \lambda \nablah \dvh v + \partial_{zz} v ) \cdot v_{hh} \idx \\
	& ~~~~ - 4 \varepsilon c_1 \alpha \int e^{-\varepsilon\alpha \xi'} {\xi}'_h (\mu \deltah v_{h} + \lambda \nablah \dvh v_h + \partial_{zz} v_h ) \cdot v_{hh}\idx \\
	& ~~~~ + 2 \varepsilon c_1 \alpha \int e^{-\varepsilon\alpha\xi'} (\mu (\nablah \xi' \cdot \nablah v_{hh}) \cdot v_{hh} + \lambda (v_{hh} \cdot \nablah \xi') (\dvh v_{hh}) ) \idx\\
	& ~~~~ =: L_1 + L_2 + L_3 + L_4 + L_5 + L_6 + L_7,
\end{align*}
where we have substituted the identity
\begin{equation*}
		w_z' = - \varepsilon\alpha \widetilde{v}' \cdot \nablah \xi' - \dvh \widetilde{v}',
\end{equation*}
which is obtained by taking $ \dz$ to
\eqref{eq:linearized-vertical-velocity},

Similarly as in the last section, from \eqref{eq:linearized-vertical-velocity}, we have
\begin{equation}\label{local-vertical-velocity-001}
		w'_h = - \int_0^z \bigl( \varepsilon \alpha \widetilde{v}'_h \cdot \nablah \xi' + \varepsilon \alpha \widetilde{v}'\cdot\nablah \xi'_h + \dvh \widetilde{v}'_h \bigr) \,dz.
\end{equation}
Then following similar arguments as before, one can derive, for any $ \delta \in (0,1) $ and fixed $ \varepsilon \in (0,1) $,
\begin{equation}
	\begin{aligned}
	& L_1 + L_2 + L_3 + L_4 + L_5 + L_6 + L_7 \leq  \bigl( \delta + \varepsilon \mathcal H_1(\norm{\xi'}{H^2}, \norm{v'}{H^2})  \bigr)\\
	& ~~~~\times \norm{\nabla v}{H^2}^2
	 + \delta \norm{\nabla v'}{H^2}^2 \norm{v}{H^2}^2 +  \mathcal H_2(\varepsilon, C_\delta, \norm{\xi'}{H^2}, \norm{v'}{H^2}) \norm{v}{H^2}^2 \\
	 & ~~~~ +  \mathcal H_3(\varepsilon, C_\delta, \norm{\xi'}{H^2}, \norm{v'}{H^2}),
	 \end{aligned}
\end{equation}
where $ C_\delta \simeq \delta^{-1} $. We remind readers that we have been using $ \lbrace \mathcal H_i\rbrace_{i=1,2,3}$ to denote regular functions of the arguments with property $ \mathcal H_i(0) = 0 $.
Details are listed below, for readers' reference:
\begin{align*}
	& L_1 \lesssim \norm{v_{hhh}}{L^2} \bigl( \varepsilon^{-1} e^{\varepsilon \norm{\xi'}{H^2}} \norm{\xi'}{H^2} + e^{\varepsilon \norm{\xi'}{H^2}} \norm{\xi'}{H^2}^2 \bigr)\\
	& ~~ \lesssim \delta \norm{\nabla v}{H^2}^2 + C_\delta e^{2 \varepsilon \norm{\xi'}{H^2}} \bigl( \varepsilon^{-2} \norm{\xi'}{H^2}^2 + \norm{\xi'}{H^2}^4 \bigr),\\
	& L_2 \lesssim \norm{\nabla v'}{L^6} \norm{v_{hh}}{L^3} \norm{v_{hh}}{L^2} + \norm{v'_{hh}}{L^2} \norm{\nablah v}{L^3} \norm{v_{hh}}{L^6} \\
	& ~~ \lesssim \norm{v'}{H^2} \bigl( \norm{v}{H^2}^{3/2} \norm{\nabla v}{H^2}^{1/2}
	+ \norm{v}{H^2} \norm{\nabla v}{H^2}\bigr) \\
	& ~~ \lesssim \delta \norm{\nabla v}{H^2}^2 + C_\delta \bigl(\norm{v'}{H^2}^{2} + 1\bigr) \norm{v}{H^2}^2, \\
	& L_3 \lesssim \int_0^1 \biggl(\varepsilon \normh{ v_h'}{L^4} \normh{ \xi_h'}{L^4} + \varepsilon \normh{ v'}{L^\infty} \normh{ \xi'}{H^2} \biggr) \,dz \cdot \int_0^1 \biggl( \normh{\dz v_h}{L^4} \normh{v_{hh}}{L^4} \\
	& ~~~~  + \normh{\dz v}{L^\infty} \normh{v_{hhh}}{L^2} \biggr)\,dz
	 + \int_0^1 \normh{v_{hh}'}{L^4}\,dz \cdot \int_0^1 \biggl( \normh{\dz v_h}{L^2} \normh{v_{hh}}{L^4} \\
	& ~~~~ + \normh{\dz v}{L^4} \normh{v_{hhh}}{L^2} \biggr) \,dz \lesssim \int_0^1 \varepsilon \normh{v'}{H^2} \normh{\xi' }{H^2}\,dz \cdot\int_0^1 \biggl( \normh{\nabla v}{H^2}( \normh{\nabla v}{H^1}\\
	& ~~~~ ~~~~ + \normh{\nabla v}{H^2} ) \biggr) \,dz + \int_0^1 \normh{v'}{H^2}^{1/2}\normh{\nabla  v'}{H^2}^{1/2} \,dz  \cdot \int_0^1 \biggl( \normh{\nabla v}{H^1}^{3/2}\normh{\nabla v}{H^2}^{1/2} \\
	& ~~~~ ~~~~ + \normh{\nabla v}{H^1} \normh{\nabla v}{H^2} \biggr)\,dz \lesssim \bigl(  \varepsilon \norm{ \xi'}{H^2} \norm{ v'}{H^2} + \delta \bigr) \norm{\nabla v}{H^2}^2 \\
	& ~~~~  + \delta \norm{\nabla  v'}{H^2}^2 \norm{v}{H^2}^2 + \varepsilon \norm{ \xi'}{H^2} \norm{ v'}{H^2}  \norm{v}{H^2}^2 \\
	& ~~~~ + C_\delta (\norm{ v'}{H^2}^2 + 1) \norm{v}{H^2}^2 , \\
	& L_4 \lesssim \varepsilon \norm{v'}{H^2} \norm{\nablah \xi'}{L^6} \norm{v_{hh}}{L^2} \norm{v_{hh}}{L^3} \lesssim \varepsilon \norm{v'}{H^2} \\
	& ~~~~ ~~~~ \times \norm{\xi'}{H^2} \norm{v}{H^2}^{3/2} \norm{\nabla v}{H^2}^{1/2}
	 \lesssim \delta \norm{\nabla v}{H^2}^2 \\
	& ~~~~ + C_\delta \varepsilon^{4/3} \norm{ v'}{H^2}^{4/3} \norm{ \xi'}{H^2}^{4/3} \norm{v}{H^2}^2,\\
	& L_5 \lesssim \varepsilon e^{\varepsilon \norm{\xi'}{H^2}} \bigl( \norm{\xi'}{H^2}+ \varepsilon \norm{\xi'}{H^2}^2 \bigr) \norm{\nabla^2 v}{L^3} \norm{v_{hh}}{L^6} \lesssim \varepsilon e^{\varepsilon \norm{\xi'}{H^2}} \\
	& ~~~~ ~~~~ \times \bigl( \norm{\xi'}{H^2}
	+ \varepsilon \norm{\xi'}{H^2}^2 \bigr) \times \norm{v}{H^2}^{1/2} \norm{\nabla v}{H^2}^{3/2} \lesssim \delta \norm{\nabla v}{H^2}^2 \\
	& ~~~~ + C_\delta \varepsilon^{4} \bigl( \norm{\xi'}{H^2}^4 + \varepsilon^4 \norm{\xi'}{H^2}^8 \bigr)e^{4\varepsilon \norm{\xi'}{H^2}} \norm{v}{H^2}^2, \\
	& L_6 + L_7 \lesssim \varepsilon e^{\varepsilon \norm{\xi'}{H^2}} \norm{\xi'}{H^2} \norm{\nabla v}{H^2}^{3/2} \norm{v}{H^2}^{1/2} \lesssim \delta \norm{\nabla v}{H^2}^2 \\
	& ~~~~ + \varepsilon^{4} \norm{\xi'}{H^2}^{4} e^{4\varepsilon \norm{\xi'}{H^2}} \norm{v}{H^2}^2.
\end{align*}

Similar estimates also hold for $ \norm{v_{hz}}{\Lnorm{2}}, \norm{v_{zz}}{\Lnorm{2}}, \norm{v_h}{\Lnorm{2}}, \norm{v_{z}}{\Lnorm{2}}, \norm{v}{\Lnorm{2}} $.
Then we have arrived at the estimate
\begin{equation}\label{local-003}
	\begin{aligned}
	& \dfrac{d}{dt} \norm{v}{H^2}^2 + c_1 \min\lbrace \mu ,\lambda, 1\rbrace \norm{\nabla v}{H^2}^2 \leq  \bigl( \delta + \varepsilon \mathcal H_1(\norm{\xi'}{H^2}, \norm{v'}{H^2})  \bigr) \\
	& ~~~~ \times \norm{\nabla v}{H^2}^2
	+ \delta \norm{\nabla v'}{H^2}^2 \norm{v}{H^2}^2 +  \mathcal H_2(\varepsilon, C_\delta, \norm{\xi'}{H^2}, \norm{v'}{H^2}) \norm{v}{H^2}^2 \\
	 & ~~~~ +  \mathcal H_3(\varepsilon, C_\delta, \norm{\xi'}{H^2}, \norm{v'}{H^2}) ,
	\end{aligned}
\end{equation}
where we have chosen $ \varepsilon \in (0,\varepsilon_0) $, with $ \varepsilon_0 $ small enough such that
\begin{equation}\label{local-constrain-002}
	\varepsilon \alpha \norm{\xi'}{H^2} \leq \varepsilon \alpha (C_0M_0)^{1/2} \leq \log 2, ~~ \text{and thus} ~ 1/2 \leq e^{-\varepsilon\alpha \xi'} < 2.
\end{equation}
Next, we choose $ \varepsilon_0, \delta $ small enough such that
\begin{equation}\label{local-constrain-003}
	\begin{aligned}
	& \delta + \varepsilon_0 \mathcal H_1( \norm{\xi'}{H^2}, \norm{v'}{H^2}) \leq \delta + \varepsilon_0 \mathcal H_1( (C_0M_0)^{1/2}, (C_1M_1)^{1/2}) \\
	& ~~~~ \leq \dfrac{c_1 \min\lbrace \mu, \lambda, 1\rbrace}{2}.
	\end{aligned}
\end{equation}
Then for $ \varepsilon \in (0,\varepsilon_0) $, after applying the Gr\"onwall inequality to \eqref{local-003}, we have
\begin{equation}\label{local-004}
	\begin{aligned}
		& \sup_{0\leq t\leq T_\varepsilon} \norm{v(t)}{H^2}^2 + \dfrac{c_1\min\lbrace\mu,\lambda,1\rbrace}{2} \int_0^{T_\varepsilon} \norm{\nabla v}{H^2}^2 \,dt \\
		& ~~~~ \leq e^{\delta \int_0^{T_\varepsilon} \norm{\nabla v'}{H^2}^2 \,dt + T_\varepsilon \mathcal H_2(\varepsilon, C_\delta, \norm{ \xi'}{H^2},\norm{ v'}{H^2})} \\
		& ~~~~ \times \bigl(  \norm{v_0}{H^2}^2 + T_\varepsilon \mathcal H_3(\varepsilon, C_\delta, \norm{ \xi'}{H^2},\norm{ v'}{H^2}) \bigr).
	\end{aligned}
\end{equation}
Now we let $ \delta $ small enough such that
\begin{equation}\label{local-constrain-004}
	\delta \int_0^{T_\varepsilon} \norm{\nabla v'}{H^2}^2 \,dt \leq \delta C_1M_1 \leq \log 2.
\end{equation}
Then for fixed $ \varepsilon_0, \delta $ satisfying \eqref{local-constrain-003} and \eqref{local-constrain-004}, and $ \varepsilon \in (0,\varepsilon_0) $, let $ T_\varepsilon $ small enough, depending on $ \varepsilon, \delta $, such that
\begin{equation}\label{local-constrain-005}
	\begin{aligned}
		& T_\varepsilon \mathcal H_2(\varepsilon,C_\delta, \norm{\xi'}{H^2},\norm{v'}{H^2}) \leq T_\varepsilon \mathcal H_2(\varepsilon,C_\delta, (C_0M_0)^{1/2},(C_1M_1)^{1/2}) < \log 2,\\
		& T_\varepsilon \mathcal H_3(\varepsilon,C_\delta, \norm{\xi'}{H^2},\norm{v'}{H^2})\leq T_\varepsilon \mathcal H_3(\varepsilon,C_\delta, (C_0M_0)^{1/2},(C_1M_1)^{1/2}) < \dfrac{M_1}{2}.
	\end{aligned}
\end{equation}
Then \eqref{local-004} yields
\begin{equation}\label{local-005}
	\begin{aligned}
		& \sup_{0\leq t\leq T_\varepsilon} \norm{v(t)}{H^2}^2 + \dfrac{c_1\min\lbrace\mu,\lambda,1\rbrace}{2} \int_0^{T_\varepsilon} \norm{\nabla v}{H^2}^2 \,dt \leq 4 (M_1 + M_1/2) \\
		& ~~~~ = 6 M_1  \leq \min\biggl\lbrace \dfrac{1}{3}, \dfrac{c_1 \min\lbrace \mu, \lambda, 1\rbrace}{6} \biggr\rbrace C_1M_1.
	\end{aligned}
\end{equation}
Here we have required $ C_1 $ to be sufficiently large such that
\begin{equation}\label{local-constrain-006}
	6 \leq \min\biggl\lbrace \dfrac{1}{3}, \dfrac{c_1 \min\lbrace \mu, \lambda, 1\rbrace}{6} \biggr\rbrace C_1.
\end{equation}
On the other hand, from \subeqref{eq:linearized-eq}{2}, we have
\begin{align*}
	& \norm{\dt v}{L^2}^2 \lesssim \bigl( \norm{v'}{H^2}^2 + e^{2\varepsilon \alpha \norm{\xi'}{H^2}} + \varepsilon^2 \norm{v'}{H^1}^2 \norm{\xi'}{H^2}^2 \bigr) \norm{v}{H^2}^2\\
	& ~~~~ + \varepsilon^{-2} e^{2\varepsilon \norm{\xi'}{H^2}}\norm{\xi'}{H^1}^2 ,\\
	& \norm{\dt v}{H^1}^2 \lesssim \bigl( \norm{v'}{H^2}^2 + e^{2\varepsilon \alpha \norm{\xi'}{H^2}} + \varepsilon^2 \norm{v'}{H^1}^2 \norm{\xi'}{H^2}^2 \bigr)  \norm{v}{H^2}^2 \\
	& ~~~~ + \bigl( (\norm{\xi'}{H^2}^2 + 1)e^{2\varepsilon \alpha \norm{\xi'}{H^2}} + \varepsilon^2 \norm{\xi'}{H^2}^2 \norm{ v'}{H^2}^2 + \norm{ v'}{H^2}^2 \bigr)\norm{\nabla v}{H^2}^2\\
	& ~~~~ + \varepsilon^{-2}e^{2\varepsilon \norm{\xi'}{H^2}} \norm{ \xi'}{H^2}^2 + e^{2\varepsilon \norm{\xi'}{H^2}} \norm{\xi'}{H^2}^4,
\end{align*}
after applying similar arguments as in \eqref{ue:206} and \eqref{ue:205}.
Then we have the following
\begin{equation}\label{local-006}
	\begin{aligned}
		& \norm{v_t}{L^2}^2 \leq \mathcal H_1(\varepsilon, \norm{\xi'}{H^2}, \norm{v'}{H^2}) \norm{v}{H^2}^2+ \mathcal H_2(\varepsilon, \norm{\xi'}{H^2}) \\
		& ~~~~ \leq \mathcal H_1( \varepsilon, (C_0M_0)^{1/2}, (C_1M_1)^{1/2}) 6 M_1 + \mathcal H_2( \varepsilon, (C_0M_0)^{1/2})  < \infty , \\
		& \int_0^T \norm{v_t}{H^1}^2 \leq \int_0^T \biggl( \mathcal H_1(\varepsilon, (C_0M_0)^{1/2}, (C_1M_1)^{1/2}) 6M_1 \\
		& ~~~~ + \mathcal H_2(\varepsilon, (C_0M_0)^{1/2},(C_1M_1)^{1/2}) \norm{\nabla v}{H^2}^2 + \mathcal H_3(\varepsilon, (C_0M_0)^{1/2}) \biggr)\,dt < \infty.
	\end{aligned}
\end{equation}
This finishes the proof of $ (1) $.

{\par\noindent \bf Proof of $ (2) $:} Denote by $ (\xi_{12},v_{12}) := (\xi_{1} - \xi_2, v_1 - v_2) $ and $ (\xi'_{12},v'_{12}) := (\xi'_{1} - \xi'_2, v'_1 - v'_2) $. Then $ (\xi_{12}, v_{12}) $ satisfies the following system:
\begin{equation}\label{eq:contract-map}
		\begin{cases}
			\dt \xi_{12}  = - \overline{v}'_{1} \cdot \nablah \xi_{12} - \overline{v}'_{12} \cdot \nablah \xi_2 - \dfrac{\gamma-1}{\varepsilon} \dvh \overline{v}'_{12}
			& \text{in} ~ \Omega_h \times 2\mathbb T,\\
			\dt v_{12} - c_1\bigl( \mu \deltah v_{12} + \lambda \nablah \dvh v_{12} + \partial_{zz} v_{12} \bigr) \\
			~~ = - v'_1 \cdot\nablah v_{12} - v'_{12} \cdot \nablah v_2 - w'_1 \dz v_{12} - (w'_1-w'_2 )\dz v_2\\
			~~~~ - \dfrac{c^2 e^{\varepsilon \xi'_1}}{\varepsilon} \nablah \xi_{12}' - \dfrac{c^2 e^{\varepsilon \xi'_1} - c^2 e^{\varepsilon \xi'_2}}{\varepsilon} \nablah \xi_{2}' + c_1 \bigl( e^{-\varepsilon\alpha \xi'_1} - 1\bigr) \\
			~~~~ ~~~~  \times \bigl( \mu \deltah v_{12} + \lambda \nablah \dvh v_{12} + \partial_{zz} v_{12} \bigr) \\
			~~~~ + c_1 \bigl( e^{-\varepsilon\alpha \xi'_1} - e^{-\varepsilon\alpha\xi'_2}\bigr)
			\bigl( \mu \deltah v_{2} + \lambda \nablah \dvh v_{2} + \partial_{zz} v_{2} \bigr)
			& \text{in} ~ \Omega_h \times 2\mathbb T,
		\end{cases}
	\end{equation}
	with $ (\xi_{12}, v_{12}) \bigr|_{t=0} = 0 $. Then after taking the $ L^2 $-inner product of \subeqref{eq:contract-map}{1} with $ 2 \xi_{12} $ in $ \Omega_h $, one has, for any $ \delta \in (0,1) $ with corresponding $ C_\delta \simeq \delta^{-1} $,
	\begin{align*}
		& \dfrac{d}{dt} \normh{\xi_{12}}{\Lnorm{2}}^2 = \int_{\Omega_h} \dvh \overline v_1' \abs{\xi_{12}}{2} \idxh - 2 \int_{\Omega_h} (\overline v_{12}' \cdot \nablah \xi_2) \xi_{12} \idxh\\
		& ~~~~ - \dfrac{2(\gamma-1)}{\varepsilon} \int_{\Omega_h} \dvh \overline v_{12}' \xi_{12} \idxh \lesssim \normh{\nablah \overline v_1'}{L^\infty} \normh{\xi_{12}}{\Lnorm{2}}^2 \\
		& ~~~~~ + \normh{\overline v_{12}'}{\Lnorm{4}} \normh{\nablah \xi_2}{\Lnorm{4}} \normh{\xi_{12}}{\Lnorm{2}} + \normh{\nablah \overline v_{12}'}{\Lnorm{2}} \normh{ \xi_{12}}{\Lnorm{2}} \lesssim \delta \norm{\nabla v_{12}'}{\Lnorm{2}}^2 \\
		& ~~~~ + C_\delta \bigl(1 + \norm{\nabla v_1'}{\Hnorm{2}} + \normh{\xi_2}{\Hnorm{2}}^2 \bigr) \normh{\xi_{12}}{\Lnorm{2}}^2 + C_\delta \norm{v_{12}'}{\Lnorm{2}}^2,
	\end{align*}
	where we have applied the H\"older, Sobolev embedding and Young inequalities.
	Therefore, applying the Gr\"onwall inequality in the above inequality yields,
	\begin{equation}\label{local-101}
		\begin{aligned}
			& \sup_{0\leq t\leq T_\varepsilon} \norm{\xi_{12}}{\Lnorm{2}}^2 \leq e^{C_\delta \int_0^{T_\varepsilon} (1+ \norm{\nabla v_1'}{\Hnorm{2}} + \norm{\xi_2}{\Hnorm{2}}^2) \,dt
			} \\
			& ~~~~ \times \int_0^{T_{\varepsilon}} \bigl( \delta \norm{\nabla v_{12}'}{\Lnorm{2}}^2 + C_\delta \norm{v_{12}'}{\Lnorm{2}}^2  \bigr) \,dt \\
			& ~~ \leq e^{C_\delta (1+C_0M_0) T_{\varepsilon} + C_\delta (C_1M_1)^{\frac 1 2} T_{\varepsilon}^{\frac 1 2}} \bigl( \delta \norm{\nabla v_{12}'}{\Lnorm{2}(0,T_\varepsilon;L^2(\Omega_h\times 2\mathbb T))}^2\\
			& ~~~~ + C_\delta T_{\varepsilon} \norm{v_{12}'}{\Lnorm{\infty}(0,T_\varepsilon;L^2(\Omega_h\times 2\mathbb T))}^2 \bigr).
		\end{aligned}
	\end{equation}
	On the other hand, after taking the $ L^2 $-inner product of \subeqref{eq:contract-map}{2} with $ 2 v_{12} $ in $ \Omega_h \times 2 \mathbb T $ and applying integration by parts, one has
	\begin{align*}
		& \dfrac{d}{dt} \norm{v_{12}}{\Lnorm{2}}^2 + c_1 \bigl( \mu \norm{\deltah v_{12}}{\Lnorm{2}}^2 + \lambda \norm{\dvh v_{12}}{\Lnorm{2}}^2 + \norm{\dz v_{12}}{\Lnorm{2}}^2 \bigr) \\
		& ~~~~ =   \int ( \dvh v'_1 -  \varepsilon \alpha \widetilde v_1' \cdot \nablah \xi_1' - \dvh \widetilde v'_1 ) \abs{v_{12}}{2} \idx  - \int ( v_{12}' \cdot \nablah v_2 ) \cdot v_{12} \idx\\
		& ~~~~ + \int \int_0^z \bigl( \varepsilon \alpha \widetilde v_{12}' \cdot \nablah \xi_1' + \dvh \widetilde v_{12}' \bigr) \,dz \times (\dz v_2 \cdot v_{12}) \idx \\
		& ~~~~ - \int \biggl( \int_0^z ( \varepsilon\alpha (\dvh \widetilde v_2') \xi_{12}' ) \,dz \times ( \dz v_2 \cdot v_{12}) \\
		& ~~~~ ~~~~ + \int_0^z ( \varepsilon\alpha \xi_{12}' \widetilde v_2' ) \,dz \cdot \nablah (\dz v_2 \cdot v_{12}) \biggr) \idx  \\
		& ~~~~
		+ \int \xi_{12}' \bigl(  \dfrac{c^2 e^{\varepsilon \xi_1'}}{\varepsilon} \dvh v_{12} + c^2 e^{\varepsilon \xi_1'} v_{12} \cdot \nablah \xi_1' \bigr)
		\idx \\
		& ~~~~ - \int \dfrac{c^2(e^{\varepsilon\xi_1'} - e^{\varepsilon \xi_2'})}{\varepsilon} \nablah \xi_2' \cdot v_{12}\idx - c_1 \int \biggl( \bigl( e^{-\varepsilon\alpha\xi_1'} - 1 \bigr) \bigl( \mu \abs{\nablah v_{12}}{2} \\
		& ~~~~ + \lambda \abs{\dvh v_{12}}{2} + \abs{\dz v_{12}}{2} \bigr) \biggr) \idx + c_1 \varepsilon \alpha \int \biggl( e^{-\varepsilon\alpha\xi_{1}'} \bigl( \mu (\nablah \xi_1' \cdot \nablah v_{12}) \cdot v_{12} \\
		& ~~~~ + \lambda ( v_{12} \cdot \nablah \xi_1') (\dvh v_{12}) \biggr) \idx + c_1 \int \biggl( \bigl( e^{-\varepsilon\alpha\xi_1'} - e^{-\varepsilon\alpha\xi_2'}\bigr) \\
		& ~~~~ \times \bigl( \mu \deltah v_2 + \lambda \nablah \dvh v_2 + \partial_{zz} v_2 \bigr) \cdot v_{12} \biggr) \idx =: L_8 + L_9 + L_{10} + L_{11} \\
		& ~~~~ + L_{12} + L_{13} + L_{14} + L_{15} + L_{16},
	\end{align*}
	where we have substituted \eqref{eq:linearized-vertical-velocity} for $ w'_i $, $ i=1,2 $.
	Consequently, after applying similar arguments as before, we have the following estimates of the terms on the right-hand side: for any $ \delta \in (0,1) $ and some constant $ C_\delta \simeq \delta^{-1} $,
	\begin{align*}
		& L_{8} \lesssim \bigl( \norm{\nablah v_1'}{\Lnorm{3}} + \norm{v_1'}{\Lnorm{6}} \norm{\nablah \xi_1'}{\Lnorm{6}}\bigr) \norm{v_{12}}{\Lnorm{2}} \norm{v_{12}}{\Lnorm{6}} \\
		& ~~ \lesssim \bigl( \norm{v_1'}{\Hnorm{2}}+ \norm{v_1'}{\Hnorm{1}} \norm{\xi_1'}{\Hnorm{2}} \bigr) \norm{v_{12}}{\Lnorm{2}} \bigl( \norm{\nabla v_{12}}{\Lnorm{2}} + \norm{v_{12}}{\Lnorm{2}}\bigr)\\
		& ~~ \lesssim \delta \norm{\nabla v_{12}}{\Lnorm{2}}^2 + \mathcal H( C_\delta, C_0M_0, C_1M_1) \norm{v_{12}}{\Lnorm{2}}^2,\\
		& L_9 \lesssim \norm{v_{12}'}{\Lnorm{2}} \norm{\nablah v_2}{\Lnorm{6}} \norm{v_{12}}{\Lnorm{3}} \lesssim \delta \norm{\nabla v_{12}}{\Lnorm{2}}^2 + C_\delta \norm{v_{12}}{\Lnorm{2}}^2 \\
		& ~~~~ + C_\delta \norm{v_2}{\Hnorm{2}}^2 \norm{v_{12}'}{\Lnorm{2}}^2,\\
		& L_{10} \lesssim \int_0^1 \bigl(\varepsilon \normh{v_{12}'}{\Lnorm{4}} \normh{\nablah \xi_1'}{\Lnorm{4}} + \normh{\nablah v_{12}'}{\Lnorm{2}} \bigr) \,dz \times \int_0^1 \normh{\dz v_2}{\Lnorm{4}} \normh{v_{12}}{\Lnorm{4}} \,dz \\
		& ~~ \lesssim \bigl( \varepsilon \norm{v_{12}'}{\Lnorm{2}}^{1/2} \norm{v_{12}'}{\Hnorm{1}}^{1/2} \norm{\xi_1'}{\Hnorm{2}} + \norm{\nablah v_{12}'}{\Lnorm{2}} \bigr) \\
		& ~~~~ \times \norm{\dz v_2}{\Hnorm{1}} \norm{v_{12}}{\Lnorm{2}}^{1/2}\norm{v_{12}}{\Hnorm{1}}^{1/2} \lesssim \delta \norm{\nabla v_{12}}{\Lnorm{2}}^2 + \delta \norm{\nabla v_{12}'}{\Lnorm{2}}^2 \\
		& ~~~~ + \mathcal H(C_\delta, \varepsilon, C_0M_0, C_1M_1) \bigl( \norm{v_{12}}{\Lnorm{2}}^2 + \norm{v_{12}'}{\Lnorm{2}}^2 \bigr),\\
		& L_{11} \lesssim \varepsilon \int_0^1 \normh{\nablah v_2'}{\Lnorm{8}} \normh{\xi_{12}'}{\Lnorm{2}}  \,dz \times \int_0^1 \normh{\dz v_2}{\Lnorm{8}} \normh{v_{12}}{\Lnorm{4}} \,dz  \\
		& ~~~~ + \varepsilon \int_0^1 \normh{\xi_{12}'}{\Lnorm{2}} \normh{v_{2}'}{\Lnorm{\infty}} \,dz \times \int_0^1 \bigl( \normh{\dz v_2}{\Lnorm{\infty}} \normh{\nablah v_{12}}{\Lnorm{2}} \\
		& ~~~~ + \normh{\nablah \dz v_2}{\Lnorm{4}} \normh{v_{12}}{\Lnorm{4}} \bigr) \,dz \lesssim \varepsilon \norm{v_2'}{\Hnorm{2}} \norm{\xi_{12}'}{\Lnorm{2}} \\
		& ~~~~ \times \bigl( \norm{v_2}{\Hnorm{2}} \norm{v_{12}}{\Lnorm{2}}^{1/2} \norm{v_{12}}{\Hnorm{1}}^{1/2} + \norm{\dz v_2}{\Lnorm{2}}^{1/2} \norm{\dz v_2}{\Hnorm{2}}^{1/2} \norm{\nablah v_{12}}{\Lnorm{2}} \\
		& ~~~~ + \norm{\nabla^2 v_2}{\Lnorm{2}}^{1/2} \norm{\nabla^2 v_2}{\Hnorm{1}}^{1/2} \norm{v_{12}}{\Lnorm{2}}^{1/2} \norm{v_{12}}{\Hnorm{1}}^{1/2} \bigr) \lesssim \delta \norm{\nablah v_{12}}{\Lnorm{2}}^2 \\
		& ~~~~ +  \norm{v_{12}}{\Lnorm{2}}^2 +  \mathcal H(C_\delta,\varepsilon, C_0M_0,C_1M_1) \norm{\nabla v_2}{\Hnorm{2}} \norm{\xi_{12}'}{\Lnorm{2}}^2, \\
		& L_{12} \lesssim \norm{\xi_{12}'}{\Lnorm{2}} \bigl(  \varepsilon^{-1} e^{\varepsilon \norm{\xi_1'}{\Hnorm{2}}} \norm{\nablah v_{12}}{\Lnorm{2}} + e^{\varepsilon\norm{\xi_1'}{\Hnorm{2}}} \norm{v_{12}}{\Lnorm{3}} \norm{\nablah \xi_1'}{\Lnorm{6}}\\
		& ~~ \lesssim \delta \norm{\nabla v_{12}}{\Lnorm{2}}^2 + \mathcal H (C_\delta, \varepsilon, C_0M_0, C_1M_1) \bigl( \norm{\xi_{12}'}{\Lnorm{2}}^2 + \norm{v_{12}}{\Lnorm{2}}^2 \bigr), \\
		& L_{13} \lesssim e^{\varepsilon (\norm{\xi_1'}{\Hnorm{2}} + \norm{\xi_2'}{\Hnorm{2}})}\norm{\xi_{12}'}{\Lnorm{2}} \norm{\nablah \xi_2'}{\Lnorm{6}} \norm{v_{12}}{\Lnorm{3}}\\
		& ~~ \lesssim \delta \norm{\nabla v_{12}}{\Lnorm{2}}^2 + \mathcal H (C_\delta, \varepsilon, C_0M_0, C_1M_1) \bigl( \norm{\xi_{12}'}{\Lnorm{2}}^2 + \norm{v_{12}}{\Lnorm{2}}^2 \bigr),\\
		& L_{14} \lesssim \varepsilon e^{\varepsilon \alpha \norm{\xi_1'}{\Hnorm{2}}} \norm{\xi_1'}{\Hnorm{2}} \norm{\nabla v_{12}}{\Lnorm{2}}^2 \lesssim \varepsilon e^{\varepsilon\alpha (C_0M_0)^{1/2}} (C_0M_0)^{1/2} \\
		& ~~~~ ~~~~ \times \norm{\nabla v_{12}}{\Lnorm{2}}^2, \\
		& L_{15} \lesssim \varepsilon e^{\varepsilon\alpha \norm{\xi_1'}{\Hnorm{2}} } \norm{\nablah \xi_1'}{\Lnorm{6}} \norm{\nabla v_{12}}{\Lnorm{2}} \norm{v_{12}}{\Lnorm{3}} \lesssim \delta \norm{\nabla v_{12}}{\Lnorm{2}}^2 \\
		& ~~~~ + \mathcal H(C_\delta, \varepsilon, C_0M_0) \norm{v_{12}}{\Lnorm{2}}^2, \\
		& L_{16} \lesssim \varepsilon e^{\varepsilon\alpha (\norm{\xi_1'}{\Hnorm{2}} + \norm{\xi_2'}{\Hnorm{2}})} \norm{\xi_{12}'}{\Lnorm{2}} \norm{\nabla^2 v_2}{\Lnorm{3}} \norm{v_{12}}{\Lnorm{6}}\\
		& ~~ \lesssim \delta \norm{\nabla v_{12}}{\Lnorm{2}}^2 + \delta \norm{v_{12}}{\Lnorm{2}}^2 + \mathcal H(C_\delta, \varepsilon, C_0M_0) (\norm{\nabla v_2}{\Hnorm{2}} + 1)  \norm{\xi_{12}'}{\Lnorm{2}}^2,
	\end{align*}
	where $ \mathcal H (\cdot) $, as before, is a regular function of the arguments.
	Consequently, one has
	\begin{align*}
		& \dfrac{d}{dt} \norm{v_{12}}{\Lnorm{2}}^2 + c_1 \bigl( \mu \norm{\nablah v_{12}}{\Lnorm{2}}^2 + \lambda \norm{\dvh v_{12}}{\Lnorm{2}}^2 + \norm{\dz v_{12}}{\Lnorm{2}}^2 \bigr) \\
		& ~~ \leq ( \delta + \varepsilon e^{\varepsilon \alpha (C_0M_0)^{1/2}}(C_0M_0)^{1/2}  )  \norm{\nabla v_{12}}{\Lnorm{2}}^2 + \delta \norm{\nabla v_{12}'}{\Lnorm{2}}^2 \\
		& ~~~~ + \mathcal H(C_\delta,\varepsilon, C_0M_0,C_1M_1)  \bigl( \norm{v_{12}}{\Lnorm{2}}^2 + \norm{v_{12}'}{\Lnorm{2}}^2 + \norm{\xi_{12}'}{\Lnorm{2}}' \bigr)\\
		& ~~~~ + \mathcal H(C_\delta,\varepsilon, C_0M_0,C_1M_1) \norm{\nabla v_2}{\Hnorm{2}} \norm{\xi_{12}'}{\Lnorm{2}}^2.
	\end{align*}
	Notice that $ C_0, C_1 $ are independent of $ \varepsilon $. For  $ \varepsilon_0, \delta \in (0,1) $ small enough and $ \varepsilon \in (0,\varepsilon_0] $, applying the Gr\"onwall inequality in the above inequality yields
	\begin{equation}\label{local-102}
		\begin{aligned}
			& \sup_{0\leq t\leq T_\varepsilon} \norm{v_{12}}{\Lnorm{2}}^2 + \dfrac{c_1 \min\lbrace \mu, \lambda, 1\rbrace}{2} \int_0^{T_\varepsilon} \norm{\nabla v_{12}}{\Lnorm{2}}^2 \,dt \\
			& ~~ \leq e^{T_\varepsilon \mathcal H(C_\delta, \varepsilon, C_0M_0, C_1M_1)} \bigl( \delta \norm{\nabla v_{12}'}{\Lnorm{2}(0,T_\varepsilon;L^2(\Omega_h\times 2\mathbb T))}^2 \\
			& ~~~~ + T_\varepsilon \norm{v_{12}'}{\Lnorm{\infty}(0,T_\varepsilon; L^2(\Omega_h\times 2\mathbb T))}^2  + T_\varepsilon \norm{\xi_{12}'}{\Lnorm{\infty}(0,T_\varepsilon; L^2(\Omega_h\times 2\mathbb T))}^2 \\
			& ~~~~ + T_\varepsilon^{1/2} \norm{\nabla v_2}{\Lnorm{2}(0,T_\varepsilon; H^2(\Omega_h\times 2\mathbb T))}\mathcal H(C_\delta, \varepsilon, C_0M_0, C_1M_1) \\
			& ~~~~ \times \norm{\xi_{12}'}{\Lnorm{\infty}(0,T_\varepsilon; L^2(\Omega_h\times 2\mathbb T))}^2 \bigr).
		\end{aligned}
	\end{equation}
	Then for fixed $ \varepsilon \in (0,\varepsilon_0) $, by first choosing $ \delta $ small enough, and then $ T_\varepsilon $ small enough, \eqref{local-101} and \eqref{local-102} yield  inequality \eqref{contraction} with $ q = \dfrac 1 2 $. This finishes the proof of $ (2) $ and completes the proof of Proposition \ref{prop:local-existence}.	
\end{proof}

\subsection{Uniform stability}\label{subsec:uniform-stability}
In this section, we will show that the existence time $ T_\varepsilon $ of the strong solutions constructed in Propositive \ref{prop:local-existence} is uniform in $ \varepsilon $ provided $ \varepsilon \in (0, \varepsilon_1) $ for some $ \varepsilon_1 \in (0, \varepsilon_0) $. In order to show this, it suffices to show that there is a positive constant $ \varepsilon_1 $ such that the $ H^2 $-norms of $ (\xi, v) $ remain bounded in a time interval $ (0,T) $ with $ T \in (0,\infty) $  independent of $ \varepsilon $, provided $ \varepsilon \in (0, \varepsilon_1) $ with $ \varepsilon_1 \in (0,1) $ small enough. We perform a continuity argument in the following.

Recall that we are given initial data $ (\xi_0, v_0) \in H^2(\Omega_h\times 2\mathbb T) $, and a positive constant $ M \in (0,\infty) $ satisfying
\begin{equation*}\tag{\ref{initial-data}}
		\dfrac{1}{2}\norm{v_0}{H^2}^2 + \dfrac{c^2}{\gamma-1} \norm{\xi_0}{H^2}^2 < M.
\end{equation*}
Denote by
\begin{equation}\label{25july2018-01}
	M_0 : = \dfrac{8(\gamma-1)}{c^2} M, ~ M_1 : = 4M.
\end{equation}
Then it is obvious that
\begin{equation*}
		\norm{\xi_0}{H^2}^2 < M_0 , ~ \norm{v_0}{H^2}^2 < M_1.
\end{equation*}
From Proposition \ref{prop:local-existence}, there is a strong solution $ (\xi, v) $ satisfying \eqref{le-regularity} and \eqref{le-priori-estimate} in the time interval $ [0, T_\varepsilon] $, for some $ T_\varepsilon \in (0,\infty) $. Then for any $  t \in [0, T_\varepsilon] $, we have
\begin{equation}\label{April302019}
	\norm{\xi(t)}{H^2}^2 \leq C_0 M_0 = \dfrac{8(\gamma-1)}{c^2} C_0 M, ~\norm{v(t)}{H^2}^2 \leq C_1 M_1 = 4 C_1 M,
\end{equation}
where $ C_0, C_1 $ are given in Proposition \ref{prop:local-existence}. We remind readers that $ T_\varepsilon $ depends on $ M_0, M_1 $ and $ \varepsilon $. On the other hand, consider $ \varepsilon_1 $ satisfying
\begin{equation*}
	\begin{gathered}
		\sup_{0\leq t\leq T_{\varepsilon}} \max\lbrace 1, \alpha \rbrace \varepsilon_1  \norm{\xi(t)}{H^2} \leq \max\lbrace 1, \alpha \rbrace  \varepsilon_1 \times \bigl(C_0M_0 \bigr)^{1/2} \leq \log 2, \\
		\sup_{0\leq t\leq T_{\varepsilon}} \varepsilon_1 \mathcal H_1( \norm{\xi(t)}{H^2},\norm{v(t)}{H^2} ) \leq \varepsilon_1 \mathcal H_1 ( C_0 M_0, C_1M_1) \leq \dfrac{\min\lbrace \mu, \lambda, 1 \rbrace c_1}{8},
	\end{gathered}
\end{equation*}
where $ \mathcal H_1 $ is as in the right-hand side of \eqref{ue:spatial-derivative}.
Then
the {\it a priori} estimate in \eqref{ue:spatial-derivative} implies that, for $ \varepsilon \in (0, \varepsilon_1) $ and $ t \in [0,T_\varepsilon] $,
\begin{align*}
	&  \dfrac{1}{2} \norm{v(t)}{H^2}^2 +  \dfrac{c^2}{4(\gamma-1)} \norm{\xi(t)}{H^2}^2  + \dfrac{\min\lbrace \mu,\lambda,1 \rbrace c_1}{2} \int_0^t \norm{\nabla v(s)}{H^2}^2 \,ds \\
	& ~~ \leq \dfrac{1}{2} \norm{v_0}{H^2}^2 +  \dfrac{c^2}{\gamma-1}  \norm{\xi_0}{H^2}^2  + \delta \int_0^t \norm{\nabla v(s)}{H^2}^2 \,ds \\
	& ~~~~  + \dfrac{\min\lbrace \mu,\lambda,1\rbrace c_1}{8} \int_0^t  \norm{\nabla v(s)}{H^2}^2 \,ds  + C_\delta \int_0^t \mathcal H_2(C_0M_0, C_1M_1) \,ds.
\end{align*}
Hence after substituting \eqref{initial-data} and choosing
\begin{equation}\label{local-existence-time}
	\delta = \delta' := \dfrac{\min\lbrace \mu,\lambda,1\rbrace c_1}{8}, ~ \text{and} ~ T : = \dfrac{M}{C_{\delta'} \mathcal H_2 (C_0M_0,C_1M_1)},
\end{equation}
it follows
\begin{equation}\label{local-uniform-estimate}
\begin{aligned}
	&  \dfrac{1}{2} \norm{v(t)}{H^2}^2 +  \dfrac{c^2}{4(\gamma-1)} \norm{\xi(t)}{H^2}^2 + \dfrac{\min\lbrace \mu,\lambda,1 \rbrace c_1}{4} \int_0^t \norm{\nabla v(s)}{H^2}^2 \,ds \\
	& ~~~~ \leq M + C_{\delta'} \mathcal H_2 (C_0M_0,C_1M_1) t < 2M,
\end{aligned}
\end{equation}
for \begin{equation*}
t \in [0, \min\lbrace T_\varepsilon, T\rbrace ]. 
\end{equation*}
Notice that $ T $ is independent of $ \varepsilon $ once $ \varepsilon \in(0, \varepsilon_1) $.  Also from \eqref{local-uniform-estimate}, we have
\begin{equation}\label{25july2018-02}
	\begin{gathered}
		\sup_{0\leq t \leq \min\lbrace T_\varepsilon, T\rbrace }\norm{\xi(t)}{H^2}^2 < \dfrac{8(\gamma-1)}{c^2} M = M_0 ,\\
	 	\sup_{0\leq t \leq \min\lbrace T_\varepsilon, T\rbrace } \norm{v(t)}{H^2}^2 < 4 M = M_1, \\
	 	\int_0^{\min\lbrace T_\varepsilon, T \rbrace} \norm{\nabla v(t)}{\Hnorm{2}}^2 \,dt \leq \dfrac{8M}{c_1 \min\lbrace \mu, \lambda, 1\rbrace} = : M_2.
	 \end{gathered}
\end{equation}
Therefore, supposed that  $ T_\varepsilon < T $, we will have $$ \norm{\xi(T_\varepsilon)}{H^2}^2 < M_0 ,   \norm{v(T_\varepsilon)}{H^2}^2 < M_1. $$
Then by setting $ (\xi(T_\varepsilon), v(T_\varepsilon) ) $ as the new initial data, applying Proposition \ref{prop:local-existence} again,  estimate \eqref{April302019} holds in $ [0, 2T_\varepsilon] $. Thus the arguments between \eqref{April302019} and \eqref{local-uniform-estimate} hold with $ T_\varepsilon $ replaced by $ 2 T_\varepsilon $, without needing to choose the smallness of $ \varepsilon_1 $ and $ T $. Then  estimate \eqref{local-uniform-estimate} holds in the time interval $ [0, \min\lbrace 2T_\varepsilon, T\rbrace ] $. Repeat such arguments $ n $ times, $ n \in \mathbb Z^+ $, until $ n T_\varepsilon \geq T $. This extends the existence time of the local strong solution $ (\xi, v) $ to \eqref{eq:perturbation} to $ T> 0 $, which is independent of $ \varepsilon $ provided $ \varepsilon \in (0,\varepsilon_1) $ with $ \varepsilon_1 $ given as above. Consequently, we conclude that:
\begin{prop}\label{prop:uniform-stability}
	Consider $ (\xi_0,v_0) \in H^2(\Omega_h\times 2\mathbb T) $  with the compatibility conditions in \eqref{cmpbc}, bounded as in \eqref{initial-data}. Then there is a positive constant $ \varepsilon_1 \in (0,1) $ such that for any $ \varepsilon \in (0,\varepsilon_1) $, there is a unique strong solution to \eqref{eq:perturbation} in the time interval $ [0,T] $ satisfying the regularity \eqref{le-regularity}. Here $ T \in (0,\infty) $ is a positive time which is independent of $ \varepsilon$. Moreover, there are positive constants $ M_0, M_1, M_2 \in (0,\infty) $, given in \eqref{25july2018-01} and \eqref{25july2018-02} and independent on $ \varepsilon $, such that
	\begin{equation}\label{uniform-stability}
	\begin{gathered}
		\norm{\xi}{L^\infty(0,T;\Hnorm{2}(\Omega_h\times 2\mathbb T))}^2 < M_0, ~ \norm{v}{L^\infty(0,T;\Hnorm{2}(\Omega_h\times 2\mathbb T))}^2 < M_1,\\
		\norm{\nabla v}{L^2(0,T;\Hnorm{2}(\Omega_h\times 2\mathbb T))}^2 < M_2.
	\end{gathered}
	\end{equation}
\end{prop}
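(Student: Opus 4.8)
The plan is to upgrade the $\varepsilon$-dependent local existence of Proposition~\ref{prop:local-existence} to existence on a common time interval by feeding the uniform {\it a priori} bound \eqref{ue:spatial-derivative} of Proposition~\ref{prop:apriori-estimate} into a continuity (bootstrap) argument. First I would fix the bootstrap radii $M_0 := \frac{8(\gamma-1)}{c^2}M$ and $M_1 := 4M$ as in \eqref{25july2018-01}, so that \eqref{initial-data} forces $\norm{\xi_0}{H^2}^2 < M_0$ and $\norm{v_0}{H^2}^2 < M_1$; Proposition~\ref{prop:local-existence} then yields, for every $\varepsilon\in(0,\varepsilon_0)$, a unique strong solution $(\xi,v)$ on some interval $[0,T_\varepsilon]$ obeying $\sup_{[0,T_\varepsilon]}\norm{\xi}{H^2}^2\le C_0M_0$ and $\sup_{[0,T_\varepsilon]}\norm{v}{H^2}^2\le C_1M_1$ with $C_0,C_1$ independent of $\varepsilon$.

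Next I would choose $\varepsilon_1\in(0,\varepsilon_0)$ so small that along any such solution $\max\{1,\alpha\}\varepsilon_1(C_0M_0)^{1/2}\le\log 2$ --- which pins all the exponential weights $e^{\pm\varepsilon\xi}$ to within a factor $2$ of unity --- and $\varepsilon_1\mathcal H_1(C_0M_0,C_1M_1)\le\frac{1}{8}\min\{\mu,\lambda,1\}c_1$. Under these restrictions the two dangerous terms on the right-hand side of \eqref{ue:spatial-derivative}, namely $\delta\int_0^t\norm{\nabla v}{H^2}^2$ and $\varepsilon\int_0^t\mathcal H_1\norm{\nabla v}{H^2}^2$, are absorbed by the dissipative term on the left (after also taking $\delta$ comparably small), while $\mathcal H_2$ is evaluated at the frozen arguments $(C_0M_0,C_1M_1)$; this produces an inequality of the shape
\[
\tfrac12\norm{v(t)}{H^2}^2+\tfrac{c^2}{4(\gamma-1)}\norm{\xi(t)}{H^2}^2+\tfrac{\min\{\mu,\lambda,1\}c_1}{4}\int_0^t\norm{\nabla v}{H^2}^2\,ds\le M+C_{\delta'}\mathcal H_2(C_0M_0,C_1M_1)\,t,
\]
whose right side is bounded by a constant that does not see $\varepsilon$. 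Setting $T:=M/\bigl(C_{\delta'}\mathcal H_2(C_0M_0,C_1M_1)\bigr)$ shows that on $[0,\min\{T_\varepsilon,T\}]$ the solution stays strictly inside the bootstrap region, i.e.\ $\norm{\xi}{H^2}^2<M_0$, $\norm{v}{H^2}^2<M_1$, and $\int_0^{\min\{T_\varepsilon,T\}}\norm{\nabla v}{H^2}^2<M_2$.

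Finally I would close the continuation. If $T_\varepsilon<T$, then $(\xi(T_\varepsilon),v(T_\varepsilon))$ still satisfies the strict bounds $\norm{\xi(T_\varepsilon)}{H^2}^2<M_0$, $\norm{v(T_\varepsilon)}{H^2}^2<M_1$ (and $\dz\xi(T_\varepsilon)=0$, with $v(T_\varepsilon)$ even in $z$), so Proposition~\ref{prop:local-existence} applied to this new datum extends the solution to $[0,2T_\varepsilon]$; since $\varepsilon_1$, $\delta'$ and $T$ were fixed in terms of $M_0,M_1$ alone, the very same absorption-plus-Gr\"onwall step applies verbatim on $[0,\min\{2T_\varepsilon,T\}]$ with no further loss. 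Iterating until $nT_\varepsilon\ge T$ (finitely many steps, the number depending on $\varepsilon$ but not on the bounds) delivers the solution on all of $[0,T]$ together with \eqref{uniform-stability}; uniqueness is inherited from the contraction estimate established in Proposition~\ref{prop:local-existence}. The one place demanding care --- and the only genuine obstacle --- is the internal consistency of the bootstrap: one must check that the local constants $C_0,C_1$ are compatible with, indeed strictly dominated by, the bounds coming out of \eqref{ue:spatial-derivative}, so that each continuation strictly preserves the open constraints $\norm{\xi}{H^2}^2<M_0$ and $\norm{v}{H^2}^2<M_1$ rather than merely the closed ones, and that absorbing the $\delta$- and $\varepsilon\mathcal H_1$-terms really leaves a net positive share of dissipation on the left.
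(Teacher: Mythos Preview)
Your proposal is correct and follows essentially the same approach as the paper: the same bootstrap radii $M_0,M_1$, the same smallness conditions on $\varepsilon_1$ to tame the exponential weights and the $\varepsilon\mathcal H_1$-term, the same choice $T=M/(C_{\delta'}\mathcal H_2(C_0M_0,C_1M_1))$, and the same restart-and-iterate continuation until $nT_\varepsilon\ge T$. Your closing remark about the strict inequalities being preserved so that each restart lands back inside the open bootstrap region is exactly the point the paper relies on (and you even make explicit the preservation of the compatibility conditions, which the paper leaves implicit).
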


\section{The limit equations}\label{sec:limit-equation}
In this section, we will identify the limit equations of \eqref{eq:perturbation} as $ \varepsilon \rightarrow 0^+ $ in the distribution sense.
Recall that,
\begin{equation*}\tag{\ref{eq:perturbation}}
	\begin{cases}
		\dt \xi_\varepsilon + v_\varepsilon\cdot\nablah \xi_\varepsilon + \dfrac{\gamma-1}{\varepsilon} ( \dvh v_\varepsilon+ \dz w_\varepsilon ) = 0 &\text{in} ~ \Omega_h \times 2 \mathbb T,\\
		\dt v_\varepsilon + v_\varepsilon \cdot \nablah v_\varepsilon + w_\varepsilon \dz v_\varepsilon + \dfrac{c^2 e^{\varepsilon \xi_\varepsilon}}{\varepsilon} \nablah \xi_\varepsilon \\
		~~~~ = c_1 e^{-\varepsilon \alpha\xi_\varepsilon} \bigl( \mu \deltah v_\varepsilon + \lambda \nablah \dvh v_\varepsilon + \partial_{zz} v_\varepsilon) & \text{in} ~ \Omega_h\times 2\mathbb T, \\
		\dz \xi_\varepsilon = 0 & \text{in} ~ \Omega_h\times 2\mathbb T, \\
		w_\varepsilon = 0 & \text{on} ~ \Omega_h \times \mathbb Z,
	\end{cases}
\end{equation*}
where the initial data are taken as $ (\xi_\varepsilon, v_\varepsilon)\bigr|_{t=0} = (\xi_0, v_0) \in H^2(\Omega_h\times 2\mathbb T) $ satisfying the compatibility conditions in \eqref{cmpbc}, bounded as in \eqref{initial-data}.
We first list the uniform estimates obtained in the previous section.
In fact, from \eqref{uniform-stability} and \eqref{ue:2001}--\eqref{ue:204}, the following estimates hold uniformly in $ \varepsilon \in (0,\varepsilon_1) $ for some constant $ C, T \in (0,\infty) $ independent of $ \varepsilon $ and depending only on the initial data,
\begin{equation} \label{ue:total}
	\begin{gathered}
		\sup_{0 \leq t \leq T} \bigl\lbrace \norm{v_\varepsilon(t)}{H^2} + \norm{\xi_\varepsilon(t)}{H^2}+ \norm{\dt (e^{\varepsilon\xi_\varepsilon})(t)}{H^1} + \norm{\dt \mathcal P_\sigma v_\varepsilon(t)}{\Lnorm2} \bigr\rbrace \\
		+ \norm{\nabla v_\varepsilon}{L^2(0,T;H^2)} + \norm{\dt \mathcal P_\sigma v_\varepsilon}{L^2(0,T;H^1)} < C.
	\end{gathered}
\end{equation}
Then as $ \varepsilon \rightarrow 0^+ $, there exist $ \xi_p, v_p, \zeta_p, v_{p,\sigma} $ with
\begin{equation}\label{limit:functions}
	\begin{gathered}
		\xi_p \in L^\infty(0,T;H^2), ~~ v_p \in L^\infty(0,T;H^2) \cap L^2(0,T;H^3), \\
		\zeta_p \in L^\infty(0,T;H^1), ~~ v_{p,\sigma} \in L^\infty(0,T;H^2_\sigma) \cap L^2(0,T;H^3_\sigma),\\
		\dt v_{p,\sigma} \in L^\infty(0,T;L^2_\sigma) \cap L^2(0,T;H^1_\sigma),
	\end{gathered}
\end{equation}
such that, 
\begin{equation}\label{limit:001}
	\begin{aligned}
		& \xi_{\varepsilon} \buildrel\ast\over\rightharpoonup \xi_p ~~ \text{weak-$\ast$ in} ~ L^\infty(0,T;H^2), \\
		& v_\varepsilon ~ (\text{and} ~ \mathcal P_\sigma v_\varepsilon) \buildrel\ast\over\rightharpoonup v_p ~ (\text{respectively} ~ v_{p,\sigma}) \\
		& ~~~~ ~~~~ \text{weak-$\ast$ in} ~ L^\infty(0,T;H^2) ~ (\text{respectively} ~ L^\infty(0,T;H^2_\sigma)),\\
		& v_\varepsilon~ (\text{and} ~ \mathcal P_\sigma v_\varepsilon) \rightharpoonup v_p~ (\text{respectively} ~ v_{p,\sigma}) \\
	 	& ~~~~ ~~~~ \text{weakly in} ~ L^2(0,T;H^3) ~ (\text{respectively} ~ L^2(0,T;H^3_\sigma)), \\
		& \dt(e^{\varepsilon \xi_\varepsilon}) \buildrel\ast\over\rightharpoonup \zeta_p ~~ \text{weak-$\ast$ in} ~ L^\infty(0,T;H^1), \\
		& \dt \mathcal P_\sigma v_\varepsilon \buildrel\ast\over\rightharpoonup \dt v_{p,\sigma} ~~ \text{weak-$\ast$ in} ~ L^\infty(0,T;L^2_\sigma), \\
		& \dt \mathcal P_\sigma v_\varepsilon \rightharpoonup \dt v_{p,\sigma}  ~~ \text{weakly in} ~ L^2(0,T;H^1_\sigma).
	\end{aligned}
\end{equation}
Also applying the Aubin compactness lemma (see, e.g., \cite[Theorem 2.1]{Temam1984} and \cite{Simon1986,Chen2012}) yields, as $ \varepsilon \rightarrow 0^+ $,
\begin{equation}\label{limit:002}
		\mathcal P_\sigma v_\varepsilon \rightarrow v_{p,\sigma} ~~ \text{in} ~ L^2(0,T;H^2_{\sigma,loc})\cap C([0,T];H^1_{\sigma,loc}).
\end{equation}
Moreover, from \eqref{ue:total}, we have
\begin{equation}\label{limit:003}
	\varepsilon \norm{\xi_\varepsilon}{H^2} < \varepsilon C \rightarrow 0 ~~~ \text{as} ~ \varepsilon \rightarrow 0^+.
\end{equation}
Thus we have, as $ \varepsilon \rightarrow 0^+ $,
\begin{equation}\label{limit:004}
\begin{gathered}
	e^{\varepsilon \xi_\varepsilon} \rightarrow 1 ~~ \text{in} ~ L^\infty(0,T;H^2), \\
	~~ \text{and} ~~ \dt(e^{\varepsilon \xi_\varepsilon}) \rightarrow 0 = \zeta_p ~~ \text{in the sense of distribution}.
\end{gathered}
\end{equation}
On the other hand, recall that the vertical velocity is represented by 
\begin{equation*}\tag{\ref{def:vertical-velocity}}
	w_\varepsilon = - \int_0^z \biggl( \dfrac{\varepsilon}{\gamma-1} \widetilde v_\varepsilon \cdot\nablah \xi_\varepsilon + \dvh \widetilde v_\varepsilon \biggr) \,dz.
\end{equation*}
Therefore, a direct calculation yields that
$ w_\varepsilon, \dz w_\varepsilon \in L^\infty(0,T;H^1)$ and we have the following uniform bounds:
\begin{equation}
\label{ue:w}
\begin{aligned}
&  \norm{w_\varepsilon}{L^\infty(0,T;H^1)} + \norm{\dz w_\varepsilon}{L^\infty(0,T;H^1)} \\
& ~~~~ \lesssim \varepsilon \norm{v_\varepsilon}{L^\infty(0,T;H^2)} \norm{\xi_\varepsilon}{L^\infty(0,T;H^2)} + \norm{v_\varepsilon}{L^\infty(0,T;H^2)} < C. \end{aligned} \end{equation}
Therefore, there exists \begin{equation}\label{limit:vertical-velocity} w_p \in L^\infty(0,T;H^1) ~~~~ \text{with} ~ \dz w_p \in L^\infty(0,T;H^1) \end{equation} such that,
\begin{equation}\label{limit:006}
	w_\varepsilon ~ (\text{and} ~ \dz w_\varepsilon) \buildrel\ast\over\rightharpoonup w_p ~(\text{respectively} ~ \dz w_p) ~~ \text{weak-$\ast$ in} ~ L^\infty(0,T;H^1),
\end{equation}
as $ \varepsilon \rightarrow 0^+ $.
Notice, after applying the trace theorem,
$ \lbrace w_\varepsilon|_{z\in \mathbb Z}\rbrace_{\varepsilon \in (0, \varepsilon_1)} \subset L^\infty(0,T;H^{1/2}(\Omega_h)) $ and is uniformly bounded. Then it follows from \eqref{def:vertical-velocity} that  $ w_\varepsilon|_{z \in \mathbb Z} = 0 $ and hence $ w_p|_{z\in \mathbb Z} = 0 $. Moreover, after multiplying \subeqref{eq:perturbation}{1} with $ \varepsilon e^{\varepsilon\xi_\varepsilon} $, one has
\begin{equation}\label{limit:10001}
	\dt e^{\varepsilon\xi_\varepsilon} + \varepsilon e^{\varepsilon \xi_\varepsilon} v_\varepsilon \cdot \nablah \xi_\varepsilon + (\gamma-1)e^{\varepsilon \xi_\varepsilon} ( \dvh v_\varepsilon + \dz w_\varepsilon) = 0.
\end{equation}
Then, \eqref{ue:total} and \eqref{limit:004} imply that as $ \varepsilon \rightarrow 0^+ $, \eqref{limit:10001} converges to
\begin{equation*}
	\dvh v_p + \dz w_p = 0 ~~~ \text{in the sense of distribution}.
\end{equation*}
Consequently, we have shown that
\begin{equation}\label{limit:continuous-eq}
	\dvh v_p + \dz w_p = 0, ~~ \text{and} ~~ w_p\bigr|_{z\in \mathbb Z} = 0.
\end{equation}

Next, we will identify the limit equation of the momentum equation \subeqref{eq:perturbation}{2}. First,  \eqref{limit:functions} and \eqref{limit:continuous-eq} imply that $ v_p \in L^\infty(0,T;H^2_\sigma)\cap L^2(0,T;H^3_\sigma) $. We first show that $ v_{p,\sigma} \equiv v_p $. Let $ u \in C_0^\infty(0,T; C^\infty_\sigma(\Omega_h\times 2 \mathbb T, \mathbb R^2)) $. Then we have, from \eqref{limit:001},
\begin{align*}
	& \int_0^T \int u \cdot v_p \idx \,dt  =\lim\limits_{\varepsilon \rightarrow 0^+} \int_0^T \int u \cdot v_\varepsilon \idx \,dt \\
	& ~~~~ = \lim\limits_{\varepsilon \rightarrow 0^+} \int_0^T  \int u \cdot \mathcal P_\sigma v_\varepsilon \idx \,dt =\int_0^T \int u \cdot v_{p,\sigma}\idx \,dt,
\end{align*}
which shows that $ v_{p,\sigma} \equiv v_p $. Then \eqref{limit:002} can be written as
\begin{equation}\label{limit:007}
	\mathcal P_\sigma v_\varepsilon \rightarrow v_p  ~~ \text{in} ~ L^2(0,T;H^2_{\sigma,loc})\cap C([0,T];H^1_{\sigma,loc}),
\end{equation}
as $ \varepsilon \rightarrow 0^+ $.
In particular, $ v_p(t=0) = \mathcal P_\sigma v_0 $.
Moreover, from \eqref{limit:001}, since $ v_{p,\sigma} \equiv v_p $, we have the following: as $ \varepsilon \rightarrow 0^+ $,
\begin{equation}\label{limit:008}
	\begin{gathered}
	\mathcal P_\tau v_\varepsilon = v_\varepsilon - \mathcal P_\sigma v_\varepsilon \buildrel\ast\over\rightharpoonup 0 ~~ \text{weak-$\ast$ in} ~ L^\infty(0,T;H^2_\tau),\\
	\mathcal P_\tau v_\varepsilon = v_\varepsilon - \mathcal P_\sigma v_\varepsilon \rightharpoonup 0 ~~ \text{weakly in} ~ L^2(0,T;H^3_\tau).
	\end{gathered}
\end{equation}
Also, denote $ \mathcal P_\tau v_\varepsilon = \nablah \psi_\varepsilon $ with $ \psi_\varepsilon $ defined by the following elliptic problem (see, e.g., \eqref{def:projection-poission-equation}),
\begin{equation}\label{limit:potential-function}
\begin{gathered}
	\deltah \psi_\varepsilon = \int_0^1 \dvh v_\varepsilon \,dz  ~~ \text{in} ~ \Omega_h , \\
	\begin{cases}
	\lim\limits_{\abs{(x,y)}{} \rightarrow \infty} \psi_\varepsilon = 0 & \text{in the case when} ~ \Omega_h = \mathbb R^2, \\
	\int_{\Omega_h} \psi_\varepsilon \idxh = 0 & \text{in the case when} ~ \Omega_h = \mathbb T^2.	
	\end{cases}
\end{gathered}
\end{equation}
Then $ \mathcal P_\tau \dt v_\varepsilon = \nablah \dt \psi_\varepsilon $ and we have the identity:
\begin{align*}
	& \mathcal P_\tau v_\varepsilon \cdot \nablah \mathcal P_\tau v_\varepsilon + w_\varepsilon \dz \mathcal P_\tau v_\varepsilon  = \nablah \psi_\varepsilon \cdot \nablah \nablah \psi_\varepsilon + w_\varepsilon \dz \nablah \psi_\varepsilon \\
	& ~~~~ = \dfrac{1}{2} \nablah \abs{\nablah \psi_\varepsilon}{2}.
\end{align*}
Therefore, one has
\begin{align*}
	& v_\varepsilon \cdot\nablah v_\varepsilon + w_\varepsilon \dz v_\varepsilon = \mathcal P_\sigma v_\varepsilon \cdot \nablah v_\varepsilon + \mathcal P_\tau v_\varepsilon \cdot \nablah \mathcal P_\sigma v_\varepsilon + w_\varepsilon \dz \mathcal P_\sigma v_\varepsilon \\
	& ~~~~ + \mathcal P_\tau v_\varepsilon \cdot \nablah \mathcal P_\tau v_\varepsilon + w_\varepsilon \dz \mathcal P_\tau v_\varepsilon = \mathcal P_\sigma v_\varepsilon \cdot \nablah v_\varepsilon + \mathcal P_\tau v_\varepsilon \cdot \nablah \mathcal P_\sigma v_\varepsilon \\
	& ~~~~ + w_\varepsilon \dz \mathcal P_\sigma v_\varepsilon + \dfrac{1}{2} \nablah \abs{\nablah \psi_\varepsilon}{2} .
\end{align*}
%
Then
\subeqref{eq:perturbation}{2} can be written as
\begin{equation}\label{perturbation-eq-in-decomposed}
\begin{aligned}
	& \dt \mathcal P_\sigma v_\varepsilon + \mathcal P_\sigma v_\varepsilon \cdot \nablah v_\varepsilon + \mathcal P_\tau v_\varepsilon \cdot \nablah \mathcal P_\sigma v_\varepsilon + w_\varepsilon \dz \mathcal P_\sigma v_\varepsilon \\
	& ~~~~ - c_1 e^{-\varepsilon \alpha \xi_\varepsilon}( \mu \deltah v_\varepsilon + \lambda \nablah \dvh v_\varepsilon  + \partial_{zz} v_\varepsilon ) + c^2 \bigl( \dfrac{e^{\varepsilon\xi_\varepsilon} - 1}{\varepsilon} - \xi_\varepsilon \bigr)\nablah \xi_h \\
	&  = - \bigl( \nablah \dt \psi_\varepsilon + \dfrac{1}{2} \nablah \abs{\nablah \psi_\varepsilon}{2} + \dfrac{c^2}{\varepsilon} \nablah \xi_\varepsilon + \dfrac{c^2}{2}\nablah\abs{\xi_\varepsilon}{2} \bigr).
\end{aligned}
\end{equation}
Then \eqref{ue:total}, \eqref{ue:w} and \eqref{perturbation-eq-in-decomposed} imply that
\begin{align*}
& \normh{\nablah \dt \psi_\varepsilon + \dfrac{1}{2} \nablah \abs{\nablah \psi_\varepsilon}{2}+ \dfrac{c^2}{\varepsilon} \nablah \xi_\varepsilon + \dfrac{c^2}{2}\nablah\abs{\xi_\varepsilon}{2}}{L^\infty(0,T;L^2)\cap L^2(0,T;H^1)}\\
& ~~~~ = \normh{\text{left-hand side of \eqref{perturbation-eq-in-decomposed}}}{L^\infty(0,T;L^2)\cap L^2(0,T;H^1)} < C,
\end{align*}
where $ C $ is independent of $ \varepsilon $. Hence $ \exists  P \in L^\infty (0,T;H^1(\Omega_h)) \cap L^2(0,T;H^2(\Omega_h)) $ such that
\begin{equation}
\label{limit:pressure}
\begin{gathered}
\nablah \dt \psi_\varepsilon + \dfrac{1}{2} \nablah \abs{\nablah \psi_\varepsilon}{2}+ \dfrac{c^2}{\varepsilon} \nablah \xi_\varepsilon + \dfrac{c^2}{2}\nablah\abs{\xi_\varepsilon}{2} \\
\buildrel\ast\over\rightharpoonup \nablah P ~~ \text{weak-$\ast$ in} ~ L^\infty(0,T;L^2), \\
\nablah \dt \psi_\varepsilon + \dfrac{1}{2} \nablah \abs{\nablah \psi_\varepsilon}{2}+ \dfrac{c^2}{\varepsilon} \nablah \xi_\varepsilon + \dfrac{c^2}{2}\nablah\abs{\xi_\varepsilon}{2}\\
\rightharpoonup \nablah P ~~ \text{weakly in} ~ L^2(0,T;H^1).
\end{gathered}
\end{equation}
Also, using \eqref{ue:total}, we have
\begin{equation}\label{limit:pressure-2}
\normh{\dfrac{e^{\varepsilon\xi_\varepsilon} - 1}{\varepsilon} - \xi_\varepsilon}{\Lnorm{\infty}} \lesssim \varepsilon \normh{\xi_\varepsilon}{\Hnorm{2}} \rightarrow 0 ~~~~ \text{as} ~ \varepsilon \rightarrow 0^+.
\end{equation}
Then the weak and strong convergences in \eqref{limit:001}, \eqref{limit:004}, \eqref{limit:006}, \eqref{limit:007}, \eqref{limit:008}, \eqref{limit:pressure}, and \eqref{limit:pressure-2} yield that, as $ \varepsilon \rightarrow 0^+ $, \eqref{perturbation-eq-in-decomposed} converges to
\begin{equation}\label{limit:momentum-eq}
\dt v_p + v\cdot \nablah v_p + w_p\dz v_p + \nablah P = c_1 \bigl(\mu \deltah v_p + \lambda\nablah \dvh v_p +\partial_{zz}v_p \bigr),
\end{equation}
in the sense of distribution,
where $ w_p $ is determined by \eqref{limit:continuous-eq}, and $ \dz P = 0 $.

To conclude, we have shown the following:
\begin{prop}\label{prop:convergence-eq-distribution}
Let $ (\xi_0, v_0) \in H^2(\Omega_h\times 2\mathbb T) $ satisfying the compatibility conditions in \eqref{cmpbc} and $ \varepsilon \in (0,\varepsilon_1) $ with $ \varepsilon_1 $ given in Proposition \ref{prop:uniform-stability}. Then the unique strong solutions $\lbrace (\xi_{\varepsilon}, v_\varepsilon) \rbrace_{\varepsilon \in (0,\varepsilon_1)} $ obtained in Proposition \ref{prop:uniform-stability} in the time interval $ [0,T] $, $ T \in (0,\infty) $, and $ w_\varepsilon $ given as in \eqref{def:vertical-velocity}, converge to $ (\xi_p, v_p) $ and $ w_p $, with the convergences in \eqref{limit:001}, \eqref{limit:004}, \eqref{limit:006}, \eqref{limit:007} and \eqref{limit:008}, as  $ \varepsilon \rightarrow 0^+ $.
$ \xi_p, v_p, w_p $ satisfy the regularity \eqref{limit:functions} and \eqref{limit:vertical-velocity}.
 Also, $ (v_p, w_p) $ satisfies the primitive equations \eqref{eq:primitive-eq}
with the initial data $ v_p|_{t=0} = v_{p,0} = \mathcal P_\sigma v_0 \in H^2_\sigma(\Omega_h\times 2\mathbb T) $.
\end{prop}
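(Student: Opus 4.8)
The plan is to deduce the limit system entirely from the $\varepsilon$-uniform bounds of Proposition~\ref{prop:uniform-stability}, by combining weak-$*$ compactness with the Aubin--Lions lemma and a decomposition of the momentum equation that collects every singular-in-$\varepsilon$ contribution into a single pressure gradient. Concretely, I would first assemble the uniform estimates: from \eqref{uniform-stability} together with \eqref{ue:2001}--\eqref{ue:204} one controls, uniformly in $\varepsilon\in(0,\varepsilon_1)$, the norms $\norm{\xi_\varepsilon}{L^\infty(0,T;\Hnorm2)}$, $\norm{v_\varepsilon}{L^\infty(0,T;\Hnorm2)}$, $\norm{\nabla v_\varepsilon}{L^2(0,T;\Hnorm2)}$, $\norm{\dt(e^{\varepsilon\xi_\varepsilon})}{L^\infty(0,T;\Hnorm1)}$ and $\norm{\dt\mathcal P_\sigma v_\varepsilon}{L^\infty(0,T;\Lnorm2)\cap L^2(0,T;\Hnorm1)}$, while the representation \eqref{def:vertical-velocity} yields a uniform bound on $w_\varepsilon$ and $\dz w_\varepsilon$ in $L^\infty(0,T;\Hnorm1)$.

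Banach--Alaoglu then produces a subsequence along which $\xi_\varepsilon$, $v_\varepsilon$, $\mathcal P_\sigma v_\varepsilon$, $\dt\mathcal P_\sigma v_\varepsilon$, $\dt(e^{\varepsilon\xi_\varepsilon})$ and $w_\varepsilon$ converge weakly-$*$ to limits $\xi_p$, $v_p$, $v_{p,\sigma}$, $\dt v_{p,\sigma}$, $\zeta_p$, $w_p$ in the spaces of \eqref{limit:functions}, and the Aubin--Lions lemma applied to $\mathcal P_\sigma v_\varepsilon$ (bounded in $L^\infty H^2$ with time derivative bounded in $L^2 H^1$) upgrades this to $\mathcal P_\sigma v_\varepsilon\to v_{p,\sigma}$ strongly in $L^2(0,T;H^2_{\sigma,loc})\cap C([0,T];H^1_{\sigma,loc})$. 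Since $\varepsilon\norm{\xi_\varepsilon}{\Hnorm2}\leq\varepsilon C\to0$, one gets $e^{\varepsilon\xi_\varepsilon}\to1$ in $L^\infty(0,T;H^2)$ and hence $\zeta_p=0$ and $e^{-\varepsilon\alpha\xi_\varepsilon}\to1$. To get the divergence constraint I would multiply \subeqref{eq:perturbation}{1} by $\varepsilon e^{\varepsilon\xi_\varepsilon}$, obtaining \eqref{limit:10001}; the first two terms are $O(\varepsilon)$ in $L^\infty L^2$ while $e^{\varepsilon\xi_\varepsilon}(\dvh v_\varepsilon+\dz w_\varepsilon)\to\dvh v_p+\dz w_p$ in $\mathcal D'$, so $\dvh v_p+\dz w_p=0$, and the trace theorem applied to \eqref{def:vertical-velocity} gives $w_\varepsilon|_{z\in\mathbb Z}=0$, hence $w_p|_{z\in\mathbb Z}=0$. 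In particular $v_p\in L^\infty(0,T;H^2_\sigma)\cap L^2(0,T;H^3_\sigma)$; testing $v_\varepsilon$ and $\mathcal P_\sigma v_\varepsilon$ against divergence-free test functions shows $v_{p,\sigma}\equiv v_p$, and therefore $\mathcal P_\tau v_\varepsilon=v_\varepsilon-\mathcal P_\sigma v_\varepsilon\rightharpoonup0$ and $v_p|_{t=0}=\mathcal P_\sigma v_0$.

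The core step is passing to the limit in the momentum equation. Writing $\mathcal P_\tau v_\varepsilon=\nablah\psi_\varepsilon$ with $\psi_\varepsilon$ as in \eqref{limit:potential-function}, the identity $\mathcal P_\tau v_\varepsilon\cdot\nablah\mathcal P_\tau v_\varepsilon+w_\varepsilon\dz\mathcal P_\tau v_\varepsilon=\tfrac12\nablah\abs{\nablah\psi_\varepsilon}{2}$ lets me recast \subeqref{eq:perturbation}{2} in the form \eqref{perturbation-eq-in-decomposed}, in which the $\varepsilon^{-1}\nablah\xi_\varepsilon$ term, $\nablah\dt\psi_\varepsilon$ and $\tfrac12\nablah\abs{\nablah\psi_\varepsilon}{2}$ are all gathered into a single horizontal gradient. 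By \eqref{ue:total} and \eqref{ue:w} the left-hand side of \eqref{perturbation-eq-in-decomposed} is bounded in $L^\infty(0,T;L^2)\cap L^2(0,T;H^1)$ uniformly in $\varepsilon$, so that gradient converges weakly-$*$ to some $\nablah P$, with $\dz P=0$ inherited from $\dz\xi_\varepsilon=0$, along a further subsequence. It then remains to pass to the limit term by term on the left: the viscous terms converge using the weak $L^2H^3$ convergence of $v_\varepsilon$ and the strong convergence $e^{-\varepsilon\alpha\xi_\varepsilon}\to1$; the correction $c^2(\tfrac{e^{\varepsilon\xi_\varepsilon}-1}{\varepsilon}-\xi_\varepsilon)\nablah\xi_h$ vanishes by \eqref{limit:pressure-2}; and the nonlinear terms $\mathcal P_\sigma v_\varepsilon\cdot\nablah v_\varepsilon$, $\mathcal P_\tau v_\varepsilon\cdot\nablah\mathcal P_\sigma v_\varepsilon$ and $w_\varepsilon\dz\mathcal P_\sigma v_\varepsilon$ each pair a weakly convergent factor with the locally strongly convergent factor $\mathcal P_\sigma v_\varepsilon\to v_p$ in $L^2_tH^2_{\sigma,loc}$, so they converge in $\mathcal D'$ to $v_p\cdot\nablah v_p$ and $w_p\dz v_p$. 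This produces \eqref{limit:momentum-eq}, i.e. $(v_p,w_p)$ solves \eqref{eq:primitive-eq} in the sense of distributions with the stated initial data, which is the assertion.

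The main obstacle is precisely the treatment of the oscillatory (acoustic) part $\mathcal P_\tau v_\varepsilon$ in the nonlinearity: its $H^2$ norm need not vanish as $\varepsilon\to0^+$ — only its weak limit is $0$ — so a product such as $\mathcal P_\tau v_\varepsilon\cdot\nablah\mathcal P_\tau v_\varepsilon$ cannot be handled by a naive weak limit. The resolution is structural: $\mathcal P_\tau v_\varepsilon$ is a horizontal gradient, so its self-interaction is again a gradient and is absorbed into the undetermined pressure $P$, while each surviving cross term multiplies $\mathcal P_\tau v_\varepsilon$ or $w_\varepsilon$ by the \emph{compact} factor $\mathcal P_\sigma v_\varepsilon$. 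The remaining care is to check that local strong convergence of $\mathcal P_\sigma v_\varepsilon$ is sufficient to pass to the limit against compactly supported test functions, and that the pressure extracted from the decomposed equation is consistent with the constraint $\dz P=0$.
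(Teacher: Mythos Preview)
Your proposal is correct and follows essentially the same route as the paper: uniform bounds plus Banach--Alaoglu and Aubin--Lions for the compactness, multiplication of \subeqref{eq:perturbation}{1} by $\varepsilon e^{\varepsilon\xi_\varepsilon}$ for the divergence constraint, and the decomposition \eqref{perturbation-eq-in-decomposed} in which the self-interaction $\mathcal P_\tau v_\varepsilon\cdot\nablah\mathcal P_\tau v_\varepsilon+w_\varepsilon\dz\mathcal P_\tau v_\varepsilon=\tfrac12\nablah\abs{\nablah\psi_\varepsilon}{2}$ is absorbed into the pressure while every remaining nonlinear term carries the strongly convergent factor $\mathcal P_\sigma v_\varepsilon$. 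Your closing paragraph correctly identifies this structural point as the crux of the argument, exactly as in the paper.
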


In the rest of this section, we will discuss the converging behaviors of $ \xi_\varepsilon $ and $ \mathcal P_\tau v_\varepsilon $ as $ \varepsilon \rightarrow 0^+ $ in some strong sense. We will consider such a problem in two cases: $ \Omega_h = \mathbb R^2 $ and $ \Omega_h = \mathbb T^2 $.
In fact, we will investigate the following acoustic wave equations associated with \eqref{eq:perturbation}:
\begin{equation}\label{acoustic-wave-001}
	\begin{cases}
		\dt \xi_\varepsilon + \dfrac{\gamma-1}{\varepsilon} \dvh \mathcal P_\tau v_\varepsilon  = -  v_\varepsilon\cdot\nablah \xi_\varepsilon \\
		~~~~  - \dfrac{\gamma-1}{\varepsilon} ( \dvh \mathcal P_\sigma v_\varepsilon + \dz w_\varepsilon)  & \text{in} ~ \Omega_h \times 2 \mathbb T,\\
		\dt \mathcal P_\tau v_\varepsilon + \dfrac{c^2}{\varepsilon} \nablah \xi_\varepsilon = \dfrac{c^2(1-e^{\varepsilon \xi_\varepsilon})}{\varepsilon} \nablah \xi_\varepsilon  - \dt \mathcal P_\sigma v_\varepsilon \\
		~~~~ - v_\varepsilon \cdot \nablah v_\varepsilon -  w_\varepsilon \dz v_\varepsilon + c_1 e^{-\varepsilon \alpha\xi_\varepsilon} \bigl( \mu \deltah v_\varepsilon \\
		~~~~ + \lambda \nablah \dvh v_\varepsilon + \partial_{zz} v_\varepsilon) & \text{in} ~ \Omega_h \times 2 \mathbb T,\\
		\dz \xi_\varepsilon = 0 & \text{in} ~ \Omega_h \times 2 \mathbb T.
	\end{cases}
\end{equation}
To study the acoustic wave,
notice that, $ \mathcal P_\tau v_\varepsilon = \nablah \psi_\varepsilon $ is a function independent of the $ z $-variable, where $ \psi_\varepsilon $ is defined in \eqref{limit:potential-function}. After averaging \subeqref{acoustic-wave-001}{1} in the $ z $-variable and applying projection operator $ \mathcal P_\tau $ to \subeqref{acoustic-wave-001}{2}, system \eqref{acoustic-wave-001} is reduced to,
\begin{equation}\label{acoustic-wave}
	\begin{cases}
		\dt \xi_\varepsilon + \dfrac{\gamma-1}{\varepsilon} \dvh \nablah \psi_\varepsilon = G_1 & \text{in} ~ \Omega_h, \\
		\dt \nablah \psi_\varepsilon + \dfrac{c^2}{\varepsilon} \nablah \xi_\varepsilon = G_2 + G_3 & \text{in} ~ \Omega_h,
	\end{cases}
\end{equation}
where we have used $ \nablah \psi_\varepsilon $ (defined in \eqref{limit:potential-function}) to represent $ \mathcal P_\tau v_\varepsilon $, and
\begin{equation*}
	\begin{aligned}
		& G_1 : = - \overline v_\varepsilon \cdot \nablah \xi_\varepsilon, \\
		& G_2 : = \dfrac{c^2(1-e^{\varepsilon \xi_\varepsilon})}{\varepsilon} \nablah \xi_\varepsilon + \mathcal P_\tau \bigl( c_1 e^{-\varepsilon \alpha\xi_\varepsilon} \bigl( \mu \deltah  v_\varepsilon + \lambda \nablah \dvh  v_\varepsilon + \partial_{zz} v_\varepsilon ) \bigr), \\
		& G_3: =  - \mathcal P_\tau \bigl( v_\varepsilon \cdot \nablah v_\varepsilon +  w_\varepsilon \dz v_\varepsilon \bigr).
	\end{aligned}
\end{equation*}
Therefore, the acoustic wave system is only two dimensional.
From \eqref{ue:206}, \eqref{ue:205}, \eqref{ue:total}, 
we have
\begin{equation}\label{aw:bound-of-source}
\begin{gathered}
	\normh{G_1}{L^\infty(0,T;H^1(\Omega_h))} + \normh{G_2}{L^\infty(0,T;L^2(\Omega_h))} + \normh{G_2}{L^2(0,T;H^1(\Omega_h))}\\
	+ \normh{G_3}{L^\infty(0,T;L^2(\Omega_h))} + \normh{G_3}{L^2(0,T;H^1(\Omega_h))}< C.
\end{gathered}
\end{equation}
Now denote by the linear operator $ L $ defined on $ (\mathcal D'(\Omega_h))^{3} $ as
\begin{equation}\label{aw:linear-operator}
	L(\mathcal U) : = \biggl( \begin{array}{c}
		(\gamma - 1) \dvh u\\ c^2 \nablah q
	\end{array} \biggr), ~~~~ \text{where} ~~ \mathcal U = \biggl( \begin{array}{c}
		q\\ u
	\end{array}\biggr),
\end{equation}
where $ q \in \mathcal D'(\Omega_h), ~ u \in ( \mathcal D'(\Omega_h))^2 $.
Then \eqref{acoustic-wave} can be written as
\begin{equation}\label{aw:group-form}
	\dt \mathcal U_\varepsilon + \dfrac{1}{\varepsilon} L \mathcal U_\varepsilon = \mathcal G ~~~~ \text{in} ~ \Omega_h,
\end{equation}
where
\begin{equation*}
	\mathcal G : = \biggl( \begin{array}{c}
		G_1 \\G_2 + G_3
	\end{array}\biggr) , ~~ \mathcal U_\varepsilon : = \biggl( \begin{array}{c}
		\xi_\varepsilon \\ \nablah \psi_h
	\end{array}\biggr) = \biggl( \begin{array}{c}
		\xi_\varepsilon \\ \mathcal P_\tau v_\varepsilon
	\end{array}\biggr).
\end{equation*}
Also, the initial data are given as
\begin{equation}\label{aw:initial-data}
	\mathcal U_\varepsilon(t=0) = \mathcal U_0 = \biggl( \begin{array}{c}
		\xi_0 \\ \mathcal P_\tau v_0
	\end{array} \biggr).
\end{equation}
Also, we have
\begin{gather*}
	{\xi_\varepsilon}\in {L^\infty(0,T;H^2(\Omega_h))}, ~ {\mathcal P_\tau v_\varepsilon} \in {L^\infty(0,T;H^2(\Omega_h))\cap L^2(0,T;H^3(\Omega_h))},\\
	\mathcal G \in L^\infty(0,T;L^2(\Omega_h)) \cap L^2(0,T;H^1(\Omega_h)), ~ \mathcal U_0 \in H^2(\Omega_h),
\end{gather*} and from \eqref{initial-data}, \eqref{ue:total} and \eqref{aw:bound-of-source},
\begin{equation}\label{aw:regularity}
	\begin{gathered}
	\normh{\xi_\varepsilon}{L^\infty(0,T;H^2(\Omega_h))} +  \normh{\mathcal P_\tau v_\varepsilon}{L^\infty(0,T;H^2(\Omega_h))\cap L^2(0,T;H^3(\Omega_h))} \\
	+ \normh{\mathcal G}{L^\infty(0,T;L^2(\Omega_h)) \cap L^2(0,T;H^1(\Omega_h))} + \normh{\mathcal U_0}{H^2(\Omega_h)} < C.
	\end{gathered}
\end{equation}
Define the associated solution operator of $ L $ by \begin{equation}\label{aw:def-group-operator-00} \mathcal L: t \mapsto \mathcal L(t) = \exp (-L t). \end{equation}
That is, given $ \mathcal V_0  \in ( \mathcal D')^3 $, $ \mathcal L(t) \mathcal V_0 $ satisfies the linear equation:
\begin{equation}\label{aw:def-group-operator}
	\dt (\mathcal L(t) \mathcal V_0) + L ( \mathcal L(t) \mathcal V_0) = 0.
\end{equation}
Notice that $ \mathcal L $ is linear and $ \mathcal L(t_1 + t_2) = \mathcal L(t_1) \mathcal L(t_2) $ for $ t_1, t_2 \in (0,\infty) $.
Then the solution $ \mathcal U_\varepsilon $ of  \eqref{aw:group-form} can be represented as, using the Duhamel principle,
\begin{equation}\label{aw:representation-sol}
	\mathcal U_\varepsilon(t) = \mathcal L(\dfrac{t}{\varepsilon}) \mathcal U_0 + \int_0^t \mathcal L(\dfrac{t-s}{\varepsilon}) \mathcal G(s) \,ds.
\end{equation}

\subsection{The case when $ \Omega_h = \mathbb R^2 $: dispersion of the acoustic waves}\label{subsec:convergence-whole}
In this section,
we shall establish the strong convergence of $ \mathcal U_\varepsilon $ in the case when $ \Omega_h = \mathbb R^2 $.
To be precise, we will show the following:
\begin{prop}\label{prop:strong-convergence-whole-space}Under the same assumptions as in Proposition \ref{prop:convergence-eq-distribution}, in the case when $ \Omega_h = \mathbb R^2 $, we have
\begin{equation*}\tag{\ref{aw:strong-convergence-01}}
	\xi_\varepsilon \rightarrow 0, ~ \mathcal P_\tau v_\varepsilon \rightarrow 0 ~~ \text{as} ~ \varepsilon \rightarrow 0^+ ~~ \text{in} ~ L^2(0,T;W^{\frac 1 2 , 6}(\mathbb R^2)).
\end{equation*}
In particular, we have, as $ \varepsilon \rightarrow 0^+  $,
\begin{equation*}
	\norm{v_\varepsilon - v_p}{L^2(0,T;L^6_{loc}(\mathbb R^2))} + \norm{\xi_\varepsilon}{L^2(0,T;L^6(\mathbb R^2))} \rightarrow 0.
\end{equation*}
\end{prop}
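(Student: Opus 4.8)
The plan is to read \eqref{acoustic-wave}--\eqref{aw:representation-sol} as a linear wave system on $\Omega_h=\mathbb R^2$ forced by data that is uniformly bounded in $\varepsilon$, and to exploit dispersion in the spirit of Desjardins--Grenier \cite{DesjardinsGrenier1999}. Note first that both unknowns $\xi_\varepsilon$ and $\mathcal P_\tau v_\varepsilon=\nablah\psi_\varepsilon$ are independent of $z$, so the problem is genuinely two dimensional. The operator $L$ in \eqref{aw:linear-operator}, restricted to this ``acoustic'' subspace, is --- up to Riesz-type transforms, which are bounded on every $L^r(\mathbb R^2)$, $1<r<\infty$ --- the direct sum of the half-wave propagators $e^{\pm i t\,c\sqrt{\gamma-1}\,|\nablah|}$; equivalently, eliminating $\nablah\psi_\varepsilon$ from \eqref{acoustic-wave} shows that $\xi_\varepsilon$ (and $\psi_\varepsilon$) solves a two-dimensional wave equation with speed $c\sqrt{\gamma-1}/\varepsilon$. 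Hence the solution operator $\mathcal L$ of \eqref{aw:def-group-operator} obeys the same Strichartz estimates as the free wave group, and the Duhamel formula \eqref{aw:representation-sol} is the natural starting point.

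Next I would invoke the dispersive/Strichartz estimates for the two-dimensional wave equation (see, e.g., \cite{Ginibre1995,Lindblad1995,Keel1998}) with the admissible exponents $(q,r)=(6,6)$, namely $\norm{e^{it|\nablah|}f}{L^6(\mathbb R;W^{1/2,6}(\mathbb R^2))}\lesssim\norm{f}{H^1(\mathbb R^2)}$, which our $H^2$ data satisfies with room to spare. Rescaling time $t\mapsto t/\varepsilon$ on the finite interval $[0,T]$ and changing variables in the integral produces the gain
\[
\norm{\mathcal L(\tfrac{\cdot}{\varepsilon})\,\mathcal U_0}{L^6(0,T;W^{1/2,6}(\mathbb R^2))}\lesssim \varepsilon^{1/6}\norm{\mathcal U_0}{H^2(\mathbb R^2)}.
\]
For the forcing term I would avoid a direct dual-Strichartz pairing and instead use Minkowski's integral inequality to view $\int_0^t\mathcal L(\tfrac{t-s}{\varepsilon})\mathcal G(s)\,ds$ as a superposition of free evolutions, so that the homogeneous estimate applied to $\mathcal G(s)$ and integrated in $s$ gives a bound $\lesssim \varepsilon^{1/6}\,T^{1/2}\norm{\mathcal G}{L^2(0,T;H^1(\mathbb R^2))}$, whose right-hand side is bounded uniformly in $\varepsilon$ by \eqref{aw:bound-of-source}--\eqref{aw:regularity}. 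Combining the two contributions and using H\"older's inequality in time, $L^6(0,T)\hookrightarrow L^2(0,T)$, yields
\[
\norm{\xi_\varepsilon}{L^2(0,T;W^{1/2,6}(\mathbb R^2))}+\norm{\mathcal P_\tau v_\varepsilon}{L^2(0,T;W^{1/2,6}(\mathbb R^2))}\lesssim \varepsilon^{1/6}\,C\longrightarrow 0,
\]
which is exactly \eqref{aw:strong-convergence-01}.

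Finally, since $\xi_\varepsilon$ and $\mathcal P_\tau v_\varepsilon$ do not depend on $z$, \eqref{aw:strong-convergence-01} and the embedding $W^{1/2,6}(\mathbb R^2)\hookrightarrow L^6(\mathbb R^2)$ give $\norm{\xi_\varepsilon}{L^2(0,T;L^6(\mathbb R^2\times2\mathbb T))}\to0$ (the $z$-integration only contributes a constant factor). For the velocity I would split $v_\varepsilon-v_p=(\mathcal P_\sigma v_\varepsilon-v_p)+\mathcal P_\tau v_\varepsilon$: the first term tends to $0$ in $C([0,T];H^1_{\sigma,loc}(\mathbb R^2\times2\mathbb T))$ by \eqref{limit:007}, hence in $L^2(0,T;L^6_{loc})$ by the three-dimensional Sobolev embedding $H^1\hookrightarrow L^6$, while the second term tends to $0$ in $L^2(0,T;L^6(\mathbb R^2\times2\mathbb T))$ as just shown; adding these gives $\norm{v_\varepsilon-v_p}{L^2(0,T;L^6_{loc}(\mathbb R^2\times2\mathbb T))}\to0$. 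The main obstacle is the forcing/Duhamel term: on the unbounded domain $\mathbb R^2$ one cannot place $\mathcal G$ in the $L^{q'}_tL^{r'}_x$ space dual to an admissible pair with $r'<2$, so the $\varepsilon$-gain must be extracted by the superposition (Minkowski) argument using only the $\varepsilon$-uniform bounds \eqref{aw:bound-of-source} on $\mathcal G$ together with the rescaled homogeneous Strichartz estimate; once this is set up, the remaining steps are routine Sobolev bookkeeping.
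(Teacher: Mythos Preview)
Your proposal is correct and follows essentially the same route as the paper: the paper also quotes the Desjardins--Grenier Strichartz estimate with the admissible pair $(q,p)=(6,6)$ (so $\eta=1/2$, $s=1/2$), rescales time to gain $\varepsilon^{1/6}$ on the free piece, and handles the Duhamel term by exactly the Minkowski/superposition argument you describe (stated as a short lemma) rather than a dual-Strichartz pairing, using only the uniform bound $\mathcal G\in L^2(0,T;H^1)$. Your treatment of the ``In particular'' conclusion via the splitting $v_\varepsilon-v_p=(\mathcal P_\sigma v_\varepsilon-v_p)+\mathcal P_\tau v_\varepsilon$ together with \eqref{limit:007} and $H^1\hookrightarrow L^6$ spells out what the paper leaves implicit.
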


In order to show Proposition \ref{prop:strong-convergence-whole-space}, we will use the dispersion estimates of operator $ \mathcal L $ in the whole space $ \mathbb R^2 $. In particular, we will employ the Strichartz inequalities as in \cite{DesjardinsGrenier1999}, to derive the appropriate decay as $ \varepsilon \rightarrow 0^+ $. Indeed, let $ \phi_0 \in (\mathcal D'(\mathbb R^2))^3 $. The following inequality is from \cite[equation (3.4)]{DesjardinsGrenier1999}:
\begin{equation}\label{aw-wsc:strichartz}
	\normh{\mathcal L(t) \phi_0}{L^q (\mathbb R_+; L^p(\mathbb R^2))} \leq C_{p,q} \normh{\phi_0}{H^\eta(\mathbb R^2)},
\end{equation}
provided that the right-hand side is finite, where $ p, q > 2 $ and $ \eta \in (0,\infty) $ satisfying
\begin{equation}\label{aw-wsc:indices}
\dfrac{2}{q} = \dfrac{1}{2} - \dfrac{1}{p} ~~~ \text{and} ~~~ \eta q = 3.
\end{equation}
We will make use of \eqref{aw-wsc:strichartz} to derive the strong dispersion. To begin with, the following dispersion inequalities are analogies to inequalities (3.6) and (3.7) in \cite{DesjardinsGrenier1999}.
\begin{lm}
For $ \phi_0 $ and $ \phi(s) \in (\mathcal D'(\mathbb R^2))^3 $, $ p,q,\eta $ satisfying \eqref{aw-wsc:indices} and $ s \geq 0 $, we have
\begin{align}
	& \normh{\mathcal L\bigl( \dfrac{t}{\varepsilon} \bigr) \phi_0}{L^q(0,T;W^{s,p}(\mathbb R^2))} \leq C \varepsilon^{1/q} \normh{\phi_0}{H^{\eta+s}(\mathbb R^2)} \label{aw-wsc:001} , \\
	& \normh{\int_0^t \mathcal L\bigl(\dfrac{t-s}{\varepsilon}\bigr) \phi(s)\,ds}{L^2(0,T;W^{s,p}(\mathbb R^2))} \nonumber \\
	 & ~~~~ ~~~~\leq C(1+T) \varepsilon^{1/q} \normh{\phi}{L^2(0,T;H^{\eta+s}(\mathbb R^2))} \label{aw-wsc:002},
\end{align}
provided the right-hand sides are finite.
\end{lm}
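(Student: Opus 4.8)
The plan is to obtain both inequalities from the Strichartz estimate \eqref{aw-wsc:strichartz} by rescaling the time variable. The structural point is that the operator $L$ in \eqref{aw:linear-operator} has constant coefficients, so $\mathcal L(t)$ is a Fourier multiplier in the horizontal variables and commutes with $(1-\deltah)^{s/2}$; combined with the Bessel-potential characterization $\normh{f}{W^{s,p}(\mathbb R^2)} \simeq \normh{(1-\deltah)^{s/2}f}{L^p(\mathbb R^2)}$ (valid for $1<p<\infty$) and $\normh{(1-\deltah)^{s/2}f}{H^\eta(\mathbb R^2)} = \normh{f}{H^{\eta+s}(\mathbb R^2)}$, this reduces everything to the case $s=0$: it suffices to prove both estimates for $s=0$ and then to apply them with $\phi_0$ and $\phi$ replaced by $(1-\deltah)^{s/2}\phi_0$ and $(1-\deltah)^{s/2}\phi$.

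For \eqref{aw-wsc:001} with $s=0$, I would change variables $\tau = t/\varepsilon$ in the time integral, so that
\begin{equation*}
	\int_0^T \normh{\mathcal L\bigl( \tfrac{t}{\varepsilon} \bigr) \phi_0}{L^p(\mathbb R^2)}^q \,dt = \varepsilon \int_0^{T/\varepsilon} \normh{\mathcal L(\tau)\phi_0}{L^p(\mathbb R^2)}^q \,d\tau \leq \varepsilon\,\normh{\mathcal L(\tau)\phi_0}{L^q(\mathbb R_+;L^p(\mathbb R^2))}^q ,
\end{equation*}
and then invoke \eqref{aw-wsc:strichartz}; raising to the power $1/q$ gives \eqref{aw-wsc:001} with $C=C_{p,q}$, independent of $T$ and $\varepsilon$.

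For \eqref{aw-wsc:002} with $s=0$, I would freeze the inner time variable (written $\sigma$): for each $\sigma\in[0,T]$ the map $t\mapsto \mathbf 1_{\{t>\sigma\}}\,\mathcal L\bigl(\tfrac{t-\sigma}{\varepsilon}\bigr)\phi(\sigma)$ is, up to a time translation and restriction to a subinterval of $(0,T)$, exactly the profile controlled by \eqref{aw-wsc:001}, so
\begin{equation*}
	\normh{\mathbf 1_{\{t>\sigma\}}\,\mathcal L\bigl(\tfrac{t-\sigma}{\varepsilon}\bigr)\phi(\sigma)}{L^q(0,T;L^p(\mathbb R^2))} \leq C\varepsilon^{1/q}\normh{\phi(\sigma)}{H^\eta(\mathbb R^2)} .
\end{equation*}
Pulling the space-time norm inside the $\sigma$-integral by Minkowski's integral inequality, then applying the Cauchy--Schwarz inequality in $\sigma$, yields
\begin{equation*}
	\normh{\int_0^t \mathcal L\bigl(\tfrac{t-\sigma}{\varepsilon}\bigr)\phi(\sigma)\,d\sigma}{L^q(0,T;L^p(\mathbb R^2))} \leq C\varepsilon^{1/q}\int_0^T \normh{\phi(\sigma)}{H^\eta(\mathbb R^2)}\,d\sigma \leq C\varepsilon^{1/q}T^{1/2}\normh{\phi}{L^2(0,T;H^\eta(\mathbb R^2))} ;
\end{equation*}
since $q>2$, the embedding $L^q(0,T)\hookrightarrow L^2(0,T)$ with norm $T^{1/2-1/q}$ together with the elementary bound $T^{1-1/q}\leq 1+T$ then give \eqref{aw-wsc:002}.

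The one genuinely delicate step is the Duhamel estimate: applying the triangle inequality directly under the $\sigma$-integral would force one to control the fixed-time norm $\normh{\mathcal L(\tfrac{t-\sigma}{\varepsilon})\phi(\sigma)}{W^{s,p}(\mathbb R^2)}$, which carries no $\varepsilon$-decay for $H^s$ data; it is the use of the space-time Strichartz norm together with the Minkowski/Cauchy--Schwarz manoeuvre that preserves the $\varepsilon^{1/q}$ gain. One must also keep in mind that \eqref{aw-wsc:strichartz} is a half-line estimate, so restricting the time integration to $(0,T)$ — or to $(\sigma,T)$ in the Duhamel term — is harmless, as it only decreases the relevant $L^q_t$-norms.
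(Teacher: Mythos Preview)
Your proof is correct. For \eqref{aw-wsc:001} you do exactly what the paper does: rescale $t\mapsto t/\varepsilon$ and shift the Sobolev index via the Bessel potential, using that $\mathcal L$ is a constant-coefficient Fourier multiplier.

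For \eqref{aw-wsc:002} your route differs slightly from the paper's. The paper first bounds the spatial norm pointwise in $t$ (Minkowski in the inner variable followed by Cauchy--Schwarz, producing a factor $t$), then integrates in $t$, swaps the order by Fubini, applies H\"older in $t$ to pass from the $L^2_t$ to the $L^q_t$ norm, and finally invokes \eqref{aw-wsc:001} for each fixed inner time. You instead apply Minkowski's integral inequality directly at the level of the full $L^q_tL^p_x$ norm, hit each frozen-$\sigma$ piece with \eqref{aw-wsc:001}, then use Cauchy--Schwarz in $\sigma$ and the embedding $L^q(0,T)\hookrightarrow L^2(0,T)$. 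Both arguments land on the same prefactor $T^{1-1/q}\le 1+T$; your version is a bit more streamlined since it avoids the Fubini step and the intermediate pointwise-in-$t$ bound, while the paper's version makes the passage through $L^2_t$ more explicit. Either way the essential mechanism --- preserving the $\varepsilon^{1/q}$ gain by estimating in the Strichartz space-time norm rather than pointwise in $t$ --- is the same, and you identified it correctly.
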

\begin{proof}
Indeed \eqref{aw-wsc:001} is a direct consequence of rescaling the temporal variable and replacing $ \phi_0 $ with $ (I - \deltah)^{s/2} \phi_0 $ in \eqref{aw-wsc:strichartz}. In order to show \eqref{aw-wsc:002}, notice that after applying the Minkowski and H\"older inequalities, one has
\begin{equation*}
	\normh{\int_0^t \mathcal L \bigl(\dfrac{t-s}{\varepsilon}\bigr) \phi(s) \,ds }{W^{s,p}(\mathbb R^2)}^2 \leq C t \int_0^t \normh{\mathcal L \bigl(\dfrac{t-s}{\varepsilon}\bigr) \phi(s)}{W^{s,p}(\mathbb R^2)}^2 \,ds.
\end{equation*}
Then by employing the Fubini theorem, it holds
\begin{align*}
	& \int_0^T\normh{\int_0^t \mathcal L \bigl(\dfrac{t-s}{\varepsilon}\bigr) \phi(s) \,ds }{W^{s,p}(\mathbb R^2)}^2 \,dt \\
	& ~~~~ \leq CT \int_0^T \int_s^T \normh{\mathcal L \bigl(\dfrac{t-s}{\varepsilon}\bigr) \phi(s)}{W^{s,p}(\mathbb R^2)}^2 \,dt  \,ds \\
	& ~~~~  \leq CT \int_0^T (T-s)^{1-2/q} \biggl(  \int_s^T \normh{\mathcal L \bigl(\dfrac{t-s}{\varepsilon}\bigr) \phi(s)}{W^{s,p}(\mathbb R^2)}^q \,dt  \biggr)^{2/q} \,ds \\
	& ~~~~ \leq \varepsilon^{2/q} CT^{2-2/q} \int_0^T  \normh{\phi(s)}{H^{\eta+s}(\mathbb R^2)}^2 \,ds = \varepsilon^{2/q} C T^{2-2/q} \normh{\phi}{L^2(0,T;H^{\eta+s}(\mathbb R^2))}^q.
\end{align*}
This finishes the proof of \eqref{aw-wsc:002}.
\end{proof}
\begin{proof}[Proof of Proposition \ref{prop:strong-convergence-whole-space}]
Consider $ \eta= \frac{1}{2}, s= \frac{1}{2},  q = 6, p = 6 $ in \eqref{aw-wsc:001} and \eqref{aw-wsc:002}, which satisfy \eqref{aw-wsc:indices}. Then from \eqref{aw:representation-sol}, after applying the triangle and H\"older inequalities,
\begin{align*}
	& \normh{\mathcal U_\varepsilon}{L^2(0,T;W^{\frac{1}{2},6}(\mathbb R^2))} \lesssim T^{\frac 1 3}\normh{\mathcal L\bigl(\dfrac{t}{\varepsilon}\bigr)\mathcal U_0}{L^6(0,T;W^{\frac{1}{2},6}(\mathbb R^2))} \\
	& ~~~~ + \normh{\int_0^t\mathcal L\bigl( \dfrac{t-s}{\varepsilon}\bigr) \mathcal G(s)\,ds}{L^2(0,T;W^{\frac{1}{2},6}(\mathbb R^2))} \lesssim T^{\frac 1 3} \varepsilon^{\frac 1 6} \normh{\mathcal U_0}{H^1(\mathbb R^2)} \\
	& ~~~~  + (1+T) \varepsilon^{\frac 1 6} \normh{\mathcal G}{L^2(0,T;H^1(\mathbb R^2))} \lesssim (1+T) \varepsilon^{\frac 1 6},
\end{align*}
where the second  and the third inequalities follow from \eqref{aw-wsc:001}, \eqref{aw-wsc:002} and \eqref{aw:regularity}.
This yields \eqref{aw:strong-convergence-01} and finishes the proof of Proposition \ref{prop:strong-convergence-whole-space}.
\end{proof}

\subsection{The case when $ \Omega_h = \mathbb T^2 $: the fast oscillation}\label{subsec:convergence-periodic}
We will establish the convergence behavior of $ \mathcal U_\varepsilon $ in the case when $  \Omega_h = \mathbb T^2 $ in this subsection. Indeed, we shall investigate the fast oscillations of the acoustic waves as $ \varepsilon \rightarrow 0^+ $.
This is motivated by \cite{Schochet1994} (see also \cite{Gallagher1998}).
\subsubsection*{The oscillation equations and the convergence of oscillations}
To begin with, \subeqref{acoustic-wave}{2} can be written as
\begin{equation}\label{aw-per:001}
\begin{aligned}
	& \dt \mathcal P_\tau v_\varepsilon + \dfrac{c^2}{\varepsilon} \nablah \xi_\varepsilon - c_1 ( \mu \deltah \mathcal P_\tau v_\varepsilon + \lambda \nablah \dvh \mathcal P_\tau v_\varepsilon) \\
	& ~~~~ = \dfrac{c^2(1-e^{\varepsilon\xi_\varepsilon})}{\varepsilon} \nablah \xi_\varepsilon - \mathcal P_\tau ( v_\varepsilon \cdot \nablah v_\varepsilon) - \mathcal P_\tau (w_\varepsilon \dz v_\varepsilon) \\
	& ~~~~ ~~~~ + \mathcal P_\tau (c_1 (e^{-\varepsilon\alpha\xi_\varepsilon} -1 )(\mu \deltah  v_\varepsilon + \lambda\nablah \dvh v_\varepsilon + \partial_{zz} v_\varepsilon )).
\end{aligned}
\end{equation}
Moreover, for any $ u \in (\mathcal D'(\mathbb T^2 \times 2\mathbb T))^2 $, consider $ \overline u(x,y) = \int_0^1 u \,dz \in (\mathcal D'(\mathbb T^2 ))^2 $ as a function on $ \mathbb T^2 \times 2 \mathbb T$. One has $ \mathcal P_\tau u = \mathcal P_\tau \overline u $. Then one has, after applying integration by parts and substituting \eqref{vertical-velocity-dz},
\begin{equation}\label{aw-per:0001}
\begin{aligned}
	& \mathcal P_\tau ( w_\varepsilon \dz v_\varepsilon ) = \mathcal P_\tau ( - \int_0^1 \dz w_\varepsilon v_\varepsilon \,dz )\\
	& ~~~~ = \mathcal P_\tau ( \int_0^1 \bigl( (\varepsilon\alpha \widetilde v_\varepsilon \cdot \nablah \xi_\varepsilon + \dvh \widetilde v_\varepsilon) v_\varepsilon \bigr) \,dz ) \\
	& ~~~~ = \mathcal P_\tau (\int_0^1 \bigl( (\varepsilon \alpha \widetilde{\mathcal P_\sigma v_\varepsilon} \cdot \nablah \xi_\varepsilon ) \mathcal P_\sigma v_\varepsilon + \dvh \widetilde{\mathcal P_\sigma v_\varepsilon} \mathcal P_\sigma v_\varepsilon\bigr) \,dz  ) \\
	& ~~~~ = \mathcal P_\tau ( (\varepsilon \alpha \widetilde{\mathcal P_\sigma v_\varepsilon} \cdot \nablah \xi_\varepsilon ) \mathcal P_\sigma v_\varepsilon + \dvh \widetilde{\mathcal P_\sigma v_\varepsilon} \mathcal P_\sigma v_\varepsilon ) ,  \\
	& \mathcal P_\tau ((e^{-\varepsilon \alpha \xi_\varepsilon}-1) \partial_{zz} v_\varepsilon ) = 0,
\end{aligned}
\end{equation}
where we have used the facts that $ \int_0^1 \widetilde {P_\sigma v_\varepsilon} \,dz = 0, \widetilde v_\varepsilon = \widetilde{P_\sigma v_\varepsilon} $ and that $ \mathcal P_\tau v_\varepsilon, \xi_\varepsilon $ are independent of the $ z $-variable.

On the other hand, notice that
\begin{align*}
	& v_\varepsilon \cdot \nablah \xi_\varepsilon = \mathcal P_\sigma v_\varepsilon \cdot \nablah \xi_\varepsilon + \mathcal P_\tau v_\varepsilon \cdot \nablah \xi_\varepsilon = \dvh (\xi_\varepsilon \mathcal P_\sigma v_\varepsilon ) - \xi_\varepsilon \dvh \mathcal P_\sigma v_\varepsilon \\
	& ~~~~ + \mathcal P_\tau v_\varepsilon \cdot \nablah \xi_\varepsilon.
\end{align*}
\subeqref{acoustic-wave}{1} can be written as
\begin{equation}\label{aw-per:002}
	\dt \xi_\varepsilon + \dfrac{\gamma-1}{\varepsilon} \dvh \mathcal P_\tau v_\varepsilon = - \dvh(\xi_\varepsilon \int_0^1 \mathcal P_\sigma v_\varepsilon \,dz) - \mathcal P_\tau v_\varepsilon \cdot \nablah \xi_\varepsilon.
\end{equation}
Additionally, integrating \eqref{aw-per:002} in $ \mathbb T^2 $ yields
\begin{equation}\label{aw-per:003}
	\dfrac{d}{dt} \int_{\mathbb T^2} \xi_\varepsilon \idxh = - \int_{\mathbb T^2} ( \mathcal P_\tau v_\varepsilon \cdot \nablah \xi_\varepsilon ) \idxh.
\end{equation}

Consequently, after combining equations \eqref{aw-per:001}, \eqref{aw-per:0001}, \eqref{aw-per:002}, \eqref{aw-per:003}, we have the following system of oscillations:
\begin{equation}\label{aw-per:oscillation}
	\begin{cases}
		\dt  \xi_\varepsilon^o + \dfrac{\gamma-1}{\varepsilon} \dvh \mathcal P_\tau v_\varepsilon = - \dvh (\xi_\varepsilon^o \int_0^1 \mathcal P_\sigma v_\varepsilon \,dz) \\
		~~~~  - \mathcal P_\tau v_\varepsilon \cdot \nablah \xi_\varepsilon^o + \int_{\mathbb T^2} ( \mathcal P_\tau v_\varepsilon \cdot \nablah \xi_\varepsilon^o ) \idxh &\text{in} ~ \mathbb T^2, \\
		\dt \mathcal P_\tau v_\varepsilon + \dfrac{c^2}{\varepsilon} \nablah \xi_\varepsilon^o - c_1 (\mu \deltah \mathcal P_\tau v_\varepsilon + \lambda \nablah \dvh \mathcal P_\tau v_\varepsilon ) \\
		~~~~ = - \mathcal P_\tau (v_\varepsilon \cdot \nablah v_\varepsilon +  (\varepsilon \alpha \widetilde{\mathcal P_\sigma v_\varepsilon} \cdot \nablah \xi_\varepsilon ) \mathcal P_\sigma v_\varepsilon) \\
		~~~~ + \mathcal P_\tau (\dvh \widetilde{\mathcal P_\sigma v_\varepsilon} \mathcal P_\sigma v_\varepsilon  )
		+ \dfrac{c^2(1-e^{\varepsilon\xi_\varepsilon})}{\varepsilon} \nablah \xi_\varepsilon^o \\
		~~~~ + \mathcal P_\tau (c_1 (e^{-\varepsilon\alpha\xi_\varepsilon} -1 )(\mu \deltah  v_\varepsilon + \lambda\nablah \dvh v_\varepsilon )) &\text{in} ~ \mathbb T^2,
	\end{cases}
\end{equation}
where
\begin{equation}\label{aw-per:density-osc}
		\xi_\varepsilon^o := \xi_\varepsilon - \int_{\mathbb T^2} \xi_\varepsilon \idxh. 
\end{equation}
In the following, denote by
\begin{equation}\label{aw-per:variable}
	\begin{gathered}
	\mathcal U_\varepsilon^o : = \biggl( \begin{array}{c}
		\xi_\varepsilon^o\\ \mathcal P_\tau v_\varepsilon
	\end{array}\biggr) = \biggl( \begin{array}{c}
		\xi_\varepsilon - \int_{\mathbb T^2} \xi_\varepsilon\idxh \\ \nablah \psi_\varepsilon
	\end{array}\biggr), ~~ g_\varepsilon := \int_{\mathbb T^2} \xi_\varepsilon \idxh, ~~~~ \text{and}\\
	 V_\varepsilon^o : = \mathcal L\bigl(-\dfrac{t}{\varepsilon}\bigr) \mathcal U_\varepsilon^o = \biggl(\begin{array}{c}
		 \mathcal L_1\bigl(-\dfrac{t}{\varepsilon}\bigr) \mathcal U_\varepsilon^o\\
		  \mathcal L_2\bigl(-\dfrac{t}{\varepsilon}\bigr) \mathcal U_\varepsilon^o
	\end{array} \biggr) ~~~~ \bigl( \text{so} ~ \mathcal U_\varepsilon^o = \mathcal L \bigl( \dfrac t \varepsilon \bigr) V_\varepsilon^o \bigr),
	\end{gathered}
\end{equation}
where $ \mathcal L_1 : (\mathcal D'(\mathbb T^2))^3 \mapsto \mathcal D'(\mathbb T^2), ~ \mathcal L_2: (\mathcal D'(\mathbb T^2))^3 \mapsto (\mathcal D'(\mathbb T^2))^2   $ will be referred to as the first and the second components of $ \mathcal L $, respectively.
Then $  V_\varepsilon^o $ satisfies
\begin{equation}\label{aw-per:cm-oscillation}
	\begin{aligned}
		& \dt V_\varepsilon^o + \mathcal Q_{\varepsilon,1} (\mathcal P_\sigma v_\varepsilon, V_\varepsilon^o) + \mathcal Q_{\varepsilon,2} (V_\varepsilon^o, V_\varepsilon^o) + \mathcal Q_{\varepsilon,3}(g_\varepsilon, V_\varepsilon^o) - \mathcal A_\varepsilon(D) V_\varepsilon^o \\
		& ~~~~ ~~~~ ~~~~ = \mathcal L\bigl(-\dfrac{t}{\varepsilon}\bigr) \biggl( \begin{array}{c}
			0 \\ K+ L_1 + L_2
			\end{array} \biggr),
	\end{aligned}
\end{equation}
where the linear and bi-linear operators are defined as follows: for $ V_1, V_2 \in (\mathcal D'(\mathbb T^2))^3, u \in (\mathcal D'(\mathbb T^2 \times 2 \mathbb T))^2, g \in \mathbb R $,
\begin{equation}\label{aw-per:operators}
	\begin{aligned}
		& \mathcal A_\varepsilon(D) V_1 : = \mathcal L \bigl(-\dfrac{t}{\varepsilon} \bigr) \biggl(\begin{array}{c}
			0 \\ c_1 ( \mu \deltah \mathcal L_2\bigl( \dfrac{t}{\varepsilon} \bigr) V_1 + \lambda \nablah \dvh \mathcal L_2\bigl( \dfrac{t}{\varepsilon} \bigr) V_1)
		\end{array} \biggr), \\
		& \mathcal Q_{\varepsilon, 1}( \mathcal P_\sigma u,V_1) := \mathcal L\bigl(-\dfrac{t}{\varepsilon}\bigr) \biggl( \begin{array}{c}
		\dvh ( \mathcal L_1 \bigl(\dfrac{t}{\varepsilon}\bigr) V_1 \int_0^1 \mathcal P_\sigma u \,dz )	\\
		\mathcal P_\tau ( \mathcal P_\sigma u \cdot \nablah \mathcal L_2\bigl(\dfrac t \varepsilon\bigr) V_1 + \mathcal L_2\bigl(\dfrac t \varepsilon\bigr) V_1 \cdot \nablah \mathcal P_\sigma u )
		\end{array} \biggr) , \\
		& \mathcal Q_{\varepsilon, 2}(V_{1} , V_{2})\\ &
		 := \mathcal L\bigl(-\dfrac{t}{\varepsilon}\bigr) \biggl( \begin{array}{c}
			\mathcal L_2 \bigl(\dfrac{t}{\varepsilon}\bigr) V_{1} \cdot \nablah \mathcal L_1 \bigl(\dfrac{t}{\varepsilon}\bigr) V_{2} - \int_{\mathbb T^2} \mathcal L_2 \bigl(\dfrac{t}{\varepsilon}\bigr) V_{1} \cdot \nablah \mathcal L_1 \bigl(\dfrac{t}{\varepsilon}\bigr) V_{2} \idxh \\
			\mathcal P_\tau (\mathcal L_2\bigl(\dfrac t \varepsilon\bigr) V_{1} \cdot \nablah \mathcal L_2\bigl(\dfrac t \varepsilon\bigr) V_{2} + c^2 \mathcal L_1\bigl(\dfrac{t}{\varepsilon}\bigr) V_{1} \nablah \mathcal L_1 \bigl(\dfrac{t}{\varepsilon}\bigr) V_{2} )
		\end{array} \biggr) ,\\
		& \mathcal Q_{\varepsilon, 3} (g, V_1) := \mathcal L \bigl( - \dfrac{t}{\varepsilon} \bigr) \biggl( \begin{array}{c}
			0 \\ \mathcal P_\tau( c^2  g
			 \nablah \mathcal L_1\bigl( \dfrac t \varepsilon \bigr) V_1 )
		\end{array} \biggr).
	\end{aligned}
\end{equation}
Here,
\begin{equation}\label{aw-per:quantities}
	\begin{aligned}
		& K : = \mathcal P_\tau ( - \mathcal P_\sigma v_\varepsilon \cdot \nablah \mathcal P_\sigma v_\varepsilon - \dvh \widetilde{\mathcal P_\sigma  v_\varepsilon } P_\sigma v_\varepsilon  ),\\
		& L_1 : =  - \varepsilon \alpha \mathcal P_\tau ( (\widetilde{\mathcal P_\sigma v_\varepsilon} \cdot \nablah \xi_\varepsilon) \mathcal P_\sigma v_\varepsilon) + \mathcal P_\tau (\dfrac{c^2(1+\varepsilon  \xi_\varepsilon - e^{\varepsilon \xi_\varepsilon})}{\varepsilon} \nablah \xi_\varepsilon^o),\\
		& L_2 : =  \mathcal P_\tau (c_1 (e^{-\varepsilon\alpha\xi_\varepsilon} -1 )(\mu \deltah  v_\varepsilon + \lambda\nablah \dvh v_\varepsilon )) . 
	\end{aligned}
\end{equation}
Also $ g_\varepsilon(t) =\int_{\mathbb T^2} \xi_\varepsilon \idxh $ satisfies, from \eqref{aw-per:003},
\begin{equation}\label{aw-per:average}
\begin{aligned}
	& \dfrac{d}{dt} g_\varepsilon = - \int_{\mathbb T^2} ( (\mathcal U_\varepsilon^o)_2 \cdot \nablah (\mathcal U^o_\varepsilon)_1 ) \idxh \\
	& ~~~~ = - \int_{\mathbb T^2} \mathcal L_2\bigl( \dfrac t \varepsilon \bigr) V_\varepsilon^o \cdot \nablah \mathcal L_1\bigl( \dfrac t \varepsilon \bigr) V_\varepsilon^o \idxh, \\
	& ~~~~ \text{with} ~~ g_{\varepsilon}(t=0) = \int_{\mathbb T^2} \xi_0 \idxh.  
\end{aligned}
\end{equation}

Now we will address the strong convergence of $ V_\varepsilon^o, g_\varepsilon $ as $ \varepsilon \rightarrow 0^+ $. Notice that $ L $ defined in \eqref{aw:linear-operator} is anti-symmetric with respect to $ A:= \mathrm{diag}( c^2, \gamma-1, \gamma - 1) $ (i.e., $ \int V^\top A L U \idx =  - \int U^\top A L V \idx $ ), linear and commutative with $ \partial_h \in \lbrace \partial_x, \partial_y \rbrace $. Consequently the standard $ H^s $ estimate of the solutions to equation \eqref{aw:def-group-operator} implies that the operator $ \mathcal L $ preserves the $ H^s $ norm, i.e.,
\begin{equation}\label{aw-per:norm-preserve}
\normh{\mathcal L(t) V}{\Hnorm{s}} \simeq \normh{V}{\Hnorm{s}} ~ \text{for} ~ t \in (0,\infty), s \in \mathbb N, V \in (\mathcal D'(\mathbb T^2))^3.
\end{equation} Therefore one has, as the consequence of \eqref{aw:regularity} and \eqref{aw-per:variable},
\begin{equation}\label{aw-per:norm-preserving}
	 \normh{V_\varepsilon^o}{L^\infty(0,T;H^2(\mathbb T^2))} + \normh{\mathcal U_\varepsilon^o}{L^\infty(0,T;H^2(\mathbb T^2))} + \normh{(\mathcal U_\varepsilon^o)_2}{L^2(0,T;H^3(\mathbb T^2))}
	< C.
\end{equation}
Then it is easy to see from \eqref{aw-per:average} that
\begin{equation}\label{aw-per:bounds-of-mass-average}
	\sup_{0\leq t\leq T} \lbrace \abs{g_\varepsilon}{} + \abs{\dfrac{d}{dt} g_\varepsilon}{} \rbrace \leq (1+T) \normh{\mathcal U_\varepsilon^o}{L^\infty(0,T;H^2(\mathbb T^2))}^2  < C.
\end{equation}
Therefore the Arzel\`{a}-Ascoli theorem implies that as $ \varepsilon\rightarrow 0^+ $,
\begin{equation}\label{aw-per:lim-average} g_\varepsilon \rightarrow g^o ~ \text{uniformly for some} ~ g^o \in C([0,T]),
\end{equation}
and $ \abs{g^o}{} < C $. On the other hand, after applying the H\"older and Sobolev embedding inequalities, \eqref{ue:total}, \eqref{aw-per:operators}, \eqref{aw-per:norm-preserving}, and \eqref{aw-per:bounds-of-mass-average} imply the following estimates:
\begin{equation}\label{aw-per:bounds-of-operators}
\begin{aligned}
		& \normh{\mathcal A_\varepsilon(D) V_\varepsilon^o }{L^\infty(0,T;L^2(\mathbb T^2))\cap L^2(0,T;H^1(\mathbb T^2)} \lesssim \normh{V_\varepsilon^o}{L^\infty(0,T;H^2(\mathbb T^2))} \\
		& ~~~~ + \normh{(\mathcal U_\varepsilon^o)_2}{L^2(0,T;H^3(\mathbb T^2))} < C, \\
		& \normh{Q_{\varepsilon,1}(\mathcal P_\sigma v_\varepsilon, V_\varepsilon^o)}{L^\infty(0,T; H^1(\mathbb T^2))} \lesssim \normh{V_\varepsilon^o}{L^\infty(0,T;H^2(\mathbb T^2))} \\
		& ~~~~ \times \norm{\mathcal P_\sigma v_\varepsilon}{L^\infty(0,T;H^2(\mathbb T^2))} < C,\\
		&  \normh{Q_{\varepsilon,2}(V_\varepsilon^o , V_\varepsilon^o)}{L^\infty(0,T; H^1(\mathbb T^2))} \lesssim \normh{V_\varepsilon^o}{L^\infty(0,T;H^2(\mathbb T^2))}^2 < C,\\
		&  \normh{Q_{\varepsilon,3}(g_\varepsilon, V_\varepsilon^o)}{L^\infty(0,T; H^1(\mathbb T^2))} \lesssim \sup_{0\leq t \leq T}\abs{g_\varepsilon}{} \\
		& ~~~~ \times \normh{V_\varepsilon^o}{L^\infty(0,T;H^2(\mathbb T^2))} < C,
\end{aligned}
\end{equation}
and
\begin{equation}\label{aw-per:bound-of-source}
	\begin{aligned}
		& \normh{K}{L^\infty(0,T;H^1(\mathbb T^2))} \lesssim \norm{\mathcal P_\sigma v_\varepsilon}{L^\infty(0,T;H^2(\mathbb T^2))}^2 < C ,  \\
		& \normh{L_1}{L^\infty(0,T;H^1(\mathbb T^2))} \lesssim \varepsilon \norm{\mathcal P_\sigma v_\varepsilon}{L^\infty(0,T;H^2(\mathbb T^2))}^2 \norm{\xi_\varepsilon}{L^\infty(0,T;H^2(\mathbb T^2))} \\
		& ~~~~ + \varepsilon e^{\varepsilon \norm{\xi_\varepsilon}{L^\infty(0,T;H^2(\mathbb T^2))}}(\norm{\xi_\varepsilon}{L^\infty(0,T;H^2(\mathbb T^2))}^2 + 1) < \varepsilon C,\\
		& \normh{L_2}{L^\infty(0,T; L^2(\mathbb T^2)) \cap L^2(0,T; H^1(\mathbb T^2))} \lesssim \varepsilon e^{\varepsilon \norm{\xi_\varepsilon}{L^\infty(0,T;H^2(\mathbb T^2))}}\\
		& ~~~~ \times ( \norm{\xi_\varepsilon}{L^\infty(0,T;H^2(\mathbb T^2))} + 1)\norm{v_\varepsilon}{L^\infty(0,T; H^2(\mathbb T^2)) \cap L^2(0,T; H^3(\mathbb T^2))} \\
		& ~~~~  < \varepsilon C.
	\end{aligned}
\end{equation}
Therefore, equation \eqref{aw-per:cm-oscillation} implies
\begin{equation*}
	\normh{\dt V_\varepsilon^o}{L^\infty(0,T; L^2(\mathbb T^2)) \cap L^2(0,T; H^1(\mathbb T^2))} < C.
\end{equation*}
Then the Aubin compactness lemma (see, i.e., \cite[Theorem 2.1]{Temam1984} and \cite{Simon1986,Chen2012}) implies that, there is
\begin{equation}\label{aw-per:limit-of-osc-regulartity}
\begin{gathered}
V^o \in L^\infty(0,T;H^2(\mathbb T^2)) \cap C(0,T;H^1(\mathbb T^2)) ~~  \text{with}\\
\dt V^o \in L^\infty(0,T;L^2(\mathbb T^2)) \cap L^2(0,T;H^1(\mathbb T^2)),
\end{gathered}
\end{equation}
such that as $ \varepsilon \rightarrow 0^+ $,
\begin{equation}\label{aw-per:limit-of-oscillation-variable}
\begin{gathered}
	V_\varepsilon^o \buildrel\ast\over\rightharpoonup V^o ~~ \text{weak-$\ast$ in} ~ L^\infty(0,T;H^2(\mathbb T^2)), \\
	V_\varepsilon^o \rightarrow V^o ~~ \text{in} ~ L^\infty(0,T;H^1(\mathbb T^2)) \cap C(0,T;H^1(\mathbb T^2)),\\
	\dt V_\varepsilon^o \buildrel\ast\over\rightharpoonup \dt V^o ~~ \text{weak-$\ast$ in} ~ L^\infty(0,T;L^2(\mathbb T^2)), \\
	 \dt V_\varepsilon^o \rightharpoonup \dt V^o ~~ \text{weakly in} ~ L^2(0,T;H^1(\mathbb T^2)).
\end{gathered}
\end{equation}
Also
\begin{equation}\label{aw-per:initial-of-oscillation}
	V_\varepsilon^o(t=0) = V^o(t=0) = \biggl( \begin{array}{c} \xi_0 - \int_{\mathbb T^2} \xi_0 \idxh \\ \mathcal P_\tau v_0 \end{array}\biggr).
\end{equation}
Consequently, as $ \varepsilon \rightarrow 0^+ $, since $ \mathcal L $ preserves the $ H^1 $ norm,
\begin{equation}\label{aw-per:strong-convergence-001}
	\mathcal U_\varepsilon^o -  \mathcal L\bigl( \dfrac t \varepsilon \bigr) V^o = \mathcal L \bigl( \dfrac t \varepsilon \bigr) \bigl(  V_\varepsilon^o  - V^o \bigr) \rightarrow 0 ~~ \text{in} ~ L^\infty(0,T;H^1(\mathbb T^2)).
\end{equation}
In conclusion, as $ \varepsilon \rightarrow 0^+ $,  \eqref{limit:007}, \eqref{aw-per:lim-average} and \eqref{aw-per:strong-convergence-001}  imply that
\begin{equation}\label{aw-per:strong-cnvgn-1}
\begin{aligned}
	&  \norm{v_\varepsilon - v_p - \mathcal L_2 \bigl(\dfrac t \varepsilon \bigr) V^o}{L^\infty(0,T;H^1(\mathbb T^2 \times 2 \mathbb T))} \\
	& ~~~~ + \norm{\xi_\varepsilon - g^o - \mathcal L_1 \bigl(\dfrac t \varepsilon \bigr) V^o }{L^\infty(0,T;H^1(\mathbb T^2\times 2 \mathbb T))} + \norm{g_\varepsilon - g^o}{L^\infty(0,T)}\\
	& ~~ \lesssim \norm{g_\varepsilon - g^o}{L^\infty(0,T)} + \normh{\mathcal U_\varepsilon^o -  \mathcal L\bigl( \dfrac t \varepsilon \bigr) V^o}{L^\infty(0,T;H^1(\mathbb T^2))} \\
	& ~~~~ + \norm{\mathcal P_\sigma v_\varepsilon - v_p}{L^\infty(0,T;H^1(\mathbb T^2 \times 2\mathbb T))} \rightarrow 0.
\end{aligned}
\end{equation}
Next, we will identify the limit equations of \eqref{aw-per:cm-oscillation} and \eqref{aw-per:average}; that is, the equation satisfied by $ V^o $ and $ g^o $.

\subsubsection*{The limit equations of oscillations}
In order to identify the limit equations of \eqref{aw-per:cm-oscillation} and \eqref{aw-per:average} as $ \varepsilon \rightarrow 0^+ $, we first introduce the Fourier representation of the operators defined in \eqref{aw-per:operators}. Notice that \eqref{aw:def-group-operator} implies $ \int_{\mathbb T^2} V_\varepsilon^o \idxh = \int_{\mathbb T^2} \mathcal U_\varepsilon^o \idxh = 0 $. It suffices to study in the space consisting of functions in $ \mathcal D'(\mathbb T^2)^3 $ with zero average.


Notice that, on the other hand, $ \mathcal U_\varepsilon^o $ is inside the orthogonal complement of the kernel of operator $ L $ with respect to the $ L^2 $-inner product, and thanks to \eqref{aw:def-group-operator}, so is $ V_\varepsilon^o $. In fact, $ (\ker{L})^\perp = \lbrace (q,u)  \subset \mathcal D'(\mathbb T^2) \times (\mathcal D'(\mathbb T^2) )^2 |  \int_{\mathbb T^2} q \idxh = 0,u = \nablah \psi, \psi \in \mathcal D'(\mathbb T^2) \rbrace  $. Next, we introduce, as in \cite{Danchin2002per} and \cite{Masmoudi2001}, the following basis of $ (\ker L)^\perp $: for $ \mathbf k \in 2\pi \mathbb Z^2 \setminus \lbrace (0,0)\rbrace  $,
\begin{equation}\label{aw-per:basis001}
	\begin{aligned}
		& V_{\mathbf{k}}^+ (\vech{x}) : = \dfrac{1}{\sqrt{\gamma-1 + c^2} \abs{\mathbf k}{}} \biggl( \begin{array}{c}
			\sqrt{\gamma - 1} \abs{\mathbf k}{} \\  -  c  \sg (\mathbf k) \mathbf k
		\end{array}\biggr) e^{i\mathbf k \cdot \vech{x}}, \\
		& V_{\mathbf{k}}^- (\vech{x}) : = \dfrac{1}{\sqrt{\gamma-1 + c^2} \abs{\mathbf k}{}} \biggl( \begin{array}{c}
			\sqrt{\gamma-1} \abs{\mathbf k}{} \\  + c \sg (\mathbf k) \mathbf k
		\end{array}\biggr) e^{i\mathbf k \cdot \vech{x}},
	\end{aligned}
\end{equation}
and the corresponding conjugates
\begin{equation*}
	\begin{aligned}
		& V_{\mathbf{k}}^{*,+} (\vech{x}) : = \dfrac{\sqrt{\gamma-1 + c^2} }{2c\sqrt{\gamma-1}\abs{\mathbf k}{}} \biggl( \begin{array}{c}
			c \abs{\mathbf k}{} \\  - \sqrt{\gamma-1}  \sg (\mathbf k) \mathbf k
		\end{array}\biggr) e^{i\mathbf k \cdot \vech{x}}, \\
		& V_{\mathbf{k}}^{*,-} (\vech{x}) : = \dfrac{\sqrt{\gamma-1 + c^2} }{2c\sqrt{\gamma-1}\abs{\mathbf k}{}} \biggl( \begin{array}{c}
			c \abs{\mathbf k}{} \\  + \sqrt{\gamma-1} \sg (\mathbf k) \mathbf k
		\end{array}\biggr) e^{i\mathbf k \cdot \vech{x}}.
	\end{aligned}
\end{equation*}
Here $ \sg (\mathbf{k}) $ is the generalized sign function for vector $ \mathbf k \in 2\pi \mathbb T^2 \setminus \lbrace(0,0)\rbrace $, defined as,
\begin{equation*}
	\sg (\mathbf{k}) :=
	\begin{cases}
		1 & \text{if and only if $ k_1 > 0 $ or $ k_1= 0, k_2 > 0 $},\\
		-1 & \text{otherwise}.
	\end{cases}
\end{equation*}
Notice that $ V_{\mathbf k}^\pm = \overline{V_{ - \mathbf k}^\pm}^c $, where, hereafter, $ \overline{\cdot}^c $ represents the complex conjugate.

Then by defining $ \varsigma = c \sqrt{\gamma-1} $, $ \lbrace V_{\mathbf k}^\pm \rbrace_{\mathbf k \in 2\pi \mathbb T^2 \setminus \lbrace(0,0)\rbrace} $ are the eigenfunctions of $ L $ with the eigenvalues $ \lbrace \mp i \varsigma \sg (\mathbf k) \abs{\mathbf k}{}\rbrace_{\mathbf k \in 2\pi \mathbb T^2 \setminus \lbrace(0,0)\rbrace} $, i.e.,
\begin{equation*}
	L V_{\mathbf k}^+ = - i \varsigma \sg (\mathbf k) \abs{\mathbf k}{}  V_{\mathbf k}^+, ~~ \text{and} ~~ L V_{\mathbf k}^- =  i \varsigma \sg (\mathbf k) \abs{\mathbf k}{}  V_{\mathbf k}^-, ~ \mathbf k \in 2\pi \mathbb T^2 \setminus \lbrace(0,0)\rbrace.
\end{equation*}
Also any $ V \in (\ker L)^\perp $ can be represented in terms of Fourier series as
\begin{equation}\label{aw-per:fourier-rprsnt}
V = \sum_{\mathbf k \in 2\pi \mathbb T^2 \setminus \lbrace (0,0)\rbrace} \bigl( a_{\mathbf k}^+ V_{\mathbf k}^+ +   a_{\mathbf k}^- V_{\mathbf k}^- \bigr)  \end{equation}
with the Fourier coefficients $ \lbrace a_{\mathbf k}^\pm 
 \rbrace_{\mathbf k \in 2\pi \mathbb T^2 \setminus \lbrace (0,0) \rbrace} $ determined by
 \begin{equation}\label{aw-per:fourier-rprsnt01}
	a_{\mathbf k}^\pm = \langle V ,{V_{\mathbf k}^{*,\pm}}\rangle_{\mathbb C}:= \int_{\mathbb T^2} V \cdot \overline{V_{\mathbf k}^{*,\pm}}^c \idxh.
\end{equation}
If $ V $ is real-valued, $ a_{-\mathbf k}^{\pm} = \overline{a_{\mathbf k}^{\pm}}^c $.
In addition,
one has
\begin{equation}\label{20July2018}
	\mathcal L(s) V = \sum_{\mathbf k\in 2\pi \mathbb T^2 \setminus \lbrace (0,0)\rbrace} \bigl( a_{\mathbf k}^+ V_{\mathbf k}^+ e^{i\varsigma \sg (\mathbf k) \abs{\mathbf k}{}s} + a_{\mathbf k}^- V_{\mathbf k}^- e^{-i\varsigma\sg (\mathbf k) \abs{\mathbf k}{}s}\bigr) .
\end{equation}

Denote by $ \mathcal P_{(\ker L)^\perp}: (\mathcal D'(\mathbb T^2))^3 \mapsto (\ker L)^\perp \subset (\mathcal D'(\mathbb T^2))^3 $, being the projection operator on $ (\ker L)^\perp $ with respect to the $ L^2 $-inner product. Then for any $ q \in \mathcal D'(\mathbb T^2) $ and $ u \in (\mathcal D'(\mathbb T^2\times 2 \mathbb T))^2 $, it satisfies,
\begin{equation}\label{aw-per:projection}
	\mathcal P_{(\ker L)^\perp} \biggl(\begin{array}{c}
		q \\ \int_0^1 u \,dz
	\end{array} \biggr) =  \biggl( \begin{array}{c}
		q - \int_{\mathbb T^2} q \idxh \\ \mathcal P_\tau  \int_0^1 u\,dz
	\end{array} \biggr)  = \biggl( \begin{array}{c}
		q - \int_{\mathbb T^2} q \idxh \\ \mathcal P_\tau u
	\end{array} \biggr).
\end{equation}
Moreover, as in \eqref{aw-per:fourier-rprsnt} and \eqref{aw-per:fourier-rprsnt01}, one has
\begin{equation}\label{aw-per:projection-01}
	\mathcal P_{(\ker L)^\perp} \biggl(\begin{array}{c}
		q \\ \int_0^1 u \,dz
	\end{array} \biggr) = \sum_{\mathbf k \in 2\pi \mathbb T^2 \setminus \lbrace (0,0)\rbrace} \bigl( b_{\mathbf k}^+ V_{\mathbf k}^+ +   b_{\mathbf k}^- V_{\mathbf k}^- \bigr),
\end{equation}
with $ \lbrace b_{\mathbf k}^\pm  \rbrace_{\mathbf k \in 2\pi \mathbb T^2 \setminus \lbrace (0,0) \rbrace} $ determined by
\begin{equation*}
	b_{\mathbf k}^\pm = \langle\biggl(\begin{array}{c}
		q \\ \int_0^1 u \,dz
	\end{array} \biggr) ,{V_{\mathbf k}^{*,\pm}}\rangle_{\mathbb C}= \int_{\mathbb T^2} \biggl(\begin{array}{c}
		q \\ \int_0^1 u \,dz
	\end{array} \biggr) \cdot \overline{V_{\mathbf k}^{*,\pm}}^c \idxh.
\end{equation*}

We will now investigate the action of the operators in \eqref{aw-per:operators} on $ V_{\mathbf k}^\pm $. Without further mentioning, we will use the representations \eqref{aw-per:projection} and \eqref{aw-per:projection-01}, below. To shorten the notations, the multi-indices $ \mathbf k, \mathbf l, \mathbf m, \cdots $ in the following calculations are always subject to the set $ 2\pi \mathbb T^2 \setminus \lbrace (0,0)\rbrace $.
For instance,
\begin{equation*}
	\sum_{\mathbf k} = \sum_{\mathbf k \in 2\pi \mathbb T^2 \setminus \lbrace (0,0)\rbrace}, ~~~~ \text{etc}.
\end{equation*}

{\par \noindent \bf Calculation of $ \mathcal A_{\varepsilon}(D) $. } To calculate $ \mathcal A_{\varepsilon}(D) V_{\mathbf k}^\pm $, notice that,
\begin{align*}
	& c_1 \bigl( \mu \deltah \mathcal L_2 (\frac{t}{\varepsilon}) V_{\mathbf k}^\pm + \lambda \nablah \dvh \mathcal L_2 (\frac{t}{\varepsilon}) V_{\mathbf k}^\pm \bigr)\\
	& ~~ = \pm c_1(\mu+\lambda) \dfrac{c\sg (\mathbf k) \abs{\mathbf k}{} \mathbf k }{\sqrt{\gamma-1+c^2}} e^{\pm i \varsigma \sg(\mathbf k) \abs{\mathbf k}{} \frac{t}{\varepsilon}} e^{i\mathbf k\cdot \vech x}.
\end{align*}
Denote by
\begin{align*}
	& \biggl(\begin{array}{c}
			0 \\ c_1 ( \mu \deltah \mathcal L_2\bigl( \dfrac{t}{\varepsilon} \bigr) V_{\mathbf k}^\pm + \lambda \nablah \dvh \mathcal L_2\bigl( \dfrac{t}{\varepsilon} \bigr) V_{\mathbf k}^\pm)
		\end{array} \biggr) =: A_{\mathbf k} = \alpha_{\mathbf k} V_{\mathbf k}^\pm + \beta_{\mathbf k} V_{\mathbf k}^\mp.
\end{align*}
Then
\begin{align*}
	& \alpha_{\mathbf k} = \int_{\mathbb T^2} A_{\mathbf k} \cdot \overline{V_{\mathbf k}^{*,\pm}}^c \idxh  = \dfrac{\sqrt{\gamma-1 + c^2} }{2c\sqrt{\gamma-1}\abs{\mathbf k}{}}e^{\pm i \varsigma \sg(\mathbf k) \abs{\mathbf k}{} \frac{t}{\varepsilon}}\\
	& ~~~~ \times \biggl( \begin{array}{c}
		0 \\
		\pm c_1(\mu+\lambda) \dfrac{c\sg (\mathbf k) \abs{\mathbf k}{} \mathbf k }{\sqrt{\gamma-1+c^2}} e^{\pm i \varsigma \sg(\mathbf k) \abs{\mathbf k}{} \frac{t}{\varepsilon}}
	\end{array} \biggr) \cdot  \biggl( \begin{array}{c}
			c \abs{\mathbf k}{} \\  \mp \sqrt{\gamma-1}  \sg (\mathbf k) \mathbf k
		\end{array}\biggr)\\
		& ~~~~ = - \dfrac{c_1(\mu+\lambda)}{2}\abs{\mathbf k}{2} e^{\pm i\varsigma \sg(\mathbf k) \abs{\mathbf k}{} \frac{t}{\varepsilon}}, ~~~~ ~~~~ \text{and similarly}\\
		& \beta_{\mathbf k} = \int_{\mathbb T^2} A_{\mathbf k} \cdot \overline{V_{\mathbf k}^{*,\mp}}^c \idxh = \dfrac{c_1(\mu+\lambda)}{2}\abs{\mathbf k}{2} e^{\pm i\varsigma \sg(\mathbf k) \abs{\mathbf k}{} \frac{t}{\varepsilon}}.
\end{align*}
Therefore
\begin{align*}
	& \mathcal A_{\varepsilon}(D) V_{\mathbf k}^\pm = \mathcal L\bigl( - \dfrac{t}{\varepsilon} \bigr) A_{\mathbf k} = - \dfrac{c_1(\mu+\lambda)}{2} \abs{\mathbf k}{2} V_{\mathbf k}^\pm \\
	& ~~~~ +  \dfrac{c_1(\mu+\lambda)}{2} \abs{\mathbf k}{2}  V_{\mathbf k}^{\mp}e^{\pm 2 i \varsigma \sg(\mathbf k) \mathbf k \frac{t}{\varepsilon}}.
\end{align*}

{\par \noindent \bf Calculation of $ \mathcal Q_{\varepsilon, 1} $. } To calculate $ \mathcal Q_{\varepsilon,1}(\mathcal P_\sigma u, V_{\mathbf k}^\pm ) $,
we first represent $ \mathcal P_\sigma u  $ as, after expanding it as Fourier series in the horizontal direction,
\begin{equation}\label{aw-per:fourier-u}
	  \mathcal P_\sigma u = \sum_{\mathbf k} u_{\mathbf k}(z,t) e^{i\mathbf k\cdot \vech{x}}, ~~ \int_0^1 \mathcal P_\sigma u  \,dz = \sum_{\mathbf k}\overline{u}_{\mathbf k} e^{i\mathbf k\cdot \vech{x}},
\end{equation}
satisfying
\begin{equation}\label{aw-per:fourier-u-01}
	\overline{u}_{\mathbf k} \cdot \mathbf k = 0.
\end{equation}
Here, recall that $ \overline{u}_{\mathbf k}= \int_0^1  u_{\mathbf k} \,dz $. Then the relations \eqref{aw-per:projection} and \eqref{aw-per:projection-01} allow us the write down the following:
\begin{align*}
	& \biggl( \begin{array}{c}
		\dvh ( \mathcal L_1 \bigl(\dfrac{t}{\varepsilon}\bigr) V_{\mathbf k}^\pm \int_0^1 \mathcal P_\sigma u \,dz )	\\
		\mathcal P_\tau ( \mathcal P_\sigma u \cdot \nablah \mathcal L_2\bigl(\dfrac t \varepsilon\bigr) V_{\mathbf k}^\pm + \mathcal L_2\bigl(\dfrac t \varepsilon\bigr) V_{\mathbf k}^\pm \cdot \nablah \mathcal P_\sigma u )
		\end{array} \biggr) \\
	& = \mathcal P_{(\ker L)^\perp} \biggl( \begin{array}{c}
		\dvh ( \mathcal L_1 \bigl(\dfrac{t}{\varepsilon}\bigr) V_{\mathbf k}^\pm \int_0^1 \mathcal P_\sigma u \,dz )	\\
		\int_0^1 \mathcal P_\sigma u \,dz \cdot \nablah \mathcal L_2\bigl(\dfrac t \varepsilon\bigr) V_{\mathbf k}^\pm + \mathcal L_2\bigl(\dfrac t \varepsilon\bigr) V_{\mathbf k}^\pm \cdot \nablah \int_0^1 \mathcal P_\sigma u \,dz )
		\end{array} \biggr)\\
	& =: \sum_{ \mathbf  m}\bigl( \xi_{\mathbf m} V_{\mathbf m}^\pm +  \zeta_{\mathbf m}  V_{\mathbf m}^\mp \bigr).
\end{align*}
Denote by
\begin{align*}
	& B_{\mathbf k} := \biggl( \begin{array}{c}
		\dvh ( \mathcal L_1 \bigl(\dfrac{t}{\varepsilon}\bigr) V_{\mathbf k}^\pm \int_0^1 \mathcal P_\sigma u \,dz )	\\
		\int_0^1 \mathcal P_\sigma u \,dz \cdot \nablah \mathcal L_2\bigl(\dfrac t \varepsilon\bigr) V_{\mathbf k}^\pm + \mathcal L_2\bigl(\dfrac t \varepsilon\bigr) V_{\mathbf k}^\pm \cdot \nablah \int_0^1 \mathcal P_\sigma u \,dz )
		\end{array} \biggr) \\
		&  =  \sum_{\mathbf l} \biggl( \begin{array}{c}
			i  \dfrac{\sqrt{\gamma-1}}{\sqrt{\gamma-1+c^2}} (\mathbf k + \mathbf l) \cdot \hat u_{\mathbf l} \\
			i \dfrac{\mp c\sg (\mathbf k)}{\sqrt{\gamma-1+c^2}\abs{\mathbf k}{}} ((\hat u_{\mathbf l} \cdot \mathbf k) \mathbf k + (\mathbf k \cdot \mathbf l) \hat u_{\mathbf l})
			\end{array}\biggr)
			e^{\pm i \varsigma \sg(\mathbf k)\abs{\mathbf k}{} \frac{t}{\varepsilon}}
			e^{i (\mathbf k + \mathbf l) \cdot \vech x}.
\end{align*}
Then
\begin{align*}
	& \xi_{\mathbf m} = \int_{\mathbb T^2} B_{\mathbf k} \cdot \overline{V_{\mathbf m}^{*,\pm}}^c \idxh = \dfrac{\sqrt{\gamma-1 + c^2} }{2c\sqrt{\gamma-1}\abs{\mathbf m}{}} e^{\pm i \varsigma \sg(\mathbf k) \abs{\mathbf k}{} \frac{t}{\varepsilon}} \\
	& \times \biggl( \begin{array}{c}
			i  \dfrac{\sqrt{\gamma-1}}{\sqrt{\gamma-1+c^2}} (\mathbf k + \mathbf l) \cdot \hat u_{\mathbf l} \\
			i \dfrac{\mp c\sg (\mathbf k)}{\sqrt{\gamma-1+c^2}\abs{\mathbf k}{}} ((\hat u_{\mathbf l} \cdot \mathbf k) \mathbf k + (\mathbf k \cdot \mathbf l) \hat u_{\mathbf l})
			\end{array}\biggr) \cdot  \biggl( \begin{array}{c}
			c \abs{\mathbf m}{} \\  \mp \sqrt{\gamma-1} \sg (\mathbf m) \mathbf m
		\end{array}\biggr) \\
	&  = \dfrac{i}{2} (\mathbf k \cdot \hat u_{\mathbf l}) \bigl( 1 + (\mathbf m \cdot\mathbf k + \mathbf k \cdot \mathbf l)\dfrac{\sg(\mathbf m)\sg(\mathbf k)}{\abs{\mathbf m}{}\abs{\mathbf k}{}} \bigr) e^{\pm i \varsigma \sg(\mathbf k) \abs{\mathbf k}{} \frac{t}{\varepsilon}}, ~~ \text{and similarly}
  \\
	& \zeta_{\mathbf m} = \int_{\mathbb T^2} B_{\mathbf k} \cdot \overline{V_{\mathbf m}^{*,\mp}}^c \idxh \\
	&  = \dfrac{i}{2} (\mathbf k \cdot \hat u_{\mathbf l}) \bigl( 1 - (\mathbf m \cdot\mathbf k + \mathbf k \cdot \mathbf l)\dfrac{\sg(\mathbf m)\sg(\mathbf k)}{\abs{\mathbf m}{}\abs{\mathbf k}{}} \bigr) e^{\pm i \varsigma \sg(\mathbf k) \abs{\mathbf k}{} \frac{t}{\varepsilon}},
\end{align*}
where $ \mathbf l = \mathbf m - \mathbf k $ and we have used \eqref{aw-per:fourier-u-01}.
Therefore
\begin{align*}
	& \mathcal Q_{\varepsilon,1}(\mathcal P_\sigma u, V_{\mathbf k}^\pm ) = \mathcal L\bigl(- \dfrac{t}{\varepsilon}\bigr) \sum_{ \mathbf  m}\bigl( \xi_{\mathbf m} V_{\mathbf m}^\pm +  \zeta_{\mathbf m}  V_{\mathbf m}^\mp \bigr) \\
	& = \dfrac{ i}{2} \sum_{\mathbf m - \mathbf l = \mathbf k} \biggl(
	\bigl( (\mathbf k \cdot \hat u_{\mathbf l}) ( 1 + \dfrac{\sg(\mathbf m) \sg (\mathbf k ) }{\abs{\mathbf m}{}\abs{\mathbf k}{}} \mathbf k \cdot  ( \mathbf m +\mathbf l) )  \bigr)   V_{\mathbf m}^\pm \\
	& ~~~~ ~~~~ \times e^{\pm i \varsigma (\sg (\mathbf k) \abs{\mathbf k}{} - \sg(\mathbf m) \abs{\mathbf m}{}) \frac{t}{\varepsilon} }\\
	& ~~~~ + \bigl( (\mathbf k \cdot \hat u_{\mathbf l}) ( 1 -\dfrac{\sg(\mathbf m) \sg (\mathbf k ) }{\abs{\mathbf m}{}\abs{\mathbf k}{}} \mathbf k \cdot  ( \mathbf m +\mathbf l) )  \bigr)    V_{\mathbf m}^\mp\\
	& ~~~~ ~~~~ \times e^{\pm i \varsigma (\sg (\mathbf k) \abs{\mathbf k}{} + \sg(\mathbf m) \abs{\mathbf m}{}) \frac{t}{\varepsilon}}\biggr).
\end{align*}

{\par \noindent \bf Calculation of $ \mathcal Q_{\varepsilon, 2} $. }To calculate $ \mathcal Q_{\varepsilon,2}(V_{\mathbf k}^{\pm}, V_{\mathbf l}^{\pm}) $ and $ \mathcal Q_{\varepsilon,2}(V_{\mathbf k}^{\pm}, V_{\mathbf l}^{\mp}) $, as before, notice that, for $ V_1, V_2 \in (\mathcal D'(\mathbb T^2))^3 $, the relation \eqref{aw-per:projection} implies the following identity:
\begin{align*}
	& \biggl( \begin{array}{c}
			\mathcal L_2 \bigl(\dfrac{t}{\varepsilon}\bigr) V_{1} \cdot \nablah \mathcal L_1 \bigl(\dfrac{t}{\varepsilon}\bigr) V_{2} - \int_{\mathbb T^2} \mathcal L_2 \bigl(\dfrac{t}{\varepsilon}\bigr) V_{1} \cdot \nablah \mathcal L_1 \bigl(\dfrac{t}{\varepsilon}\bigr) V_{2} \idxh \\
			\mathcal P_\tau (\mathcal L_2\bigl(\dfrac t \varepsilon\bigr) V_{1} \cdot \nablah \mathcal L_2\bigl(\dfrac t \varepsilon\bigr) V_{2} + c^2 \mathcal L_1\bigl(\dfrac{t}{\varepsilon}\bigr) V_{1} \nablah \mathcal L_1 \bigl(\dfrac{t}{\varepsilon}\bigr) V_{2} )
		\end{array} \biggr) \\
		& = \mathcal P_{(\ker L)^\perp}  \biggl( \begin{array}{c}
			\mathcal L_2 \bigl(\dfrac{t}{\varepsilon}\bigr) V_{1} \cdot \nablah \mathcal L_1 \bigl(\dfrac{t}{\varepsilon}\bigr) V_{2} \\
			\mathcal L_2\bigl(\dfrac t \varepsilon\bigr) V_{1} \cdot \nablah \mathcal L_2\bigl(\dfrac t \varepsilon\bigr) V_{2} + c^2 \mathcal L_1\bigl(\dfrac{t}{\varepsilon}\bigr) V_{1} \nablah \mathcal L_1 \bigl(\dfrac{t}{\varepsilon}\bigr) V_{2}
		\end{array} \biggr).
\end{align*}
Thus following the same arguments as in the calculation of $ \mathcal Q_{\varepsilon, 1} $ yields:
\begin{align*}
	& \mathcal Q_{\varepsilon,2}(V_{\mathbf k}^{\pm}, V_{\mathbf l}^{\pm}) = \mp  \dfrac{i c}{2 \sqrt{\gamma-1 + c^2}}\bigl(
	\dfrac{\sg(\mathbf k) \mathbf k \cdot \mathbf l}{\abs{\mathbf k}{}} + \dfrac{\sg(\mathbf m) \sg(\mathbf k) \sg(\mathbf l) (\mathbf l \cdot \mathbf k)(\mathbf l \cdot \mathbf m)}{\abs{\mathbf m}{}\abs{\mathbf k}{}\abs{\mathbf l}{}} \\
	& ~~~~ ~~~~ + \dfrac{(\gamma-1)\sg (\mathbf m) \mathbf l \cdot \mathbf m}{\abs{\mathbf m}{}}
	\bigr) V_{\mathbf m}^\pm e^{\pm i \varsigma ( \sg (\mathbf k) \abs{ \mathbf k}{} +  \sg (\mathbf l) \abs{ \mathbf l}{} -  \sg (\mathbf m) \abs{ \mathbf m}{} )\frac t \varepsilon} \\
	& ~~~~ ~~~~  \mp  \dfrac{i c}{2 \sqrt{\gamma-1 + c^2}}\bigl(
	\dfrac{\sg(\mathbf k) \mathbf k \cdot \mathbf l}{\abs{\mathbf k}{}} - \dfrac{\sg(\mathbf m) \sg(\mathbf k) \sg(\mathbf l) (\mathbf l \cdot \mathbf k)(\mathbf l \cdot \mathbf m)}{\abs{\mathbf m}{}\abs{\mathbf k}{}\abs{\mathbf l}{}} \\
	& ~~~~ ~~~~ - \dfrac{(\gamma-1)\sg (\mathbf m) \mathbf l \cdot \mathbf m}{\abs{\mathbf m}{}}
	\bigr)  V_{\mathbf m}^\mp e^{\pm i \varsigma ( \sg (\mathbf k) \abs{ \mathbf k}{} +  \sg (\mathbf l) \abs{ \mathbf l}{} +  \sg (\mathbf m) \abs{ \mathbf m}{} )\frac t \varepsilon}, \\
	& \mathcal Q_{\varepsilon,2}(V_{\mathbf k}^{\pm}, V_{\mathbf l}^{\mp}) = \mp  \dfrac{i c}{2 \sqrt{\gamma-1 + c^2}}\bigl(
	\dfrac{\sg(\mathbf k) \mathbf k \cdot \mathbf l}{\abs{\mathbf k}{}} - \dfrac{\sg(\mathbf m) \sg(\mathbf k) \sg(\mathbf l) (\mathbf l \cdot \mathbf k)(\mathbf l \cdot \mathbf m)}{\abs{\mathbf m}{}\abs{\mathbf k}{}\abs{\mathbf l}{}} \\
	& ~~~~ ~~~~ + \dfrac{(\gamma-1)\sg (\mathbf m) \mathbf l \cdot \mathbf m}{\abs{\mathbf m}{}}
	\bigr) V_{\mathbf m}^\pm  e^{\pm i \varsigma ( \sg (\mathbf k) \abs{ \mathbf k}{} -  \sg (\mathbf l) \abs{ \mathbf l}{} -  \sg (\mathbf m) \abs{ \mathbf m}{} )\frac t \varepsilon}\\
	& ~~~~ ~~~~  \mp  \dfrac{i c}{2 \sqrt{\gamma-1 + c^2}}\bigl(
	\dfrac{\sg(\mathbf k) \mathbf k \cdot \mathbf l}{\abs{\mathbf k}{}} + \dfrac{\sg(\mathbf m) \sg(\mathbf k) \sg(\mathbf l) (\mathbf l \cdot \mathbf k)(\mathbf l \cdot \mathbf m)}{\abs{\mathbf m}{}\abs{\mathbf k}{}\abs{\mathbf l}{}} \\
	& ~~~~ ~~~~ - \dfrac{(\gamma-1)\sg (\mathbf m) \mathbf l \cdot \mathbf m}{\abs{\mathbf m}{}}
	\bigr)  V_{\mathbf m}^\mp e^{\pm i \varsigma ( \sg (\mathbf k) \abs{ \mathbf k}{} -  \sg (\mathbf l) \abs{ \mathbf l}{} +  \sg (\mathbf m) \abs{ \mathbf m}{} )\frac t \varepsilon},
\end{align*}
where  $ \mathbf m = \mathbf k + \mathbf l $.

{\par \noindent \bf Calculation of $ \mathcal Q_{\varepsilon, 3} $.} This will be similar to the calculation of $ \mathcal A_\varepsilon (D) $. The result is
\begin{align*}
	& \mathcal Q_{\varepsilon,3}(g, V_{\mathbf k}^\pm) = \mp \dfrac{ic g \sqrt{\gamma-1}}{2} \sg (\mathbf k) \abs{\mathbf k}{} V_{\mathbf k}^\pm \\
	& ~~~~ \pm \dfrac{ic g \sqrt{\gamma-1}}{2} \sg (\mathbf k) \abs{\mathbf k}{} V_{\mathbf k}^\mp e^{\pm 2 i \varsigma \sg (\mathbf k) \abs{\mathbf k}{} \frac{t}{\varepsilon}}.
\end{align*}

{\par \noindent \bf  The limits of the operators $ \mathcal A_\varepsilon(D), \mathcal Q_{\varepsilon, 1}, \mathcal Q_{\varepsilon, 2}, \mathcal Q_{\varepsilon, 3} $ as $ \varepsilon \rightarrow 0^+ $. }
For fixed $ \mathbf k, \mathbf l $, in the sense of distribution, we have the following: as $ \varepsilon \rightarrow 0^+ $,
\begin{align}
	& \mathcal A_{\varepsilon}(D) V_{\mathbf k}^\pm \rightharpoonup - \dfrac{(\mu+\lambda)c_1}{2} \abs{\mathbf k}{2} V_{\mathbf k}^\pm = \dfrac{(\mu+\lambda)c_1}{2} \deltah V_{\mathbf k}^\pm =: \mathcal A(D) V_{\mathbf k}^\pm  , {\label{aw-lim:op01}}\\
	& \mathcal Q_{\varepsilon,1}(\mathcal P_\sigma u, V_{\mathbf k}^\pm ) \rightharpoonup i \sum_{ \tiny \begin{array}{c}\mathbf m - \mathbf l = \mathbf k \\ \sg(\mathbf k) \abs{\mathbf k}{} = \sg(\mathbf m) \abs{\mathbf m}{}  \end{array}}
	 \dfrac{ (\hat u_{\mathbf l} \cdot  \mathbf k) ( \mathbf m \cdot \mathbf k)}{\abs{\mathbf k}{2}}  V_{\mathbf m}^\pm {\nonumber} \\
	& ~~~~ ~~~~ + i \sum_{\tiny \begin{array}{c}\mathbf m - \mathbf l = \mathbf k \\ \sg(\mathbf k) \abs{\mathbf k}{} = - \sg(\mathbf m) \abs{\mathbf m}{}  \end{array}} \dfrac{ (\hat u_{\mathbf l} \cdot  \mathbf k) ( \mathbf m \cdot \mathbf k)}{\abs{\mathbf k}{2}}   V_{\mathbf m}^\mp = : \mathcal Q_1(\mathcal P_\sigma u, V_{\mathbf k}^\pm) , {\label{aw-lim:op02}} \\
	& \mathcal Q_{\varepsilon, 2}( V_{\mathbf k}^\pm,  V_{\mathbf l}^\pm)  \rightharpoonup  \mp  \dfrac{i c(\gamma+1)}{2 \sqrt{\gamma-1 + c^2}} \sg(\mathbf l) \abs{\mathbf l}{} V_{\mathbf m}^\pm =: \mathcal Q_2( V_{\mathbf k}^\pm,  V_{\mathbf l}^\pm) , {\label{aw-lim:op03}} \\
	& \text{where} ~ \mathbf m = \mathbf k + \mathbf l, ~ \sg(\mathbf k) \abs{\mathbf k}{} + \sg(\mathbf l) \abs{\mathbf l}{} = \sg(\mathbf m) \abs{\mathbf m}{}, ~  \text{$ \mathbf k $ and $ \mathbf l $ are co-linear},  {\nonumber} \\ 
	& \mathcal Q_{\varepsilon, 2}( V_{\mathbf k}^\pm,  V_{\mathbf l}^\mp) \rightharpoonup 0 =: \mathcal Q_2( V_{\mathbf k}^\pm,  V_{\mathbf l}^\mp) , {\label{aw-lim:op04}} \\
	& \mathcal Q_{\varepsilon,3}(g_\varepsilon , V_{\mathbf k}^\pm) \rightharpoonup  \mp \dfrac{ic g \sqrt{\gamma-1}}{2} \sg (\mathbf k) \abs{\mathbf k}{} V_{\mathbf k}^\pm =: \mathcal Q_3 (g, V_{\mathbf k}^\pm), {\label{aw-lim:op05}}
\end{align}
where we have used the facts that
\begin{gather*}
\bigl\lbrace \mathbf m = \mathbf k + \mathbf l, ~ \sg(\mathbf k) \abs{\mathbf k}{} + \sg(\mathbf l) \abs{\mathbf l}{} = \sg(\mathbf m) \abs{\mathbf m}{} \bigr\rbrace \\
= \bigl\lbrace  \mathbf k, \mathbf l,\mathbf m ~ \text{are co-linear, and} ~
	\mathbf l \cdot \mathbf k = \sg(\mathbf l) \sg(\mathbf k) \abs{\mathbf l }{}\abs{\mathbf k}{}, \\
	 \mathbf l \cdot \mathbf m = \sg(\mathbf l) \sg(\mathbf m) \abs{\mathbf l }{}\abs{\mathbf m}{} \bigr\rbrace , \\
	 \text{and} ~  \bigl\lbrace \mathbf m = \mathbf k + \mathbf l, ~ \sg(\mathbf k) \abs{\mathbf k}{} + \sg(\mathbf l) \abs{\mathbf l}{} = - \sg(\mathbf m) \abs{\mathbf m}{} \bigr\rbrace = \emptyset, \\
	 \bigl\lbrace \mathbf m = \mathbf k + \mathbf l, ~ \sg(\mathbf k) \abs{\mathbf k}{} - \sg(\mathbf l) \abs{\mathbf l}{} = \pm \sg(\mathbf m) \abs{\mathbf m}{} \bigr\rbrace = \emptyset.
\end{gather*}

{\par \noindent \bf  The limit equations. }
Now we have prepared enough to identify the limit equations of equations \eqref{aw-per:cm-oscillation} and \eqref{aw-per:average} as $ \varepsilon \rightarrow 0^+ $.

We rewrite equation \eqref{aw-per:cm-oscillation} in the following fashion:
\begin{equation}\label{aw-per:equation-new-fashion}
\begin{aligned}
	& \dt V_\varepsilon^o + \mathcal Q_{\varepsilon,1}( v_p, V^o) + \mathcal Q_{\varepsilon,2}( V^o, V^o) + \mathcal Q_{\varepsilon, 3}(g^o, V^o) - \mathcal A_\varepsilon(D) V^o\\
	& ~~ = - \mathcal Q_{\varepsilon,1}(\mathcal P_\sigma v_\varepsilon - v_p, V_\varepsilon^o) - \mathcal Q_{\varepsilon,1}( v_p, V_\varepsilon^o - V^o) \\
	& ~~~~ - \mathcal Q_{\varepsilon,2} ( V^o_\varepsilon - V^o, V_\varepsilon^o) - \mathcal Q_{\varepsilon,2} (V^o, V_\varepsilon^o - V^o) \\
	& ~~~~ - \mathcal Q_{\varepsilon, 3}(g_\varepsilon - g^o, V_\varepsilon^o)
	- \mathcal Q_{\varepsilon,3}(g^o, V_\varepsilon^o - V^o) \\
	& ~~~~ + \mathcal A_\varepsilon(D) (V_\varepsilon^o - V^o) + \mathcal L\bigl( - \dfrac t \varepsilon \bigr) \biggl( \begin{array}
		{c} 0 \\ K + L_1 + L_2
	\end{array}\biggr).
\end{aligned}
\end{equation}
Then, from the regularity in \eqref{limit:functions} and \eqref{aw-per:limit-of-osc-regulartity}, the weak convergence of $ \dt V^o_\varepsilon $ in \eqref{aw-per:limit-of-oscillation-variable}, and the convergence of operators in \eqref{aw-lim:op01}, \eqref{aw-lim:op02}, \eqref{aw-lim:op03}, \eqref{aw-lim:op04} and \eqref{aw-lim:op05}, as $ \varepsilon \rightarrow 0^+ $, the left-hand side of \eqref{aw-per:equation-new-fashion} converges in the sense of distribution to
\begin{equation*}
	\dt V^o + \mathcal Q_1(v_p, V^o) + \mathcal Q_2(V^o, V^o) + \mathcal Q_3(g^o, V^o) - \mathcal A(D) V^o.
\end{equation*}
Indeed, one can replace $ v_p, V^o $ with their finite dimensional Fourier truncations on the left-hand side of \eqref{aw-per:equation-new-fashion}, and similar estimates as in \eqref{aw-per:bounds-of-operators} of the operators for such truncations imply that the actions of the operators on the remainders are bounded by certain norms of the remainders uniformly in $ \varepsilon $. Then by letting $ \varepsilon $ go to zero, since the truncations approximate the identity operator, one can show the convergence of the actions of the operators.

We claim that the right-hand side of \eqref{aw-per:equation-new-fashion} converges in the sense of distribution to $ 0 $. Indeed, from the definition of the operators in \eqref{aw-per:operators} and the norm preserving property \eqref{aw-per:norm-preserve}, applying  the H\"older and Sobolev embedding inequalities implies that, as $ \varepsilon \rightarrow 0^+ $,
\begin{align*}
	& \normh{\mathcal Q_{\varepsilon,1}(\mathcal P_\sigma v_\varepsilon - v_p, V_\varepsilon^o)}{L^\infty(0,T; L^2(\mathbb T^2))} \lesssim \normh{ \mathcal L(\frac{t}{\varepsilon}) V_\varepsilon^o}{L^\infty(0,T;L^\infty(\mathbb T^2))}\\
	& ~~~~ \times \norm{\nablah (\mathcal P_\sigma v_\varepsilon - v_p )}{L^\infty(0,T;L^2(\mathbb T^2))} + \normh{\nablah \mathcal L(\frac{t}{\varepsilon}) V_\varepsilon^o}{L^\infty(0,T;L^4(\mathbb T^2))} \\
	& ~~~~ \times \norm{\mathcal P_\sigma v_\varepsilon - v_p}{L^\infty(0,T;L^4(\mathbb T^2))} \lesssim \normh{V_\varepsilon^o}{L^\infty(0,T;H^2(\mathbb T^2))}\\
	& ~~~~ \times \norm{\mathcal P_\sigma v_\varepsilon - v_p}{L^\infty(0,T;H^1(\mathbb T^2))} \rightarrow 0, \\
	& \normh{\mathcal Q_{\varepsilon,1}(v_p, V_\varepsilon^o - V^o)}{L^\infty(0,T;L^2(\mathbb T^2))} \lesssim \normh{V_\varepsilon^o - V^o}{L^\infty(0,T;H^1(\mathbb T^2))}\\
	& ~~~~ \times \norm{v_p}{L^\infty(0,T;H^2(\mathbb T^2))} \rightarrow 0, \\
	& \normh{\mathcal Q_{\varepsilon,2}(V_\varepsilon^o- V^o,V_\varepsilon^o )}{L^\infty(0,T;L^2(\mathbb T^2))} \lesssim \normh{\mathcal L(\frac{t}{\varepsilon})(V_\varepsilon^o - V^o)}{L^\infty(0,T;L^4(\mathbb T^2))} \\
	& ~~~~ \times \normh{\nablah \mathcal L(\frac{t}{\varepsilon})  V_\varepsilon^o}{L^\infty(0,T;L^4(\mathbb T^2))} \lesssim \normh{V_\varepsilon^o - V^o}{L^\infty(0,T;H^1(\mathbb T^2))} \\
	& ~~~~ \times \normh{V_\varepsilon^o}{L^\infty(0,T;H^2(\mathbb T^2))} \rightarrow 0, \\
	& \normh{\mathcal Q_{\varepsilon,2}(V^o, V_\varepsilon^o - V^o)}{L^\infty(0,T;L^2(\mathbb T^2))} \lesssim \normh{V^o}{L^\infty(0,T;H^2(\mathbb T^2))} \\
	& ~~~~ \times \normh{V_\varepsilon^o - V^o}{L^\infty(0,T;H^1(\mathbb T^2))} \rightarrow 0, \\
	& \normh{\mathcal Q_{\varepsilon,3}(g_\varepsilon- g^o, V_\varepsilon^o )}{L^\infty(0,T;L^2(\mathbb T^2))} \lesssim \normh{g_\varepsilon-g^o}{L^\infty(0,T)}\\
	& ~~~~ \times \normh{ V_\varepsilon^o}{L^\infty(0,T;H^1(\mathbb T^2))} \rightarrow 0,\\
	& \normh{\mathcal Q_{\varepsilon,3}(g^o, V_\varepsilon^o-V^o)}{L^\infty(0,T;L^2(\mathbb T^2))} \lesssim \normh{g^o}{L^\infty(0,T)} \\
	& ~~~~ \times \normh{V_\varepsilon^o-V^o}{L^\infty(0,T;H^1(\mathbb T^2))}  \rightarrow 0,
\end{align*}
where we have applied \eqref{limit:007}, \eqref{aw-per:lim-average}, \eqref{aw-per:norm-preserving}, \eqref{aw-per:limit-of-osc-regulartity}, \eqref{aw-per:limit-of-oscillation-variable}. On the other hand, the norm preserving property \eqref{aw-per:norm-preserve} implies that, for $ t \in (0,\infty) $ and any $ V_1, V_2 \in  (\mathcal D'(\mathbb T^2))^3 $,
\begin{gather*}
	0 = \normh{V_1 + \mathcal L(t) V_2}{L^2(\mathbb T^2)}^2 -  \normh{\mathcal L(- t)V_1 +  V_2}{L^2(\mathbb T^2)}^2\\
	 = 2 \int_{\mathbb T^2} V_1 \cdot \mathcal L(t) V_2 \idxh + 2 \int_{\mathbb T^2} \mathcal L(-t) V_1 \cdot V_2 \idxh.
\end{gather*}
Consider any $ \psi \in (\mathcal D'(\mathbb T^2))^3 $. By making use of the above relation, we have
\begin{align*}
	& \int_{\mathbb T^2} \mathcal A_\varepsilon(D)(V_\varepsilon^o - V^o) \cdot \psi \idxh \\
	& = - \int_{\mathbb T^2} \biggl( \begin{array}{c}
		0\\
		c_1(\mu \deltah \mathcal L_2(\dfrac{t}{\varepsilon})(V_\varepsilon^o - V^o) + \lambda \nablah \dvh \mathcal L_2(\dfrac{t}{\varepsilon})(V_\varepsilon^o - V^o))
	\end{array} \biggr) \cdot \mathcal L(\dfrac{t}{\varepsilon}) \psi \idxh \\
	& = c_1 \int_{\mathbb T^2} \biggl( \mu \nablah \mathcal L_2 (\dfrac t \varepsilon ) (V_\varepsilon^o - V^o) \cdot \nablah \mathcal L_2 (\dfrac t \varepsilon) \psi \\
	& ~~~~ + \lambda (\dvh \mathcal L_2 (\dfrac t \varepsilon ) (V_\varepsilon^o - V^o) ) \times ( \dvh \mathcal L_2 (\dfrac t \varepsilon) \psi ) \biggr) \idxh \lesssim \normh{\nablah \psi}{H^1(\mathbb T^2)} \\
	& ~~~~ \times \normh{V_\varepsilon^o - V^o}{H^1(\mathbb T^2)} \rightarrow 0 ~~~~ \text{in} ~ L^\infty(0,T), ~ \text{as} ~ \varepsilon \rightarrow 0^+,
\end{align*}
where we have applied the H\"older and Sobolev embedding inequalities, and \eqref{aw-per:limit-of-oscillation-variable}. Consequently, $ \mathcal A_\varepsilon(V_\varepsilon^o - V^o) \rightarrow 0 $ as $ \varepsilon \rightarrow 0^+ $ in the sense of distribution.

What is left is to show the convergence of the last term on the right-hand side of \eqref{aw-per:equation-new-fashion}. That is, we will show in the following, as $ \varepsilon \rightarrow 0^+ $, that
\begin{equation}\label{aw-lim:source}
	\mathcal F : = \mathcal L\bigl( - \dfrac t \varepsilon \bigr) \biggl( \begin{array}
		{c} 0 \\ K + L_1 + L_2 \end{array}\biggr) \rightarrow 0, ~~~~ \text{in the sense of distribution}.
\end{equation}
On the one hand, the estimates in \eqref{aw-per:bound-of-source} and the norm-preserving property \eqref{aw-per:norm-preserve} yield that,
\begin{equation*}
	 \mathcal L\bigl( - \dfrac t \varepsilon \bigr) \biggl( \begin{array}
		{c} 0 \\  L_1 + L_2 \end{array}\biggr) \rightarrow 0, ~~ \text{in}~ L^\infty(0,T; L^2(\mathbb T^2)) \cap L^2(0,T; H^1(\mathbb T^2)),
\end{equation*}
as $ \varepsilon \rightarrow 0^+ $.
On the other hand,
\begin{align*}
	& K = \mathcal P_\tau\bigl( - v_p \cdot \nablah  v_p - (\dvh \widetilde{v}_p) v_p\bigr) \\
	& ~~~~ + \mathcal P_\tau \bigl( -\mathcal P_\sigma v_\varepsilon \cdot \nablah (\mathcal P_\sigma v_\varepsilon - v_p ) - (\dvh \widetilde{\mathcal P_\sigma v_\varepsilon})( \mathcal P_\sigma v_\varepsilon - v_p ) \\
	& ~~~~ ~~~~  -  (\mathcal P_\sigma v_\varepsilon - v_p ) \cdot \nablah v_p - (\dvh (\widetilde{\mathcal P_\sigma v_\varepsilon} - \widetilde{v}_p )) v_p \bigr)  =: K' + K'',
\end{align*}
where, as the consequence of \eqref{limit:007} and the H\"older and Sobolev embedding inequalities, we have, as $ \varepsilon \rightarrow 0^+ $,
\begin{equation*}
	K'' \rightarrow 0 ~~ \text{in} ~ L^\infty (0,T;L^2(\Omega_h)) \cap L^2(0,T; H^1(\Omega_h)).
\end{equation*}
Therefore, one only has to investigate,
\begin{equation}\label{20July2018-01}
	\mathcal L \bigl( - \dfrac t \varepsilon \bigr) \biggl( \begin{array}
		{c} 0 \\ K'
	\end{array} \biggr) = \sum_{\mathbf k \in 2\pi \mathbb T^2\setminus\lbrace (0,0) \rbrace} \bigl( \hat{ K'}_{\mathbf k}^+ V_{\mathbf k}^+ e^{- i\varsigma \sg (\mathbf k) \abs{\mathbf k}{}\frac t \varepsilon} + \hat{K'}_{\mathbf k}^- V_{\mathbf k}^- e^{ i\varsigma \sg (\mathbf k) \abs{\mathbf k}{}\frac t \varepsilon}\bigr),
\end{equation}
where we have substituted the representation \eqref{20July2018}, and it is represented, using the relation \eqref{aw-per:projection} and \eqref{aw-per:projection-01},
\begin{equation*}
	\biggl( \begin{array}
		{c} 0 \\ K'
	\end{array} \biggr) = \sum_{\mathbf k \in 2\pi \mathbb T^2\setminus\lbrace (0,0) \rbrace} \bigl( \hat{K'}_{\mathbf k}^+ V_{\mathbf k}^+ + \hat{K'}_{\mathbf k}^- V_{\mathbf k}^- \bigr).
\end{equation*}
Together with the norm preserving property \eqref{aw-per:norm-preserve} and the fact that
\begin{equation*}
	\normh{K'}{L^\infty(0,T;H^1(\mathbb T^2))} \leq \norm{v_p}{L^\infty(0,T;H^2(\mathbb T^2 \times 2 \mathbb T))}^2 < \infty,
\end{equation*}
\eqref{20July2018-01} yields, as $ \varepsilon \rightarrow 0^+ $,
\begin{equation*}
	\mathcal L \bigl( - \dfrac t \varepsilon \bigr) \biggl( \begin{array}
		{c} 0 \\ K'
	\end{array} \biggr) \buildrel\ast\over\rightharpoonup 0 ~~ \text{weak-$\ast$ in} ~ L^\infty(0,T; H^1(\mathbb T^2)).
\end{equation*}
This finishes the proof of  \eqref{aw-lim:source}. Therefore, we have identified the limit equation of \eqref{aw-per:cm-oscillation}:
\begin{equation}\label{aw-per:limit-eq-osc}
	\dt V^o + \mathcal Q_1(v_p, V^o) + \mathcal Q_2(V^o, V^o) + \mathcal Q_3(g^o, V^o)  - \mathcal A(D) V^o = 0.
\end{equation}

To identity the limit equation of \eqref{aw-per:average},
notice that equation \eqref{aw-per:average} can be written as,
\begin{align*}
	& \dfrac{d}{dt} g_\varepsilon = - \int_{\mathbb T^2} \mathcal L_2 \bigl( \dfrac t \varepsilon \bigr) V^o \cdot \nablah \mathcal L_1\bigl( \dfrac t \varepsilon \bigr) V^o \idxh \\
	& ~~~~ ~~~~ + \int_{\mathbb T^2} \mathcal L_2 \bigl( \dfrac t \varepsilon \bigr) (V^o - V_\varepsilon^o ) \cdot \nablah \mathcal L_1 \bigl( \dfrac t \varepsilon \bigr) V^o \idxh \\
	& ~~~~ ~~~~ + \int_{\mathbb T^2} \mathcal L_2 \bigl( \dfrac t \varepsilon \bigr) V_\varepsilon^o \cdot \nablah \mathcal L_1 \bigl( \dfrac t \varepsilon \bigr) ( V^o - V_\varepsilon^o ) \idxh := M_1+ M_2 + M_3.
\end{align*}
Due to the norm preserving property of $ \mathcal L $, we have $ M_2 + M_3 \rightarrow 0 $ as $ \varepsilon \rightarrow 0^+ $ from \eqref{aw-per:norm-preserving}, \eqref{aw-per:limit-of-osc-regulartity} and \eqref{aw-per:limit-of-oscillation-variable}. To investigate $ M_1 $, since $ V^o $ is real-valued, we denote, with $ \hat{V}_{- \mathbf k}^\pm = \overline{\hat{V}_{\mathbf k}^\pm}^c $
\begin{equation}\label{aw-lim:expansion-of-oscillation}
	V^o = \sum_{\mathbf k\in 2\pi \mathbb T^2\setminus\lbrace (0,0)\rbrace} \bigl( \hat{V}_{\mathbf k}^+ V_{\mathbf k}^+ + \hat{V}_{\mathbf k}^- V_{\mathbf k}^-\bigr) .
\end{equation}
Then after applying \eqref{20July2018}, direct calculations show that
\begin{align*}
	& M_1 
	= \sum_{\mathbf k \in 2\pi \mathbb T^2 \setminus \lbrace (0,0) \rbrace} \biggl( - i \hat{V}_{\mathbf k}^+ \hat{V}_{ - \mathbf k}^+ \dfrac{c\sqrt{\gamma-1} \sg (\mathbf k) \abs{\mathbf k}{}}{\gamma-1+c^2} \\
	& ~~~~ ~~~~ + i \hat{V}_{\mathbf k}^- \hat{V}_{- \mathbf k}^- \dfrac{c\sqrt{\gamma-1} \sg (\mathbf k) \abs{\mathbf k}{}}{\gamma-1+c^2} \\
	& ~~~~ ~~~~ - 2 i \hat{V}_{\mathbf k}^+ \hat{V}_{- \mathbf k}^- \dfrac{c\sqrt{\gamma-1} \sg (\mathbf k) \abs{\mathbf k}{}}{\gamma-1+c^2}e^{2 i \varsigma \sg(\mathbf k) \abs{\mathbf k}{}\frac{t}{\varepsilon}} \biggr)\\
	& ~~~~ \buildrel\ast\over\rightharpoonup \dfrac{c\sqrt{\gamma-1}}{\gamma-1+c^2}  \sum_{\mathbf k\in 2\pi \mathbb T^2 \setminus\lbrace(0,0)\rbrace } \biggl(- i \hat{V}_{\mathbf k}^+ \hat{V}_{ - \mathbf k}^+  \sg (\mathbf k) \abs{\mathbf k}{} + i \hat{V}_{\mathbf k}^- \hat{V}_{- \mathbf k}^- \sg (\mathbf k) \abs{\mathbf k}{} \biggr)\\
	& =  i \dfrac{c\sqrt{\gamma-1}}{\gamma-1+c^2}  \sum_{\mathbf k\in 2\pi \mathbb T^2 \setminus\lbrace(0,0)\rbrace } \biggl(- \abs{\hat{V}_{\mathbf k}^+}{2}  +  \abs{\hat{V}_{\mathbf k}^-}{2} \biggr) \sg (\mathbf k) \abs{\mathbf k}{} \\
	&  \text{in} ~ L^\infty(0,T).
\end{align*}
Moreover, direct integrating \eqref{aw-per:average} in the temporal variable yields
\begin{equation*}
	g_\varepsilon(t) - \int_{\mathbb T^2} \xi_0 \idxh = \int_0^t M_1 \,dt +  \int_0^t ( M_2 + M_3 ) \,dt.
\end{equation*}
After taking $ \varepsilon \rightarrow 0 $, we have for any $ t \in [0,T] $,
\begin{equation*}
\begin{aligned}
	& g^o(t) - \int_{\mathbb T^2} \xi_0 \idxh \\
	& = i \dfrac{c\sqrt{\gamma-1}}{\gamma-1+c^2}  \sum_{\mathbf k\in 2\pi \mathbb T^2 \setminus\lbrace(0,0)\rbrace } \biggl(- \abs{\hat{V}_{\mathbf k}^+}{2}  +  \abs{\hat{V}_{\mathbf k}^-}{2} \biggr) \sg (\mathbf k) \abs{\mathbf k}{},
\end{aligned}
\end{equation*}
and since $ g^o $ is real-valued, this implies
\begin{equation}\label{aw-lim:average}
	g^o \equiv \int_{\mathbb T^2} \xi_0 \idxh.
\end{equation}

 We summarize the discussion in this subsection in the following:
 \begin{prop}\label{prop:convergence-periodic} Under the same assumptions as in Proposition \ref{prop:convergence-eq-distribution}, in the case when $ \Omega_h = \mathbb T^2 $, there are a function $ V^o $ satisfying the regularity in \eqref{aw-per:limit-of-osc-regulartity} and a constant $ g^o $ given by \eqref{aw-lim:average}, such that $ (V^o, g^o) $ satisfies equation \eqref{aw-per:limit-eq-osc}. The convergence in \eqref{aw-per:strong-cnvgn-1} holds as $ \varepsilon \rightarrow 0^+ $.
 \end{prop}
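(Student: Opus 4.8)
The proof is a synthesis of the constructions carried out in this subsection; the plan is to organize them as follows. \textbf{Step 1 (compactness of the filtered oscillations).} Starting from the oscillation system \eqref{aw-per:oscillation}, I pass to the filtered variable $V_\varepsilon^o = \mathcal L(-t/\varepsilon)\mathcal U_\varepsilon^o$ and the mean $g_\varepsilon = \int_{\mathbb T^2}\xi_\varepsilon\idxh$, which satisfy \eqref{aw-per:cm-oscillation} and \eqref{aw-per:average}. Since $L$ is anti-symmetric with respect to $A=\mathrm{diag}(c^2,\gamma-1,\gamma-1)$ and commutes with $\partial_h$, the solution group $\mathcal L(t)$ is an isometry on every $H^s(\mathbb T^2)$; hence the uniform bounds \eqref{ue:total} and \eqref{aw:regularity} pass through $\mathcal L$ to give \eqref{aw-per:norm-preserving}, and together with the operator bounds \eqref{aw-per:bounds-of-operators}--\eqref{aw-per:bound-of-source} equation \eqref{aw-per:cm-oscillation} yields a uniform bound on $\partial_t V_\varepsilon^o$ in $L^\infty(0,T;L^2)\cap L^2(0,T;H^1)$. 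The Aubin--Lions lemma then extracts $V^o$ with the regularity \eqref{aw-per:limit-of-osc-regulartity} and the convergences \eqref{aw-per:limit-of-oscillation-variable}, while Arzel\`a--Ascoli applied to $g_\varepsilon$, bounded in $C^1([0,T])$ by \eqref{aw-per:bounds-of-mass-average}, gives $g_\varepsilon\to g^o$ uniformly.

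\textbf{Step 2 (identification of the limit operators).} I diagonalize $L$ on $(\ker L)^\perp$ via the eigenfunctions $V_{\mathbf k}^\pm$ with eigenvalues $\mp i\varsigma\,\sg(\mathbf k)\abs{\mathbf k}{}$, so $\mathcal L(s)$ multiplies the $V_{\mathbf k}^\pm$-component by the phase $e^{\pm i\varsigma\,\sg(\mathbf k)\abs{\mathbf k}{}s}$. Expanding $\mathcal A_\varepsilon(D)$, $\mathcal Q_{\varepsilon,1}$, $\mathcal Q_{\varepsilon,2}$, $\mathcal Q_{\varepsilon,3}$ on this basis as above, each term carries a phase $e^{i(\cdots)t/\varepsilon}$; as $\varepsilon\to0^+$ the non-resonant phases average to zero in $\mathcal D'$ and only the resonant contributions survive, which are exactly the colinear, sign-matched configurations recorded after \eqref{aw-lim:op05}. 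This produces the limit operators of \eqref{aw-lim:op01}--\eqref{aw-lim:op05}. To make the distributional passage rigorous I rewrite the $V_\varepsilon^o$-equation in the frozen-coefficient form \eqref{aw-per:equation-new-fashion}, replace $v_p$ and $V^o$ by finite Fourier truncations, control the tails uniformly in $\varepsilon$ by the operator bounds \eqref{aw-per:bounds-of-operators}, carry out the oscillatory averaging on the finitely many surviving modes, and then remove the truncation; the left-hand side of \eqref{aw-per:equation-new-fashion} then converges in $\mathcal D'$ to $\partial_t V^o + \mathcal Q_1(v_p,V^o)+\mathcal Q_2(V^o,V^o)+\mathcal Q_3(g^o,V^o)-\mathcal A(D)V^o$.

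\textbf{Step 3 (vanishing of the remainder).} The right-hand side of \eqref{aw-per:equation-new-fashion} converges to $0$ in $\mathcal D'$: the differences $\mathcal Q_{\varepsilon,i}(\,\cdot-\cdot,\cdot\,)$ vanish by the strong convergences \eqref{limit:007}, \eqref{aw-per:lim-average}, \eqref{aw-per:limit-of-oscillation-variable} and norm preservation; the $\mathcal A_\varepsilon(D)$-difference vanishes after integrating by parts against a test function and using the identity $\int V_1\cdot\mathcal L(t)V_2\idxh = -\int \mathcal L(-t)V_1\cdot V_2\idxh$; $L_1,L_2$ are $O(\varepsilon)$ by \eqref{aw-per:bound-of-source}; and $K=K'+K''$ with $K''\to0$ strongly by \eqref{limit:007}, while $K'=\mathcal P_\tau(-v_p\cdot\nablah v_p-(\dvh\widetilde{v}_p)v_p)$ is $z$-independent and bounded in $L^\infty(0,T;H^1(\mathbb T^2))$, so after filtering it is the pure oscillatory sum $\sum_{\mathbf k}(\hat{K'}_{\mathbf k}^+ V_{\mathbf k}^+ e^{-i\varsigma\sg(\mathbf k)\abs{\mathbf k}{}t/\varepsilon}+\hat{K'}_{\mathbf k}^- V_{\mathbf k}^- e^{i\varsigma\sg(\mathbf k)\abs{\mathbf k}{}t/\varepsilon})$, which converges weak-$*$ to $0$. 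This establishes \eqref{aw-per:limit-eq-osc}. For $g^o$, I split $dg_\varepsilon/dt=M_1+M_2+M_3$ as above, send $M_2,M_3\to0$ by norm preservation, compute the weak-$*$ limit of $M_1$ in the eigenbasis (only the $\mathbf k,-\mathbf k$ pairs contribute), observe that this limit is purely imaginary whereas $g^o$ is real, and integrate in time to obtain $g^o\equiv\int_{\mathbb T^2}\xi_0\idxh$, i.e.\ \eqref{aw-lim:average}.

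\textbf{Step 4 (strong convergence and the main obstacle).} Since $\mathcal L$ preserves the $H^1$ norm, $\mathcal U_\varepsilon^o-\mathcal L(t/\varepsilon)V^o=\mathcal L(t/\varepsilon)(V_\varepsilon^o-V^o)\to0$ in $L^\infty(0,T;H^1(\mathbb T^2))$ by \eqref{aw-per:limit-of-oscillation-variable}; combining this with $g_\varepsilon\to g^o$ and the strong convergence $\mathcal P_\sigma v_\varepsilon\to v_p$ of \eqref{limit:007}, and recalling that $\mathcal P_\tau v_\varepsilon$ and $\xi_\varepsilon$ are $z$-independent, controls $v_\varepsilon-v_p-\mathcal L_2(t/\varepsilon)V^o$ and $\xi_\varepsilon-g^o-\mathcal L_1(t/\varepsilon)V^o$ in $L^\infty(0,T;H^1(\mathbb T^2\times 2\mathbb T))$, which is \eqref{aw-per:strong-cnvgn-1}. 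I expect the main obstacle to be Step 2: rigorously passing to the distributional limit in the infinite sums defining the filtered quadratic operators, since one must truncate to finitely many Fourier modes, bound the remainders uniformly in $\varepsilon$ via \eqref{aw-per:bounds-of-operators}, identify precisely the resonance sets (the colinearity and matched-sign conditions), perform the oscillatory averaging, and only then remove the truncation.
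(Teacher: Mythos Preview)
Your proposal is correct and follows essentially the same route as the paper: filtering via $\mathcal L(-t/\varepsilon)$, uniform bounds plus Aubin--Lions and Arzel\`a--Ascoli for compactness, the eigenbasis $V_{\mathbf k}^\pm$ and resonance analysis for the limit operators, the frozen-coefficient rewriting \eqref{aw-per:equation-new-fashion} with Fourier truncation to justify the passage, the $K=K'+K''$ splitting, and the purely-imaginary observation to pin down $g^o$. You have identified the genuine technical point (the truncation-and-remainder argument in Step~2) exactly where the paper places it.
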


\paragraph{Acknowledgements}
This work was supported in part by the Einstein Stiftung/Foundation - Berlin, through the Einstein Visiting Fellow Program, and by the John Simon Guggenheim Memorial Foundation. X.L. and E.S.T. would like to thank the Isaac Newton Institute for Mathematical Sciences, Cambridge, for support and hospitality during the programme {\it``Mathematical aspects of turbulence: where do we stand?"} where part of the work on this paper was undertaken.
This work was supported in part by EPSRC grant no EP/R014604/1.
X.L.'s work was partially supported by a grant from the Simons Foundation, during his visit to the Isaac Newton Institute for Mathematical Sciences.

\bibliographystyle{plain}

\end{document}